\numberwithin{equation}{section}
\newcommand{\e}{\varepsilon}
\newcommand{\Pb}{\mathbb{P}}
\newcommand{\E}{\mathbb{E}}
\newcommand{\R}{\mathbb{R}}
\newcommand{\Z}{\mathbb{Z}}
\newcommand{\Id}{\mathbbm{1}}
\newcommand{\Ai}{\mathrm{Ai}}
\renewcommand{\R}{\mathbb{R}}
\newtheorem{prop}{Proposition}[section]
\newtheorem{thm}[prop]{Theorem}
\newtheorem{lem}[prop]{Lemma}
\newtheorem{defin}[prop]{Definition}
\newtheorem{conj}[prop]{Conjecture}
\newtheorem{cla}[prop]{Claim}
\newtheorem{asmp}[prop]{Assumption}
\newtheorem{rem}[prop]{Remark}
\newcommand{\newasep}{\textrm{ASEPsc}}
\DeclareMathOperator*{\argmax}{argmax}
\DeclareMathOperator*{\sargmax}{sargmax}
\title{Quasi-geodesics in integrable and non-integrable exclusion processes}
\author{Patrik L.\ Ferrari\thanks{Institute for Applied Mathematics, Bonn University, Endenicher Allee 60, 53115 Bonn, Germany. E-mail: {\tt ferrari@uni-bonn.de}} \and
	Min Liu\thanks{Institute for Applied Mathematics, Bonn University, Endenicher Allee 60, 53115 Bonn, Germany. E-mail: {\tt mliu@uni-bonn.de}}
}
\date{\today}
\begin{document}
	\maketitle
	\sloppy
	
	\begin{abstract}
	Backwards geodesics for TASEP were introduced in \cite{Fer18}. We consider flat initial conditions and show that under proper scaling its end-point converges to maximizer argument of the Airy$_2$ process minus a parabola. We generalize its definition to generic non-integrable models including ASEP and speed changed ASEP (call it quasi-geodesics). We numerically verify that its end-point is universal, where the scaling coefficients are analytically computed through the KPZ scaling theory.
	\end{abstract}

\section{Introduction}
	
We consider interacting particle systems in the Kardar-Parisi-Zhang (KPZ) universality class, with particle configurations $\eta\in \{0,1\}^\Z$, where $\eta(x)=1$ whenever there is a particle at site $x$. A particle at site $x$ jumps to a free site $y$ with jump rate given by a local function $c_\eta(x,y)$. In this paper we consider three variants of exclusion processes with nearest-neighbor jumps, namely the totally asymmetric simple exclusion process (TASEP), the partially asymmetric simple exclusion process (ASEP), and the speed changed ASEP (ASEPsc). In the latter the function $c_\eta(x,x+1)$ and $c_\eta(x+1,x)$ depend on $\eta(x-1)$ and $\eta(x+2)$ as well.
While TASEP can be analyzed with exact formulas, and partially it is also the case for ASEP, the speed changed ASEP considered here is non-integrable with known stationary measures~\cite{KS92}, allowing us to analytically compute all non-universal coefficients for the KPZ scaling theory~\cite{Spo14,PS01}.

Since jumps are nearest-neighbor, the particle ordering is maintained. We label particles from right to left and call $X_n(t)$ the position of particle $n$ at time $t$. One way to see this model as a growing interface is to consider $n\mapsto X_n(t)+n$ as height function. This is a rotation by 45 degrees of the (more standard) height function whose gradient is given by $1-2\eta(x)$.

As the KPZ universality class has $1:2:3$ scaling, fluctuations of the interface grow as $t^{1/3}$ and space-time correlations are non-trivial in a $t^{2/3}$-neighborhood of characteristic lines. In analogy to last passage percolation (LPP) models (and its universal scaling limit, the directed landscape ${\cal L}$~\cite{DOV22}), one can define a notion of geodesics: for a given $(N,t)$, any trajectory $(X_{N(\tau)}(\tau))_{\tau:t\to 0}$ satisfying the concatenation property
\begin{equation}\label{eq1}
X_N(t)=X_{N(\tau)}(\tau)+X^{{\rm step}}_{N-{N(\tau)}}(\tau,t),
\end{equation}
where $X^{{\rm step}}_{N-{N(\tau)}}(\tau,t)$ is the position of the particle number $N-{N(\tau)}$ of TASEP which starts at time $\tau$ with particles occupying all sites to the left of $X_{N(\tau)}(\tau)$. The reason we call it (backwards) geodesics, is in~\cite{Sep98c} it was shown that
\begin{equation}\label{eq2}
X_N(t)=\min_{M\leq N}\left\{X_{M}(\tau)+X^{{\rm step}}_{N-M}(\tau,t)\right\},
\end{equation}
so along these paths the minimization \eqref{eq2} is fulfilled.

Geodesics have attracted a lot of interest and many properties have been studied, mostly in the framework of LPP (see e.g.~\cite{BF20,BSS19,Zha20,Pim16,BBS20,BGHH20,BBS20b,BHS18,JRS23}), but also in the scaling limit of the directed landscape (see e.g.~\cite{Bus24,BGH22,Dau23,DSV22,RV23}). Properties of geodesics, in particular of localization, have also been very important to study other observables, like the decay of the covariance in the Airy$_1$ process~\cite{BBF22}, the universality of first order correction of the time-time covariance~\cite{FO18,FO22,BG18,BGZ19}, the analysis of mixing times~\cite{SS22}, but also the height function in presence of shocks without passing through maps to LPP~\cite{BF22,FN19,BBF21,FG24,FN23}.

In~\cite{Fer18} a dynamical construction of a backwards geodesic was proposed. In short, the geodesic follows backwards the trajectory of a particle (starting with particle $N$ at time $t$) and every time the particle on the geodesics is prevented from jumping, it follows the backwards trajectory of the blocking particle. This construction is natural since it keeps track of which regions in space-time actually influenced the position of particle $N$ at time $t$, $X_N(t)$. In particular, if we know that the backwards geodesics is in a given space-time non-random region $D$ (with high probability), then the position $X_N(t)$ is independent of the randomness outside the region $D$ (see Lemma~3.1 of~\cite{FG24} for an explicit statement).

The main goal of this work is to investigate whether the generalization of the dynamical construction of backwards geodesics applied to non- or partially-integrable systems such as speed changed ASEP or ASEP is universal. For these models \eqref{eq1} is no longer satisfied, but as proven for ASEP~\cite{QS20} the height functions converge in the scaling limit to the KPZ fixed point, for which
\begin{equation}
h(x,t)=\max_{y\in\R} \{h(y,0)+{\cal L}(y,0;x,t)\}.
\end{equation}
By universality we expect the same to be true for speed changed ASEP and we call our physically motivated generalization \emph{quasi-geodesics}.

For our study we consider flat initial condition (with density $1/2$), that is, $X_N(0)=-2N$ for $N\in\Z$. First of all, we prove for TASEP in Theorem~\ref{uhat} that the end-point of the backwards geodesics, $x_{N(0)}(0)$, in the $t^{2/3}$ scale, has a limit law given by the $\argmax_{u\in\R}\{{\cal A}_2(u)-u^2\}$, where ${\cal A}_2$ is the Airy$_2$ process~\cite{PS02}. The analogue result for line-to-point exponential LPP can be found in~\cite{SCH12,BKS12}.

To verify the universality conjecture, we first need to derive precise conjectures based on the scaling from the KPZ scaling theory, see Section~\ref{kpz}: all the non-universal model-dependent parameters are computed analytically allowing us to formulate precise conjectures for the end-point of the quasi-geodesics without free parameters, see Conjecture~\ref{asepconj} for ASEP and Conjecture~\ref{conj} for speed changed ASEP. In Section~\ref{sectSimulation} we provide numerical evidence that these conjectures hold true.

For the speed changed ASEP, we also verify that with flat initial condition the one-point distribution is asymptotically given by the GOE Tracy-Widom distribution as expected by universality, see Section~\ref{asepscpp}.

Finally, as mentioned above, for ASEP it is known that \eqref{eq2} is only an inequality (see e.g.~\cite{BF22,FN19}). From the results of~\cite{QS20} it follows that the difference
\begin{equation}\label{eq3}
\min_{M\leq N}\left\{X_{M}(\tau)+X^{{\rm step}}_{N-M}(\tau,t)\right\}-X_N(t) \geq 0,
\end{equation}
once divided by $t^{1/3}$, goes to zero as $t\to\infty$ (see Lemma~\ref{lemScaling}). However the speed of convergence to zero remains unknown. For that reason, using our quasi-geodesic, we numerically study the discrepancy
\begin{equation}
d(t)=X_{N(\tau)}(\tau)+X^{{\rm step}}_{N-{N(\tau)}}(\tau,t) - X_N(t)\geq 0,
\end{equation}
which clearly is a bound for \eqref{eq3}. Surprisingly the numerical simulation indicates that $d(t)$ tends to a finite random variable, without the need to be divided by $t^\theta$ for some $\theta\in (0,1/3)$, see Section~\ref{asepdis}. Thus for ASEP the correction to \eqref{eq2} is of order one. In a recent paper~\cite{ACH24} it is shown that the height function has bounded discrepancy from the maximum of some LPP line ensembles (see Section~4 of~\cite{ACH24}). However, it remains unclear how to relate the entries in the line ensembles with ASEP height functions.

\paragraph{Outline.} We present our results in Section~\ref{sec2}. In Section~\ref{sec21}, we introduce the models ASEP and speed changed ASEP. The generalized backward geodesic is defined in Section~\ref{sec22}. Analytical results for the backward geodesic in TASEP are given in Section~\ref{sec23} and proved later in Section~\ref{pl22}. The exact scaling formulas for non-integrable models are also provided in Section~\ref{sec23}, with a detailed explanation on how to use KPZ scaling theory to derive these results given in Section~\ref{kpz}. Finally, in Section~\ref{sectSimulation}, we present numerical results to verify the conjectures mentioned in previous sections.
	
\paragraph{Acknowledgements:} The work was partly funded by the Deutsche Forschungsgemeinschaft (DFG, German Research Foundation) under Germany’s Excellence Strategy - GZ 2047/1, projekt-id 390685813 and by the Deutsche Forschungsgemeinschaft (DFG, German Research Foundation) - Projektnummer 211504053 - SFB 1060. We authors are grateful to Herbert Spohn for indicating the non-integrable model with computable non-universal scaling coefficients, to Dominik Schmid and Ofer Busani for discussions, as well as Milind Hedge for explaining their approximate LPP representation.

	\section{Model and results}\label{sec2}
	\subsection{Simple exclusion process}\label{sec21}
	The simple exclusion process ($\mathrm{SEP}$) is among the interacting particle systems introduced by Spitzer \cite{Spi70}. It is a Markov process $\eta_t$ on configuration space $\Omega=\{0,1\}^\Z$ that describes the motion of particles on $\Z$, where $\eta_t(i)=1$ (resp.\ $\eta_t(i)=0$) stands for presence (resp.\ absence) of a particle at position $i$ at time $t$. For a configuration $\eta\in\Omega$ and $x,y\in\Z$, we define $\eta^{x,y}\in\Omega$ as
	\begin{equation}
		\eta^{x,y}(i)=\begin{cases}
			\eta(y),\quad&\mathrm{if}\ i=x,\\
			\eta(x),&\mathrm{if}\ i=y,\\
			\eta(i),&\mathrm{otherwise}.
		\end{cases}
	\end{equation}
	The generator of $\mathrm{SEP}$ is given by
	\begin{equation}\label{jumpra}
		\mathcal Lf(\eta)=\sum_{\begin{subarray}{c}x,y\in\Z\\|x-y|=1\end{subarray}}c_\eta(x,y)\eta(x)(1-\eta(y))(f(\eta^{x,y})-f(\eta))
	\end{equation}
	for any cylinder function $f:\Omega\to\R$ and where $c_\eta(x,y)$ is the jump rate from site $x$ to site $y$. In this work, we will consider three special cases:
	
	\emph{(a) TASEP:} The totally asymmetric simple exclusion process (TASEP) has jump rates given by
	\begin{equation}
		c_\eta(x,y)=\begin{cases}
			1,\quad &\mathrm{if}\ y=x+1,\\
			0,&\mathrm{otherwise}.
		\end{cases}
	\end{equation}

	\emph{(b) ASEP:} The asymmetric simple exclusion process (ASEP) with asymmetry parameter $p\in (1/2,1]$ is defined by the jump rate in \eqref{jumpra}
	\begin{equation}\label{aseprate}
		c_\eta(x,y)=\begin{cases}
			p,\quad &\mathrm{if}\ y=x+1,\\
			1-p,&\mathrm{if}\ y=x-1,\\
			0,&\mathrm{otherwise},
		\end{cases}
	\end{equation}
	see Figure~\ref{asepjump} for an illustration.
	
	\begin{figure}[h]
		\centering
		\includegraphics{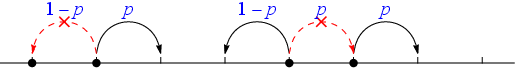}
		\caption{Illustration for jump rate in ASEP. In the setting of ASEP, every particle will attempt to jump its nearest neighbour independently with jump rate $p$ to the right and $1-p$ to the left. The attempted jump is successful only if the target site is unoccupied. }\label{asepjump}
	\end{figure}

	\emph{(c) Speed changed ASEP:} Speed changed ASEP (\newasep) are models where the jump rate is not constant, but it depends on the local particle configuration. We consider here a model with next-nearest-neighbour interactions as in~\cite{KS92}. It is still in the family of SEP since the jumps are nearest-neighbour, that is, for $|i|\geq 2$, $c_\eta(x,x+i)=0$. The jump rates from $x$ to $x+1$ and $x-1$ are given as follows. Let $\alpha_i,\gamma_i\geq 0$ with $i\in\{2,3,4\}$ be fixed parameters. Then the right jumps occur with rate
	\begin{equation}\label{jp1}
		\begin{aligned}
			c_\eta(j,j+1)=\begin{cases}
				\alpha_2,\quad&\mathrm{if}\ \eta(j-1)=0,\eta(j)=1,\eta(j+1)=0,\eta(j+2)=1,\\
				\alpha_3,\quad&\mathrm{if}\ \eta(j-1)=0,\eta(j)=1,\eta(j+1)=0,\eta(j+2)=0,\\
				\alpha_3,\quad&\mathrm{if}\ \eta(j-1)=1,\eta(j)=1,\eta(j+1)=0,\eta(j+2)=1,\\
				\alpha_4,\quad&\mathrm{if}\ \eta(j-1)=1,\eta(j)=1,\eta(j+1)=0,\eta(j+2)=0,\\
			\end{cases}
		\end{aligned}
	\end{equation}
	while the left jumps have rates
	\begin{equation}\label{jp2}
		\begin{aligned}
			c_\eta(j+1,j)=\begin{cases}
				\gamma_2,\quad&\mathrm{if}\ \eta(j-1)=0,\eta(j)=0,\eta(j+1)=1,\eta(j+2)=1,\\
				\gamma_3,\quad&\mathrm{if}\ \eta(j-1)=0,\eta(j)=0,\eta(j+1)=1,\eta(j+2)=0,\\
				\gamma_3,\quad&\mathrm{if}\ \eta(j-1)=1,\eta(j)=0,\eta(j+1)=1,\eta(j+2)=1,\\
				\gamma_4,\quad&\mathrm{if}\ \eta(j-1)=1,\eta(j)=0,\eta(j+1)=1,\eta(j+2)=0,\\
			\end{cases}
		\end{aligned}
	\end{equation}
	see Figure~\ref{jumprate} for an illustration.
	\begin{figure}[h]
		\centering
		\includegraphics{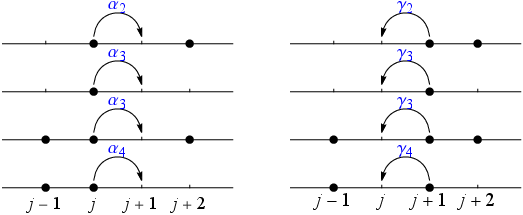}
		\caption{Illustration for jump rate of process \newasep. The left figure is for right jump rates \eqref{jp1}, while the right figure is for left jump rates \eqref{jp2}.}\label{jumprate}
	\end{figure}
	
	For TASEP and ASEP many analytic results have been obtained, many of them due to the presence of some integrable structure. However not every observable one is easily accessible by exact formulas. The reason why we have chosen \newasep, is that, on the one hand it does not have an integrable structure as in (T)ASEP allowing an exact asymptotic analysis, on the other hand the stationary measure is known and with it we can compute all the model-dependent coefficients which enters in the KPZ scaling theory. As a consequence, we do not have any free parameter to be numerically fitted in order to verify the conjectures.
	
	Due to the exclusion principle and nearest-neighbour jumps, the order of the particles remains unchanged. Therefore we can describe the system of particles by labeling them. Denote by $X_n(t)$ the position at time $t$ of particle $n$. Then we use the right-to-left convention, namely $X_{n+1}(t)<X_n(t)$ for any $n$ and $t$. Furthermore, we choose the labeling of the initial condition such that the particle with label $0$ is the right-most starting to the left of the origin, that is,
	\begin{equation}
		\cdots<X_1(0)<X_0(0)\leq 0<X_{-1}(0)<\cdots.
	\end{equation}
	We denote by $X(t)=(X_n(t))_{n\in\Z}$ the particles position process.
	
	\subsection{Backwards geodesic and index process}\label{sec22}
	Backwards geodesic and index process in the TASEP setting were introduced in \cite{Fer18}, see also \cite{FN19}. For TASEP they have many properties similar to geodesics in the last passage percolation models. Physically they track the space-time locations where the randomness is relevant for the position of the particles at time $t$.
	We extend its definition also to generic SEP, which includes ASEP and \newasep, and numerically investigate some (conjecturally universal) statistics.
	\begin{defin}[Backwards geodesic and index process]\label{dbwp}
		For any fixed $N\in\Z$ and $t\geq 0$, we call $(N(t\downarrow s))_{s\in [0,t]}$ as the \textbf{backwards index process} of particle $N$ starting from time $t$ and running backwards in time. It is defined as follows\footnote{In the notation, we do not explicitly write the dependence on $N$.}:
		\begin{enumerate}
			\item At time $t$, we set $N(t\downarrow t)=N$.
			\item The label changes at time $s\in [0,t)$ if the following occur:
			\begin{enumerate}
				\item there exists a suppressed right to left jump of particle $X_{N(t\downarrow s^+)}$, then we set $N(t\downarrow s)=N(t\downarrow s^+)-1$,
				\item there exists a suppressed left to right jump of particle $X_{N(t\downarrow s^+)}$, then we set $N(t\downarrow s)=N(t\downarrow s^+)+1$.
			\end{enumerate}
		\end{enumerate}
		The trajectory $(X_{N(t\downarrow s)})_{s\in[0,t]}$ is called the \textbf{backwards geodesic} of particle $N$ starting from time $t$. $N(t\downarrow 0)$ is called the \textbf{end-point of the backwards geodesic}.
	\end{defin}
	\begin{figure}[h]
		\centering
		\includegraphics{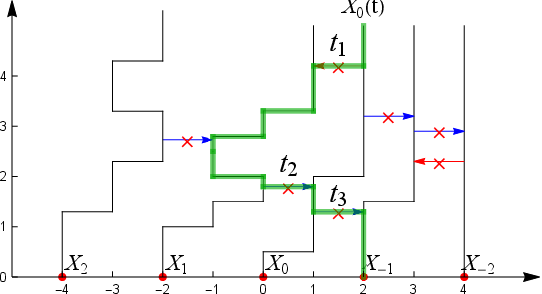}
		\caption{Illustration for backwards geodesic in ASEP setting for the index $N=0$. From left to right, the black solid lines are the space-time trajectory of particles $X_{2}$, $X_{1}$, $X_0,\ X_{-1}$ and $X_{-2}$ respectively. The arrow with a cross stands for a suppressed jump. The green line is the trajectory of backwards geodesic of particle $X_{0}$. $t_1,t_2$ and $t_3$ are the times when $N(t\downarrow s)$ changes its value, for instance, at time $t_2$, since particle $X_1$ has a right suppressed jump, we set $N(t\downarrow t_2)=N(t\downarrow t_2^+)-1$.
		}
	\end{figure}

	\paragraph{Construction of ASEP.} To state the properties of backwards index process, we recall the following construction of ASEP \cite{FG24}, on which Poisson clocks are assigned to particle labels. We have a family of independent Poisson processes $\{T_m^+,T_m^-\}_{m\in\Z}$ where $T_m^+$ has parameter $p$ and $T_m^-$ has parameter $1-p$. The points of the Poisson process $T_m^+$ (resp.\ $T_m^-$) are the time when a particle with label $m$ attempts to jump to its right (resp.\ left), and the jump is successful provided the arrival site is empty. By the independence of the Poisson processes, almost surely, there are no simultaneous jump trials. Moreover, for each fixed $t\geq 0$, almost surely there exist infinitely many positive and negative integers $m$ such that there exist no Poisson points of $T_m^+$ and $T_m^-$ in the time interval $[0,t]$ so that the construction is well-defined (in the same way Harris graphical construction for the basic coupling is well-defined~\cite{Har72,Har78,Li99}).	
	
	We couple two processes $X(t)$ and $Y(t)$ with different initial conditions $X(0)$ and $Y(0)$ such that particle $X_i$ and $Y_i$ use the same clocks $(T_i^+,T_i^-)$ for all $i\in\Z$. This is called the \textbf{clock coupling}~\cite{FG24}. For two configurations $X,\ Y$, we define a partial order $X\preceq Y$ if $X(i)\leq Y(i)$ for all $i\in\Z$. As for the basic coupling, the clock coupling also preserves the partial ordering.
	\begin{lem}[Lemma~3.9 of~\cite{FG24}]\label{attract}
		Under clock coupling, if $X(0)\preceq Y(0)$, then we have $X(s)\preceq Y(s)$ for all $s\geq 0$ almost surely.
	\end{lem}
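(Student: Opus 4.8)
The plan is to mimic the classical proof that the basic coupling makes ASEP attractive, adapted to the fact that here the Poisson clocks are attached to particle \emph{labels} rather than to sites. The crucial simplification this brings is that a single clock ring at a time $s$ moves at most the pair $(X_m,Y_m)$ for one label $m$, so for every $i\neq m$ the inequality $X_i(s^-)\le Y_i(s^-)$ is trivially carried over to time $s$, and only the coordinate $m$ has to be checked.

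First I would turn the statement ``for all $s\ge 0$, a.s.'' into a finite induction. Fix $T>0$. By the same argument that makes the graphical construction of Section~\ref{sec22} well defined, a.s.\ there is a bi-infinite increasing sequence of labels $(m_k)_{k\in\Z}$ with $m_k\to\pm\infty$ such that particle $m_k$ has no Poisson point of $T_{m_k}^{\pm}$ in $[0,T]$; since $X$ and $Y$ use the same clocks, for both processes these particles stay frozen throughout $[0,T]$. Because jumps are nearest-neighbour in the label, on $[0,T]$ the particles with labels in the block $B_k:=\{m_k,m_k+1,\dots,m_{k+1}\}$ evolve as a finite exclusion system with the two frozen boundary particles $m_k,m_{k+1}$, driven solely by the finitely many clocks $\{T_j^{\pm}:m_k<j<m_{k+1}\}$; in particular only finitely many clock rings affect $B_k$ during $[0,T]$. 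It then suffices to show, for each $k$, that $X_j(s)\le Y_j(s)$ for all $j\in B_k$ and $s\in[0,T]$, and to take the union over $k$ and over $T\in\N$.

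Within a block I would induct on the finitely many ordered ring times $0<s_1<\dots<s_\ell<T$ that affect $B_k$. The base case is the hypothesis $X(0)\preceq Y(0)$, which orders the two frozen endpoints and all interior particles at time $0$. For the inductive step, assume $X_j(s_r^-)\le Y_j(s_r^-)$ for all $j\in B_k$ and let clock $T_m^{\pm}$ ring at $s_r$ with $m_k<m<m_{k+1}$. Set $z_X=X_m(s_r^-)$, $z_Y=Y_m(s_r^-)$. There is nothing to do unless exactly one of $X_m,Y_m$ jumps, and if $z_X<z_Y$ a one-line inspection of the relevant sub-cases gives $X_m(s_r)\le Y_m(s_r)$. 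The only delicate case is $z_X=z_Y$: for a right-clock ring, if $X_m$ jumps but $Y_m$ does not, then the exclusion rule forces $X_{m-1}(s_r^-)\ge z_X+2$ while $Y_{m-1}(s_r^-)=z_Y+1=z_X+1$, contradicting $X_{m-1}(s_r^-)\le Y_{m-1}(s_r^-)$ (here $m-1\in B_k$, so this inequality is available from the inductive hypothesis, or from $X(0)\preceq Y(0)$ if $m-1=m_k$); hence only ``both jump'' or ``$Y_m$ jumps, $X_m$ does not'' can occur, and both preserve the order. The left-clock ring is symmetric, using the label-$(m+1)$ inequality and the permanent ordering $X_{m+1}<X_m<X_{m-1}$. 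This closes the induction and proves the lemma.

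The only genuinely non-routine ingredient is the reduction step: one has to invoke the well-definedness of the clock-coupled graphical construction and observe that the frozen-label decomposition may be chosen simultaneously for $X$ and $Y$ — which is automatic because they share the clocks. Everything afterwards is the standard coupling case check, the single subtle point being the ``forbidden configuration'' argument above, which uses only the initial ordering, the exclusion constraint (the site in front of particle $m$ is occupied precisely when the neighbouring label sits there), and the permanent ordering of the labels.
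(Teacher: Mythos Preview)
The paper does not give its own proof of this lemma; it is stated with attribution to Lemma~3.9 of~\cite{FG24} and used as a black box. Your argument is correct and is precisely the standard attractiveness proof for exclusion processes, adapted to label-attached clocks: the frozen-label decomposition (available a.s.\ by the very argument the paper invokes for well-definedness of the construction) reduces to finite blocks with finitely many rings, and your single-coordinate case check is the right one --- the only case that could break the order, $z_X=z_Y$ with exactly one of the pair jumping, is ruled out by the exclusion constraint together with the inductive inequality on the neighbouring label. This is essentially what any proof of this fact must look like, and presumably what~\cite{FG24} does as well.
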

	
	Next, we define a process, with step initial condition, coupled with $X$, in such a way that particle $n-M$ uses the same clocks as particle $n$ of the process $X$. To keep track of the coupling with $X$ and the shift of the clocks by $M$, we will call the new process $X^{\mathrm{step},X,M}$.
	\begin{defin}\label{comp}
		Let $X(t)$ be ASEP and $M\in\Z$ be fixed. We define the process $X^{{\rm step},X,M}(t)$ as follows:
		\begin{enumerate}
			\item Initial condition:
			\begin{equation}\label{ini}
				X^{\mathrm{step},X,M}_n(0)=\begin{cases}
					-n,\quad&\mathrm{if}\ n\geq 0,\\
					\infty,&\mathrm{otherwise}.
				\end{cases}
			\end{equation}
			\item Time evolution: for each $i\in\Z$, we let $X^{\mathrm{step},X,M}_{i-M}(t)$ and $X_i(t)$ use the same Poisson processes $(T_i^+,T_i^-)$.
		\end{enumerate}
	\end{defin}
For TASEP, it was shown in~\cite{Fer18,Sep98c} that
	\begin{equation}\label{tasep}
		X_N(t)=\min_{M\leq N}\left\{X_{N-M}^{\mathrm{step},X,M}(t)+X_M(0)\right\}=X_{N-N(t\downarrow0)}^{\mathrm{step},{X,{N(t\downarrow0)}}}(t)+X_{N(t\downarrow0)}(0)
	\end{equation}
	almost surely for all $N\in\Z$ and $t\geq 0$.
	\begin{rem}
		TASEP was known to satisfy the skew-time reversibility \cite{MQR17}, which states:
		\begin{equation}
			\begin{aligned}
				&\Pb\left(X_n(t)\geq s\right)=\Pb\left(\hat X_{n}(t)\leq X_n(0), \hat X_{n-1}(t)\leq X_{n-1}(0), \ldots, \hat X_{n-m}(t)\leq X_{n-m}(0)\right),
			\end{aligned}
		\end{equation}
		where $\hat X(t)$ is TASEP with left drift (i.e., $q=1$ and $p=0$) and initial condition
		\begin{equation}
			\hat X_r(0)=\begin{cases}
				s+(n-r),\quad & \text{if}\ r\leq n, \\
				-\infty, & \text{otherwise}.
			\end{cases}
		\end{equation}
		Note that the initial condition $\hat X(t)$ is a step initial condition. Hence, skew-time reversibility establishes a connection between general initial condition and step initial condition. Backwards geodesic provides the same property.
		
	\end{rem}
	
	 For ASEP with $p<1$, these equalities are not anymore true, but
	\begin{equation}
		X_{N-N(t\downarrow0)}^{\mathrm{step},{X,{N(t\downarrow0)}}}(t)+X_{N(t\downarrow0)}(0)\geq \min_{M\leq N}\left\{X_{N-M}^{\mathrm{step},X,M}(t)+X_M(0)\right\}\geq X_N(t).
	\end{equation}
	\begin{lem}\label{g0}
		Let $X(t)$ be ASEP with jump rate \eqref{aseprate}, then, almost surely,
		\begin{equation}\label{dnt1}
			D_N(t)=X_{N-N(t\downarrow0)}^{\mathrm{step},{X,{N(t\downarrow0)}}}(t)+X_{N(t\downarrow0)}(0)-X_N(t)\geq 0.
		\end{equation}
	\end{lem}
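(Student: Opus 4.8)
The plan is to deduce the inequality directly from the attractiveness of the clock coupling, Lemma~\ref{attract}, together with the observation that the relevant upper bound on $X_N(t)$ is valid \emph{uniformly} over the shift parameter, so that plugging in the random choice $M=N(t\downarrow 0)$ is harmless.

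First I would fix a deterministic $M\in\Z$ and introduce the shifted step process $\tilde X^{(M)}_i(\cdot):=X^{\mathrm{step},X,M}_{i-M}(\cdot)+X_M(0)$, where the negative-labelled particles of $X^{\mathrm{step},X,M}$ (those with $i<M$) are frozen at $+\infty$ as in Definition~\ref{comp}. By construction $\tilde X^{(M)}_i$ evolves with the clocks $(T_i^+,T_i^-)$, exactly as $X_i$ does, so $X$ and $\tilde X^{(M)}$ are clock-coupled. Next I would check the initial ordering $X(0)\preceq\tilde X^{(M)}(0)$: for $i\ge M$ one has $\tilde X^{(M)}_i(0)=X_M(0)-(i-M)$, and since the labels of $X$ sit at distinct integer sites in strictly decreasing order, $X_i(0)\le X_M(0)-(i-M)$; for $i<M$ the inequality $X_i(0)\le+\infty$ is trivial. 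Lemma~\ref{attract} then produces an almost sure event $A_M$ on which $X_i(s)\le\tilde X^{(M)}_i(s)$ for every $i\in\Z$ and every $s\ge 0$.

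Finally I would take the intersection of the countably many events $A_M$, $M\in\Z$, which is still almost sure, and on it evaluate the bound at $i=N$, $s=t$ and at the random value $M=N(t\downarrow 0)$. This yields
\begin{equation*}
X_N(t)\le\tilde X^{(N(t\downarrow 0))}_N(t)=X^{\mathrm{step},X,N(t\downarrow 0)}_{N-N(t\downarrow 0)}(t)+X_{N(t\downarrow 0)}(0),
\end{equation*}
that is, $D_N(t)\ge 0$; and if $N(t\downarrow 0)>N$ the right-hand side is $+\infty$ by the $\infty$-convention, so there is nothing to prove.

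There is no serious obstacle here: the argument is just monotonicity. The only points that deserve a word of care are (i) tracking the clock identification between $X$ and the shifted step process coming from Definition~\ref{comp}, and handling the frozen $\infty$-particles (which never move and always sit strictly to the right of the genuine particles, so they neither violate nor interfere with the ordering); and (ii) justifying that a bound valid for every deterministic $M$ may be evaluated at the random index $N(t\downarrow 0)$, which is exactly what the countable intersection over $M$ provides. I would stress that the backwards geodesic itself is not used to establish the sign of $D_N(t)$ — it merely selects which among these uniformly valid upper bounds on $X_N(t)$ is being compared with $X_N(t)$, the non-sharpness of this particular choice being precisely the (non-universal) quantity probed numerically in the later sections.
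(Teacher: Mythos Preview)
Your proof is correct and follows essentially the same route as the paper: compare $X$ with the shifted step process via the clock coupling and invoke Lemma~\ref{attract}. Two minor differences are worth noting: the paper inserts an intermediate truncated process $Y$ (with $Y_n(0)=X_n(0)$ for $n\ge N(t\downarrow 0)$ and $\infty$ otherwise) and chains $X\preceq Y\preceq Z$, whereas you go directly $X\preceq \tilde X^{(M)}$; and you are more explicit about handling the randomness of $N(t\downarrow 0)$ by first fixing deterministic $M$ and then intersecting over the countable family, a point the paper leaves implicit.
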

	\begin{proof}
		Let $Y(t)$ be a ASEP with initial condition
		\begin{equation}
			Y_n(0)=\begin{cases}
				X_n(0),\quad&\mathrm{if}\ n\geq N(t\downarrow 0),\\
				\infty,&\mathrm{otherwise},
			\end{cases}
		\end{equation}
		and  couple $Y(t)$ and $X(t)$ via clock coupling. By Lemma~\ref{attract}, we have almost surely $X(t)\preceq Y(t)$. Define process $Z(t)$ as
		\begin{equation}
			Z_m(t)=X_{m-N(t\downarrow0)}^{\mathrm{step},{X,{N(t\downarrow0)}}}(t)+X_{N(t\downarrow0)}(0).
		\end{equation}
		Then by Definition~\ref{comp}, we have
		\begin{equation}
			Z_m(0)\stackrel{\eqref{ini}}{=}-m\geq Y_m(0),\quad\forall m\geq N(t\downarrow0),
		\end{equation}
		and particle $Z_m$ and $Y_m$ use the same Poisson clock. Using Lemma~\ref{attract} once again, we  obtain
		\begin{equation}
			Z_m(t)\geq Y_m(t)\geq X_m(t),\quad\forall m\geq N(t\downarrow0),\ t\geq 0,
		\end{equation}
		from which \eqref{dnt1} follows.
	\end{proof}

	\subsection{Observables}\label{sec23}
	
	\subsubsection{Limit behaviour of the end-point of the backwards geodesic }
	
	In this paper, we consider flat non-random initial condition with density $1/2$, that is
	\begin{equation}\label{flat}
		X_n(0)=-2n,\quad\forall n\in\Z.
	\end{equation}
	The first observable we are interested in is the end-point of the backwards geodesic, that is, $X_{N(t\downarrow 0)}(0)$.
	
	In TASEP, backwards geodesic could be viewed as analog of geodesics in the last passage percolation models \cite{BBF21}. Therefore the law of $X_{N(t\downarrow 0)}(0)$ should converges in the scaling limit to the end-point of the geodesics for last passage percolation for the point-to-line problem. This is given by
	\begin{equation}\label{uhat}
		\hat u=\argmax_{u\in\R}\{{\cal A}_2(u)-u^2\},
	\end{equation}
 where ${\cal A}_2$ is Airy$_2$ process \cite{PS02}.

	In \cite{Jo03b} it is proven that\footnote{To be precise, it is proven under the assumption of uniqueness of $\hat u$, which was subsequently shown in \cite{CH11}.} $\lim_{N\to\infty}\mathcal T_N\stackrel{d}{=}\hat u$, where $\mathcal T_N$ is the endpoint of geodesic in a point-to-point last passage percolation (LPP) setting. We show a similar result for the end-point of the backwards geodesic in TASEP.
	\begin{thm}\label{convergence}
		Consider TASEP with flat initial condition \eqref{flat}. Then, for any $N\in\Z$, we have $\lim_{t\to\infty}B^{\rm TASEP}_t\stackrel{d}{=}\hat u$, where
		\begin{equation}\label{goal}
			B^{\rm TASEP}_t=\frac{X_{N(t\downarrow0)}(0)-X_N(0)-t/2}{2^{1/3}t^{2/3}}.
		\end{equation}
	\end{thm}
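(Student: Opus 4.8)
The plan is to turn the exact TASEP identity \eqref{tasep} into a statement about the geodesic end-point in a line-to-point exponential last passage percolation (LPP) problem, and then to invoke the known asymptotics of the latter. By translation covariance of the flat initial condition \eqref{flat} we may take $N=0$. Then \eqref{tasep} says that the end-point is the minimiser
\begin{equation}
M_t:=N(t\downarrow 0)=\argmin_{M\le 0}\{X^{\mathrm{step},X,M}_{-M}(t)+X_M(0)\}
\end{equation}
(a.s.), so, writing $j=-M\ge 0$, using $X_{-j}(0)=2j$, and setting $J(j):=X^{\mathrm{step},X,-j}_{j}(t)+j$ for the number of jumps made by particle $j$ of the step process,
\begin{equation}
X_0(t)=\min_{j\ge 0}\{J(j)+j\},\qquad M_t=-j^\ast,\quad j^\ast=\argmin_{j\ge0}\{J(j)+j\},
\end{equation}
whence $X_{M_t}(0)=2j^\ast$ and $B^{\mathrm{TASEP}}_t=(2j^\ast-t/2)/(2^{1/3}t^{2/3})$. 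It therefore remains to determine the asymptotics of $j^\ast$.

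Next I would pass to LPP. Letting $\{w_{m,k}\}_{m\in\Z,k\ge 1}$ be the inter-arrival times of the clocks $\{T_m^+\}$ — an i.i.d.\ family of $\mathrm{Exp}(1)$ weights — Johansson's correspondence gives $J(j)=\max\{b\ge0:\ L((-j,1)\to(0,b))\le t\}$, where $L$ is exponential LPP over up-right lattice paths in this single field, and hence $X_0(t)=\max\{m\ge0:\ \max_{0\le j\le m-1}L((-j,1)\to(0,m-j))\le t\}$, with $j^\ast$ the maximiser in the inner maximum at $m=X_0(t)$. All competing paths here have the common total displacement $m-1$ and run from the horizontal line $\{b=1\}$ to the vertical line $\{a=0\}$; after the usual reflection this is a line-to-point last passage problem, with $j$ the coordinate along the line at which the maximising path starts. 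The asymptotics of $j^\ast$ then follow from the line-to-point LPP results of \cite{SCH12,BKS12}: once centred and rescaled, the location of the end-point converges in distribution to $\hat u=\argmax_{u\in\R}\{{\cal A}_2(u)-u^2\}$, uniqueness of the maximiser being supplied by \cite{CH11}. The inputs behind this are that $j\mapsto J(j)$, read near $j_0\sim t/4$ on the transversal scale $\sim t^{2/3}$, is a rescaled step-initial-condition TASEP particle process whose process-level limit is the Airy$_2$ process minus a parabola (see \cite{PS02}), together with the parabolic curvature of the macroscopic shape — which confines $j^\ast$ to an $O(t^{2/3})$ window of $t/4$ (geodesic localisation) — and tightness of the rescaled profile.

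Finally I would match the constants. The deterministic profile $J(j)+j\approx t-2\sqrt{tj}+2j$ is minimal at $j=t/4$ with value $t/2$ and curvature $4/t$; expanding to second order and feeding in the KPZ-scaling constants for TASEP at density $1/2$ (this is exactly where the coefficient $2$ in $X_n(0)=-2n$ and the numbers $1/2,2^{1/3}$ enter), the offset $j=t/4+v$ corresponds on the LPP side to the transversal parameter $\eta=v/(X_0(t)/2)^{2/3}$, so that (to the relevant order) $j^\ast=\tfrac12 X_0(t)+X_0(t)^{2/3}\hat u+o_{\Pb}(t^{2/3})$. Since $X_0(t)-t/2=O_{\Pb}(t^{1/3})$ — the GOE Tracy--Widom fluctuations of the flat-initial-condition current, negligible after dividing by $t^{2/3}$ — substituting back gives $B^{\mathrm{TASEP}}_t\to\hat u$; the normalisation in \eqref{goal} is precisely the right one and no appeal to the symmetry $\hat u\stackrel{d}{=}-\hat u$ is needed. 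The main obstacle I anticipate is making step two fit the exact hypotheses of \cite{SCH12,BKS12}: the correspondence produces a boundary that is naturally a pair of coordinate half-lines rather than a single anti-diagonal, so one must either verify that their statements cover this geometry or insert a short coupling/monotonicity argument showing the difference is immaterial on the $t^{2/3}$ scale; a secondary, purely bookkeeping difficulty is propagating the rescaling through the chain of coordinate changes without mislaying constants.
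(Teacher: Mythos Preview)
Your strategy---identify $N(t\downarrow 0)$ as a minimizer via \eqref{tasep}, pass to an LPP picture, and read off the endpoint law from known line-to-point results---is viable, and your embedding of the coupled family $\{X^{\mathrm{step},X,-j}_j(t)\}_j$ into a single exponential LPP field is correct. But it is a genuinely different route from the paper's, and the references you invoke do not quite close the argument.

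The paper stays entirely on the TASEP side. It first sharpens \eqref{tasep}: $N(t\downarrow 0)$ is not just \emph{a} minimizer but the \emph{largest} index achieving the minimum (Lemma~\ref{lm21}); your ``$=\argmin$'' glosses over this non-uniqueness. It then proves a joint distributional identity (Lemma~\ref{connect}) showing that the coupled family $\{X^{\mathrm{step},X,n}_{N-n}(t)\}_{n}$ has the same law as the particle profile $\{X^{\mathrm{step}}_{N-n}(t)\}_n$ of a \emph{single} step TASEP---this is the TASEP-side counterpart of your single-field LPP embedding, and with it $u_t\stackrel{d}{=}\hat u_t$, the rescaled location of the maximum of $H_t$. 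Finally it proves tightness of $\hat u_t$ directly (Lemma~\ref{ltight}) and combines this with the process-level convergence $H_t\Rightarrow\mathcal A_2(\cdot)-(\cdot)^2$ from \cite{BBF21} and uniqueness of the maximizer \cite{CH11} via the continuous-mapping argument of \cite{Jo03b}.

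Your ``inputs behind this'' paragraph in fact lists exactly these ingredients, so you are close in substance; the gap is in the packaging. The references \cite{SCH12,BKS12} compute the density of the limiting pair $(\hat u,\max)$---they do not supply convergence of prelimiting geodesic endpoints in exponential LPP. The convergence statement is in \cite{Jo03b}, but for PNG/geometric weights; carrying it over to exponential LPP with your half-line boundary still requires the functional convergence plus localization you list, and those are not off-the-shelf in this geometry. The paper's Remark following Lemma~\ref{ltight} says this explicitly: the LPP analogue exists (cf.\ \cite{FO17}) but ``additional steps are required to translate this result into the setting of TASEP'', which is why the authors give a direct proof. So the obstacle you anticipated is real, and resolving it would amount to reproving Lemmas~\ref{connect}--\ref{ltight} in LPP coordinates rather than citing a black box. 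Your constant-matching paragraph is fine.
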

	Based on KPZ scaling theory \cite{Spo14}, which will be discussed in more detail in Section~\ref{kpz}, we get the following predictions for ASEP and \newasep.
	\begin{conj}\label{asepconj}
		Consider ASEP with flat initial condition \eqref{flat}. Then for any $N\in\Z$ we have $\lim_{t\to\infty}	B^{\rm ASEP}_t\stackrel{d}{=}\hat u$, where
		\begin{equation}
			B^{\rm ASEP}_t=\frac{X_{N(t\downarrow0)}(0)-X_N(0)-(p-q)t/2}{2^{1/3}((p-q)t)^{2/3}}.
		\end{equation}
	\end{conj}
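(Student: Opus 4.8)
\section*{Proof proposal}

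The plan is to reduce the ASEP statement to the already-proved TASEP result (Theorem~\ref{convergence}) by exploiting the well-known fact that ASEP is, in the relevant large-time regime, a time-rescaled version of TASEP at the level of its KPZ-universal fluctuations. The quantity $B^{\rm ASEP}_t$ is designed precisely so that the deterministic centering $X_N(0)+(p-q)t/2$ and the fluctuation scale $2^{1/3}((p-q)t)^{2/3}$ replace the TASEP quantities $t/2$ and $2^{1/3}t^{2/3}$ by their ASEP analogues, which amounts to the substitution $t\mapsto (p-q)t$. So morally one expects $B^{\rm ASEP}_t\approx B^{\rm TASEP}_{(p-q)t}$, and since Theorem~\ref{convergence} holds for every fixed $N$ and every diverging time, the limit $\hat u$ should carry over verbatim. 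Making this rigorous is the whole content of the argument.

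First I would isolate exactly what is universal about the TASEP proof. The end-point $X_{N(t\downarrow 0)}(0)$ is a functional of the backwards geodesic, and its limit law $\hat u=\argmax_{u\in\R}\{{\cal A}_2(u)-u^2\}$ arises because, for the flat initial condition \eqref{flat}, the family of shifted height functions (equivalently the processes $M\mapsto X_{N-M}^{\mathrm{step},X,M}(t)+X_M(0)$ appearing in \eqref{tasep}) converges, under the $1{:}2{:}3$ scaling, to the Airy$_2$ process minus a parabola, and the $\argmax$ of this limiting field is $\hat u$. For ASEP the same scaling-limit statement is expected to hold with the single change $t\mapsto (p-q)t$ in the time normalization: this is exactly the KPZ-scaling-theory input announced in Section~\ref{kpz}, where $(p-q)$ is the effective speed governing the diffusion coefficient of ASEP. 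Concretely I would invoke the known convergence of the flat-initial-condition ASEP height function to the Airy$_1$/Airy$_2$ ensembles (via \cite{QS20} and the point-to-line to point-to-point comparison used for TASEP) to assert that the rescaled process
\begin{equation}
M\longmapsto \frac{X_{N-M}^{\mathrm{step},X,M}(t)+X_M(0)-X_N(0)-(p-q)t/2}{2^{1/3}((p-q)t)^{2/3}}
\end{equation}
converges in distribution, as a process in the rescaled index variable, to the same limiting field ${\cal A}_2(u)-u^2$ that governed TASEP.

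Next I would transfer the $\argmax$ from the process to its limit. Because \eqref{tasep} is only an inequality for ASEP, I cannot directly identify $X_N(t)$ with the minimizing index; instead I would use Lemma~\ref{g0}, which controls the discrepancy $D_N(t)\geq 0$, to show that on the $t^{2/3}$ scale the location $N(t\downarrow 0)$ that the backwards geodesic actually selects and the true minimizer of the rescaled field coincide in the limit. The key analytic step is a continuous-mapping argument: given convergence of the field to ${\cal A}_2(u)-u^2$ and the almost-sure uniqueness of $\hat u$ (guaranteed by \cite{CH11}), the location of the minimizer converges in distribution to $\hat u$, and $B^{\rm ASEP}_t$ is exactly this rescaled minimizer. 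I would combine this with the discrepancy bound from Lemma~\ref{g0}, together with the scaling input from Lemma~\ref{lemScaling} showing that $D_N(t)/t^{1/3}\to 0$, to ensure the geodesic end-point is not pushed off the $\hat u$-neighbourhood.

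The main obstacle is precisely this last point: for TASEP the identity \eqref{tasep} makes the backwards geodesic end-point equal to the exact minimizer, so convergence of the field immediately yields convergence of the end-point; but for ASEP only the inequality holds, so I must separately argue that the $O(t^{1/3})$-or-smaller discrepancy $D_N(t)$ does not displace the selected index on the $t^{2/3}$ scale. I expect this to require either a tightness/localization estimate for the backwards geodesic or a direct comparison of the two minimizers using the monotonicity from Lemma~\ref{attract}. It should be stressed that, as the excerpt itself labels this a \textbf{Conjecture}, a fully rigorous version of this last step is not currently available — the scaling-limit convergence of the flat-initial-condition ASEP field to the Airy$_2$ process and the requisite uniform control of $D_N(t)$ on the fluctuation scale are exactly the ingredients whose rigorous proof is still open, which is why the statement is numerically verified in Section~\ref{sectSimulation} rather than proved.
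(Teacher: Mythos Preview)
The paper does not prove this statement: it is explicitly labelled a conjecture. The paper's own treatment consists of (i) a heuristic derivation of the centering and scaling constants via the KPZ scaling theory in Section~\ref{kpz} (computing $J(\rho)=(p-q)\rho(1-\rho)$, $A(\rho)=\rho(1-\rho)$, $\Gamma(\rho)=(p-q)/8$ and plugging $\rho=1/2$ into the general Conjecture~\ref{12}), and (ii) numerical verification in Section~\ref{asepbkp}. You correctly flag this in your final paragraph, so in that sense your proposal is consistent with the paper.

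That said, the proof sketch you lay out before that disclaimer contains a gap that is more serious than you indicate. You write that Lemma~\ref{lemScaling} gives $D_N(t)/t^{1/3}\to 0$; it does not. Lemma~\ref{lemScaling} controls the discrepancy between $X_N(t)$ and the true minimum $\min_{M\leq N}\{\cdots\}$, not the discrepancy $D_N(t)$ evaluated at the geodesic index $N(t\downarrow 0)$. The paper itself says immediately after Lemma~\ref{lemScaling} that whether $D_N(t)/t^{1/3}\to 0$ ``remains unclear'', and Conjecture~\ref{ConjDiscrepancy} is precisely about the open behaviour of $D_N(t)$. More fundamentally, the TASEP proof of Theorem~\ref{convergence} hinges on Lemma~\ref{lm21}, which identifies $N(t\downarrow 0)$ \emph{exactly} with the rightmost minimizer of the field $n\mapsto X^{{\rm step},X,n}_{N-n}(t)+X_n(0)$; this is what lets one pass from functional convergence of the field to convergence of the argmax. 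For ASEP no variational characterization of the dynamically defined $N(t\downarrow 0)$ is available, so even granting convergence of the rescaled field to ${\cal A}_2(u)-u^2$ via \cite{QS20}, you cannot conclude anything about the \emph{location} $N(t\downarrow 0)$ from value bounds alone: a small value discrepancy does not force location closeness on the $t^{2/3}$ scale without a separate near-maximizer localization argument, and here you do not even have the value discrepancy under control. This missing link between the dynamical geodesic and any variational quantity is exactly why the statement remains a conjecture.
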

	To state the conjecture for \newasep, we need additional assumptions on its jump rate, under which the invariant measure is known \cite{KS92} and it allows us to compute analytically all model-dependent parameters in the scaling conjecture. Fix parameters $\beta,E\in\R$ and choose the jump rates in \eqref{jp1}, \eqref{jp2}  as
	\begin{equation}\label{asepnjumprate}
		\begin{array}{lll}
			\alpha_2=1, &\alpha_3=\frac12\left(1+e^{- \beta}\right),&\alpha_4=e^{-\beta},\\[1em]
			\gamma_2=e^{-\beta}e^{-E}, &\gamma_3=\frac12\left(1+e^{-\beta}\right)e^{-E},& \gamma_4=e^{-E}.
		\end{array}
	\end{equation}

	\begin{conj}\label{conj}
		Consider \newasep, with flat initial condition \eqref{flat} and rates given by \eqref{asepnjumprate}. Then, for any $N\in\Z$, we have $\lim_{t\to\infty}	B^{{\rm ASEPsc}}_t\stackrel{d}{=}\hat u$, where
		\begin{equation}\label{basepsc}
			B^{{\rm ASEPsc}}_t=\frac{X_{N(t\downarrow0)}(0)-X_N(0)-2J(\beta,E)t}{2^{1/3}t^{2/3}\Gamma(\beta,E)^{2/3}A(\beta )^{-1}},
		\end{equation}
		where
		\begin{equation}\label{e122}
			\begin{aligned}
				J(\beta,E)&=\frac{1-e^{-E}}{2 \left(e^{\beta /2}+e^{\beta }\right)}, \quad A(\beta)=\frac{e^{\beta /2}}{4},\\
				\Gamma(\beta,E)&=\frac{\left(3 e^{\beta /2}-1\right)
					\left(1-e^{-E}\right)}{e^{\beta /2}+e^{\beta }}.
			\end{aligned}
		\end{equation}
	\end{conj}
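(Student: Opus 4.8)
Because \newasep{} is non-integrable, the statement is a \emph{conjecture} rather than a theorem, and the natural plan is to reduce it to two ingredients: an exact computation of the non-universal coefficients $J,A,\Gamma$ from the known stationary measure, and the (conjectural) convergence of the model to the KPZ fixed point. I would organize the derivation around the KPZ scaling theory of Section~\ref{kpz} (see \cite{Spo14}), whose only inputs are the density--current relation $j(\rho)$ and the static compressibility $A(\rho)=\sum_{x\in\Z}(\langle\eta(0)\eta(x)\rangle_\rho-\rho^2)$ in the translation-invariant stationary state of density $\rho$. The point of the parametrization \eqref{asepnjumprate} is precisely that, by \cite{KS92}, the invariant measure for these rates is an explicit one-parameter (in $\rho$) Gibbs/Markov measure, so that both $j(\rho)$ and the stationary two-point function are available in closed form.

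First I would write down that stationary measure and extract the macroscopic data at the relevant density $1/2$. From $j(\rho)$ one reads off $J(\beta,E)=j(1/2)$; the centering $2J(\beta,E)t$ in \eqref{basepsc} is then the ballistic tagged-particle displacement, since at density $1/2$ the particle velocity equals $j(1/2)/(1/2)=2J$. The curvature $\lambda=j''(1/2)$ and the compressibility $A=A(1/2)$ (which I expect to yield $A(\beta)=e^{\beta/2}/4$ directly from the two-point function) then assemble into the KPZ amplitude $\Gamma$ via Spohn's formula $\Gamma\propto|\lambda|A^2$, and matching this against the closed forms \eqref{e122} for $\Gamma(\beta,E)$ is a finite, routine computation once the convention of Section~\ref{kpz} is fixed. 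As a sanity check I would examine the limit $\beta=0,\ E\to\infty$, in which the right rates all become $1$ and the left rates vanish so that \newasep{} reduces to TASEP, and verify that the constants are consistent (after accounting for the normalization conventions) with Theorem~\ref{convergence} and with $A=1/4$, $J=1/4$.

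Second, with the constants in hand, I would identify the limit law. Under the universality conjecture the height function converges, in the scaling fixed by $J,A,\Gamma$, to the KPZ fixed point $h(x,t)=\max_{y\in\R}\{h(y,0)+\mathcal{L}(y,0;x,t)\}$. For flat initial data the maximizer, suitably rescaled, is governed by the point-to-line variational problem and hence by $\mathcal{A}_2$ minus a symmetric parabola, so the centered and scaled end-point converges to $\hat u=\argmax_{u\in\R}\{\mathcal{A}_2(u)-u^2\}$ exactly as in the TASEP case \eqref{goal}. The factor $A(\beta)^{-1}$ in the transverse scale of \eqref{basepsc} is the Jacobian converting the height-function spatial coordinate into the particle-label/position coordinate in which $X_{N(t\downarrow0)}(0)$ actually lives, while $\Gamma^{2/3}$ is the KPZ amplitude and $2^{1/3}$ the universal constant fixed by the curvature of the parabola; I would pin these down by the same change of variables used for Theorem~\ref{convergence}, now carrying the model-dependent $\Gamma$ and $A$ through.

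The main obstacle is the second ingredient: convergence of \newasep{} to the KPZ fixed point (equivalently, convergence of the end-point to the $\mathcal{A}_2$-variational object) is unavailable for a non-integrable model --- it is known for ASEP through \cite{QS20}, but open here, which is exactly why Conjecture~\ref{conj} remains a conjecture. Everything up to that input is, by contrast, an exact computation; the honest plan is therefore to carry out the stationary-measure and KPZ-scaling-theory calculation rigorously, thereby reducing the statement to the single universality input, with \emph{no} free parameters since $J,A,\Gamma$ are determined analytically. The numerics of Section~\ref{sectSimulation} then test precisely that remaining input.
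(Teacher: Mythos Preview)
Your plan is correct and essentially matches the paper's own derivation in Section~\ref{kpz}: compute $J(\rho)$ and $A(\rho)$ from the explicit Gibbs/Markov stationary measure of~\cite{KS92} (the paper does this in Lemma~\ref{asepn}), assemble $\Gamma=-A^2 J''$, specialize to $\rho=1/2$, and fix the universal constant $2^{1/3}$ by comparison with the TASEP result (Theorem~\ref{convergence}), leaving convergence to the KPZ fixed point as the sole conjectural input.

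One small imprecision: you identify the centering $2J(\beta,E)t$ as ``the ballistic tagged-particle displacement''. The quantity $X_{N(t\downarrow 0)}(0)-X_N(0)$ is not the tagged-particle displacement; the paper decomposes it as $[X_N(t)-X_N(0)]-[X_N(t)-X_{N(t\downarrow 0)}(0)]$, with macroscopic value $J(\rho)t/\rho - J'(\rho)t$, the second term coming from the characteristic speed (see Figure~\ref{characteristicLine} and Conjecture~\ref{12}). Your formula is right only because $J'(1/2)=0$ for this model, so the characteristic contribution vanishes; you should make that explicit rather than attribute the centering to tagged-particle motion alone.
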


    In Section~\ref{asepbkp} we present the numerical simulations confirming Conjecture~\ref{asepconj} and in Section~\ref{asepscbkp} for Conjecture~\ref{conj}.
	
	\subsubsection{Particle's position in speed changed ASEP}
	The KPZ scaling theory explained in Section~\ref{kpz} leads also to the following conjecture for the limiting distribution of particle's position in \newasep.
	\begin{conj}\label{conjp}
		For \newasep\ with initial condition \eqref{flat}. Define the rescaled process
		\begin{equation}\label{xresc}
			X^{\rm resc}_N(t)=\frac{X_N^{{\newasep}}(t)-X^{\newasep}_N(0)-2J(\beta,E)t}{-2\Gamma(\beta,E)^{1/3}t^{1/3}}.
		\end{equation}
		Then, for any $N\in\Z$, we have 	
		\begin{equation}\label{conjposition}
			\lim_{t\to\infty}\Pb\left(	X^{\rm resc}_N(t)\leq s\right)=F_{\rm GOE}(2s),\quad\forall s\in\R,
		\end{equation}
		where $F_{{\rm GOE}}$ is Tracy-Widom GOE distribution~\cite{TW96}.
	\end{conj}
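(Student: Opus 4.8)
Since this is a conjecture, what follows is a strategy together with an indication of where it currently breaks down. The argument splits into two essentially independent parts: (i) fixing the non-universal constants $J(\beta,E)$ and $\Gamma(\beta,E)$, which can be done rigorously, and (ii) identifying the limiting law as Tracy--Widom GOE, which is the genuinely hard step.

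For (i): the parametrization \eqref{asepnjumprate} is tuned so that \newasep\ admits the product Bernoulli$(\rho)$ stationary measures of \cite{KS92}. Using only that the one-site marginals are Bernoulli$(\rho)$ and that distinct sites are independent, one computes the stationary current $j(\rho)=\langle c_\eta(0,1)\eta(0)(1-\eta(1))-c_\eta(1,0)\eta(1)(1-\eta(0))\rangle_\rho$ as an explicit elementary function of $\rho$, together with the compressibility $\chi(\rho)=\rho(1-\rho)$ and vanishing truncated two-point function. The flat initial condition \eqref{flat} sits at density $\rho=1/2$, and the KPZ scaling theory recipe of \cite{Spo14,PS01} then produces the macroscopic particle velocity $2J(\beta,E)$ out of $j$, and the $t^{1/3}$- and $t^{2/3}$-amplitudes out of the nonlinearity $j''(1/2)$ and $\chi(1/2)=1/4$; carrying out the algebra reproduces $J(\beta,E)$, $A(\beta)$, $\Gamma(\beta,E)$ as in \eqref{e122}. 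This is the content of Section~\ref{kpz} and uses no integrable structure beyond the product form of the stationary measure.

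For (ii): with density $1/2$ the interface $n\mapsto X_n(t)+n$ starts from a straight line, i.e.\ flat initial data for the associated growth model, so under KPZ universality the centred and rescaled height should converge to the KPZ fixed point started from a flat profile, whose one-point marginal is $\Pb({\cal A}_1(0)\le s)=F_{\rm GOE}(2s)$ with $F_{\rm GOE}$ the Tracy--Widom GOE law \cite{TW96}; composing this with the dictionary from (i) gives precisely \eqref{conjposition}. To make the convergence of $X^{\rm resc}_N(t)$ rigorous one would want one of: an exact-formula route (determinantal or Pfaffian kernels, or Bethe ansatz, as for TASEP in \cite{Fer18} and ASEP in \cite{QS20}), which is unavailable because \newasep\ is non-integrable by construction; a comparison against ASEP through the clock coupling of Lemma~\ref{attract}, which controls the particle ordering but not fluctuations on the $t^{1/3}$ scale, and in any case ASEP and \newasep\ have different constants $J,\Gamma$; or the robust hydrodynamic-plus-stationarity machinery built directly on the product invariant measure (second-class particle and variance estimates), which should give the $t^{1/3}$ order of fluctuations and the $t^{2/3}$ correlation length but does not pin down the limiting distribution.

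The main obstacle is thus step (ii), and more precisely the distributional identification: every existing proof of a Tracy--Widom limit in this universality class passes through an exact formula, and \newasep\ was chosen precisely to have none. Absent a general universality theorem for the KPZ fixed point covering non-integrable exclusion dynamics, I do not expect (ii) to be within reach of current methods, which is why the statement is verified numerically in Section~\ref{asepscpp} rather than proved.
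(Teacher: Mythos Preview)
Your overall framing is right: the statement is a conjecture, the paper does not prove it, and the constants in \eqref{xresc} are fixed by the KPZ scaling theory of Section~\ref{kpz} while the distributional convergence in \eqref{conjposition} is supported only by the numerics of Section~\ref{asepscpp}. Your discussion of part (ii) correctly identifies the obstruction.

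However, part (i) contains a substantive error. The stationary measure of \newasep\ under the parametrization \eqref{asepnjumprate} is \emph{not} product Bernoulli: it is the nearest-neighbour Gibbs measure \eqref{stationary},
\[
\mu_\rho(\eta)\propto \exp\Big(\beta\sum_i \eta(i)\eta(i+1)+h\sum_i\eta(i)\Big),
\]
which has genuinely nontrivial two-point correlations unless $\beta=0$. In particular the truncated two-point function does not vanish, and the integrated covariance is not $\chi(\rho)=\rho(1-\rho)$; the paper computes (Lemma~\ref{asepn})
\[
A(\rho,\beta,E)=\rho(1-\rho)\sqrt{(1-2\rho)^2+4\rho(1-\rho)e^\beta},
\]
so that $A(1/2)=e^{\beta/2}/4$, not $1/4$. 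The computation of the current $J(\rho)$ likewise cannot be done by multiplying independent Bernoulli one-site marginals; one needs the spatial Markov property \eqref{spacemarkov}--\eqref{spm} to evaluate the four-site probabilities in \eqref{positive}, express the transition probabilities $\hat\alpha,\hat\beta$ in terms of $(\rho,\beta)$ via \eqref{r} and \eqref{minusbeta}, and only then obtain the formulas \eqref{e122}. If you actually carried out your product-measure calculation you would get $A(\beta)=1/4$ and the wrong $\Gamma(\beta,E)$, so the scaling in \eqref{xresc} would be off by a factor $e^{\beta/3}$ and \eqref{conjposition} would fail even at the level of constants. Replace your product-Bernoulli step by the transfer-matrix/spatial-Markov computation of Lemma~\ref{asepn} and part (i) becomes correct.
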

	
    In Section~\ref{asepscpp} we numerically show that this conjecture holds true.

	\subsubsection{Discrepancy for ASEP}
	Recall that $D_N(t)$ defined in \eqref{dnt1} is generally not zero in ASEP, but positive. Moreover, $D_N(t)$ is an upper bound for the discrepancy between $\min_{M\leq N}\{X_{N-M}^{\mathrm{step},X,M}(t)+X_M(0)\}$ and $X_N(t)$. From recent results on ASEP, it follows that
\begin{lem}\label{lemScaling}
We have
	\begin{equation}
\lim_{t\to\infty}\frac{\min_{M\leq N}\{X_{N-M}^{\mathrm{step},X,M}(t)+X_M(0)\}-X_N(t)}{t^{1/3}} = 0
\end{equation}
in probability.
\end{lem}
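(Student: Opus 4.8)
The plan is to deduce Lemma~\ref{lemScaling} from the convergence of ASEP height functions to the KPZ fixed point established in~\cite{QS20}. The key point is that the quantity $\min_{M\leq N}\{X_{N-M}^{\mathrm{step},X,M}(t)+X_M(0)\}$ is, after the identification $n\mapsto X_n(t)+n$ of the height function, nothing but the maximum over a one-parameter family that converges to the variational formula for the KPZ fixed point, while $X_N(t)$ corresponds to the ASEP height function evaluated at the same point. Since both converge (under the $1:2:3$ scaling with the appropriate non-universal coefficients for ASEP, which are $(p-q)t/2$ for the centering and $2^{1/3}((p-q)t)^{1/3}$ for the fluctuation scale) to the \emph{same} limit, namely $h(x,t)=\max_y\{h(y,0)+\mathcal L(y,0;x,t)\}$ with flat $h(\cdot,0)$, their difference divided by $t^{1/3}$ must vanish in probability.

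First I would set up the correspondence precisely. Writing the ASEP height function as $h_t^{\rm ASEP}(x) = X_{\lfloor x\rfloor}(t) + \lfloor x\rfloor$ (or the continuum interpolation thereof), the step-initial-condition quantity $X_{N-M}^{\mathrm{step},X,M}(t)$ is the height of a step-initialized ASEP using the shifted clocks; the concatenation structure means $\min_{M\leq N}\{X_{N-M}^{\mathrm{step},X,M}(t)+X_M(0)\}$ discretizes the variational problem $\max_{y}\{h(y,0) + \mathcal L_\e(y,0;x,t)\}$, where $\mathcal L_\e$ is the scaled ASEP ``two-point function'' with step data. By~\cite{QS20}, under the KPZ scaling, $\mathcal L_\e$ converges to the directed landscape $\mathcal L$ (or at least the relevant marginals / the one-point and line-to-point statistics we need converge to those of $\mathcal L$), and $h_t^{\rm ASEP}$ converges to the KPZ fixed point $h(\cdot,t)$. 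On the other hand, $X_N(t)+N = h_t^{\rm ASEP}(N)$ also converges, after the same recentering and rescaling, to $h(N',t)$ for the corresponding macroscopic point $N'$. Thus both $t^{-1/3}$-rescaled quantities converge to the same random variable $h(N',t) = \max_y\{h(y,0)+\mathcal L(y,0;N',t)\}$.

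The remaining step is to upgrade ``both rescaled quantities converge to the same limit'' to ``their difference, rescaled by $t^{1/3}$, tends to $0$ in probability.'' Here I would use that the difference is nonnegative (this is exactly inequality~\eqref{eq3}, which follows since TASEP's identity~\eqref{tasep} becomes an inequality for ASEP, cf. the discussion preceding Lemma~\ref{g0}), so it suffices to show the rescaled difference does not stay bounded away from $0$; and by the joint convergence of the pair to the pair $(h(N',t), h(N',t))$ — i.e. to a degenerate limit concentrated on the diagonal — one gets convergence of the difference to $0$ in distribution, hence in probability since the limit is deterministic. The one subtlety requiring care is that we need \emph{joint} convergence of $(X_N(t), \min_{M\le N}\{\cdots\})$ to the diagonal, not merely convergence of the two marginals separately; but since the two objects live on the same probability space with the same driving Poisson clocks and the min-quantity dominates $X_N(t)$ pointwise, tightness plus the fact that every subsequential limit has equal coordinates (each being the KPZ-fixed-point value at the same point) forces the joint limit onto the diagonal.

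The main obstacle I anticipate is making rigorous the claim that the discrete variational quantity $\min_{M\leq N}\{X_{N-M}^{\mathrm{step},X,M}(t)+X_M(0)\}$ converges, after rescaling, to the KPZ-fixed-point variational formula $\max_y\{h(y,0)+\mathcal L(y,0;N',t)\}$ with the \emph{same} scaling constants as the ASEP height function itself. The result~\cite{QS20} directly gives convergence of ASEP height functions with general (including flat) initial data to the KPZ fixed point; the cleanest route is therefore to avoid the landscape entirely and instead argue: (i) $X_N(t)+N = h_t^{\rm ASEP}(N)$, with flat initial data, converges under $1:2:3$ scaling to $h(N',t)$ by~\cite{QS20}; (ii) the min-quantity, being sandwiched between $X_N(t)+N$ and the height function of ASEP started from the \emph{best} step-modified initial condition, which by attractivity (Lemma~\ref{attract}) and the hydrodynamic/fluctuation limit has the same macroscopic value and $t^{1/3}$-order fluctuations, is also asymptotically $h(N',t)$ after the same rescaling; and (iii) conclude by the squeeze together with nonnegativity of the difference. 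Identifying the correct intermediate bound in step~(ii) and checking it has the matching scaling is where the real work lies.
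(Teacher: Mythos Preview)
Your approach is essentially the paper's: use~\cite{QS20} to show that both the rescaled $X_N(t)$ and the rescaled $\min_M\{\cdots\}$ converge in distribution to the same limit, then exploit the a.s.\ ordering to conclude the difference vanishes. The paper carries this out concretely by identifying the common limit as GOE Tracy--Widom: the flat ASEP height function converges to $F_{\rm GOE}$ by~\cite{OQR15,QS20}, the step-initial-condition process converges to ${\cal A}_2(u)-u^2$ by~\cite{QS20}, and then Johansson's identity $\Pb(2^{-1/3}\max_u\{{\cal A}_2(u)-u^2\}\le s)=F_{\rm GOE}(2s)$ matches the two.

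One point where you are overcomplicating: you do \emph{not} need joint convergence of the pair to the diagonal. If $A_t\ge B_t$ a.s.\ and $A_t\Rightarrow X$, $B_t\Rightarrow X$ marginally, then $A_t-B_t\to 0$ in probability; this is an elementary consequence of stochastic domination plus equality of the limiting CDFs (the paper cites Lemma~4.1 of~\cite{BC09}). So your ``one subtlety requiring care'' is not a subtlety at all, and your proposed squeeze argument in step~(ii) is unnecessary: marginal convergence of each quantity separately, plus the pointwise inequality, already suffices.
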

\begin{proof}
	It is enough to show $\lim_{t\to\infty}D^{{\rm resc}}_N(t)\stackrel{d}{=}0$, where
	\begin{equation}\label{eq2.25}
		D^{{\rm resc}}_N(t):=\frac{\min_{M\leq N}\{X_{N-M}^{\mathrm{step},X,M}(t)+X_M(0)\}}{t^{1/3}}-\frac{X_N(t)}{t^{1/3}}.
	\end{equation}
	Recall that $X(t)$ is ASEP with flat initial condition with asymmetric parameter $p$. For $t>0,$ we define $\hat t:=(2p-1)t.$ Without loss of generality, we set $N=\hat t/4$. We have, see~\cite{OQR15,QS20},
	\begin{equation}\label{goecvg}
		\lim_{t\to \infty}\Pb\left(\frac{X_N(t)}{-\hat t^{1/3}}\leq s\right)=F_{{\rm GOE}}(2s).
	\end{equation}
	By \eqref{dnt1}, we have
	\begin{equation}\label{secondline}
		\begin{aligned}
			\frac{X_N(t)}{-\hat t^{1/3}}&\leq \max_{M\leq N}\frac{X_{N-M}^{{\rm step},X,M}+X_M(0)}{-\hat t^{1/3}}\\
			&\leq 2^{-1/3}\max_{u\geq 2^{-4/3}\hat t^{1/3}}\frac{X_{\hat t/4+2^{-2/3}\hat t^{2/3}u}^{{\rm step},X,-2^{-2/3}\hat t^{2/3}u}(t)+2^{1/3}\hat t^{2/3}u}{-2^{-1/3}\hat t^{1/3}},
		\end{aligned}
	\end{equation}
	where in the second step we parameterize $M$ by $-2^{-2/3}\hat t^{2/3}u$.

	In \cite{QS20}, it is showed that in the $t\to\infty$ limit the r.h.s.~of \eqref{secondline} converges to $2^{-1/3}T$ with $T=\max_{u\in\R}\left\{A_2(u)-u^2\right\}$. It is known that $\Pb\left(2^{-1/3}T\leq s\right)=F_{{\rm GOE}}(2s)$ \cite{Jo03b}.
	
	This means that, properly rescaled, both terms in the r.h.s.~of \eqref{eq2.25} converge to the same distribution as $t\to\infty$, and since $D^{{\rm resc}}_N(t)\geq 0$ one stochastically dominated the other. This implies that $D^{{\rm resc}}_N(t)$ scaled by $t^{-1/3}$ converges to $0$, see for instance Lemma~4.1 of~\cite{BC09}.
\end{proof}

If $N(t\downarrow 0)$ is close to the minimization index, then we would also expect that $D_N(t)/t^{1/3}\to 0$ as $t\to\infty$. However, it remains unclear whether the unscaled discrepancy $D_N(t)$ diverge or converge to a non-degenerate random variable as $t\to\infty$. Our numerical simulation indicates that the latter is the case.
	\begin{conj}\label{ConjDiscrepancy}
		There exists a non-degenerate (discrete) distribution $G$ such that
		\begin{equation}
			\lim_{t\to\infty}\Pb\left(D_N(t)\leq s\right)=G(s),\quad\forall s\in\R.
		\end{equation}
	\end{conj}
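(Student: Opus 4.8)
To attack Conjecture~\ref{ConjDiscrepancy}, the plan is to establish three facts: tightness of $\{D_N(t)\}_{t\ge 0}$, i.e.\ $D_N(t)=\Or_{\Pb}(1)$; convergence in law as $t\to\infty$; and non-degeneracy of the limit $G$. Since positions are integers, $D_N(t)\in\Z_{\ge 0}$ almost surely, so a limit, once it exists, is automatically a discrete law; the content is that it is $\Or(1)$ and not a point mass. As a preliminary one can already extract $D_N(t)=o_{\Pb}(t^{1/3})$ from the material above: by localization the end-point $N(t\downarrow 0)$ is, with high probability, a near-optimizer in~\eqref{tasep}, so $X_{N-N(t\downarrow0)}^{\mathrm{step},X,N(t\downarrow0)}(t)+X_{N(t\downarrow0)}(0)$ differs from $\min_{M\le N}\{\dots\}$ by $o(t^{1/3})$, and the latter differs from $X_N(t)$ by $o(t^{1/3})$ by Lemma~\ref{lemScaling}. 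The whole difficulty is to improve $o(t^{1/3})$ to $\Or(1)$.

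For tightness I would use the comparison processes $Y,Z$ from the proof of Lemma~\ref{g0}, for which $Z\succeq Y\succeq X$ under clock coupling (Lemma~\ref{attract}), and split $D_N(t)=\bigl(Z_N(t)-Y_N(t)\bigr)+\bigl(Y_N(t)-X_N(t)\bigr)$ into two non-negative pieces. The piece $Y_N(t)-X_N(t)$ is the effect of modifying the initial condition to the right of the geodesic end-point (sending the particles of index $<N(t\downarrow 0)$ to $+\infty$); it is identically zero for TASEP, since there $s\mapsto N(t\downarrow s)$ is non-increasing and those particles never enter the causal cone of $X_N(t)$, and for ASEP it is controlled by the excursions of the index process below its terminal value. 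The piece $Z_N(t)-Y_N(t)$ is the effect of replacing the flat initial condition to the left of the end-point by the step one, again zero for TASEP by the exact identity~\eqref{tasep}. Crucially, both pieces vanish to leading order precisely because the backwards geodesic end-point is, by construction, the maximizer of the underlying variational problem, so the two matched discrete variational problems agree at the $t^{1/3}$ scale; the $\Or(1)$ residual is what has to be extracted. The geometric inputs are the localization of the geodesic on scale $t^{2/3}$ for flat data (Theorem~\ref{convergence} for TASEP; for ASEP this would follow from the convergence of the height function to the KPZ fixed point~\cite{QS20} and the $\argmax$ characterization, exactly as in the proof of Theorem~\ref{convergence}) and the fact that along the geodesic, inside the $t^{2/3}$-tube and after a transient, the configuration is locally close to the stationary product measure of density $1/2$; one then needs to show that the total contribution of the anomalous (non-monotone, left-type) index changes stays bounded. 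Making this last step rigorous is, in my view, the main obstacle. A conceptually cleaner alternative would be to match $Z_N(t)$ with the maximum of the last-passage line ensemble of~\cite{ACH24} up to an $\Or(1)$ error and invoke their bounded-discrepancy result, but this presupposes a dictionary between that line ensemble and the ASEP height function which is not presently available.

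Granting tightness, convergence in law should follow from a finite-range-of-dependence argument: since $D_N(t)$ is $\Or(1)$ and the geodesic localizes, with high probability $D_N(t)$ is a function of the Poisson clocks in a space-time window around the geodesic end-point whose relevant content stabilizes as $t\to\infty$ --- once the leading-order relaxation has taken place, an $\Or(1)$ discrepancy can only be created or destroyed within a window of bounded spatial and temporal extent. Coupling the clock environments for two large times $t<t'$ so that these windows coincide yields $|D_N(t)-D_N(t')|\to 0$ in probability, hence a Cauchy-in-law property and convergence to some $G$. Non-degeneracy is then the soft part: one exhibits, uniformly in $t$, a local clock configuration of probability bounded away from $0$ forcing the geodesic to perform a left-suppressed step near its end-point and hence $D_N(t)\ge 1$, together with a second such configuration forcing a different value; as the limit exists, two values each attained with positive liminf-probability give $G$ mass at two distinct points, so $G$ is not a point mass.
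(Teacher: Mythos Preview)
The paper does not prove this statement: it is stated as a \emph{conjecture} and supported only by numerical simulation in Section~\ref{asepdis} (density plots and moment tables for $D_N(t)$ at several times). So there is no proof in the paper to compare against; the question is whether your proposal constitutes a proof at all.

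It does not. You have written an honest and well-organized strategy outline, but every one of the three steps has a genuine gap that you yourself flag. For tightness, the splitting $D_N(t)=(Z_N-Y_N)+(Y_N-X_N)$ is correct and the TASEP intuition is sound, but the crucial claim---that the total contribution of the ``anomalous'' left-suppressed index changes stays $\Or(1)$ uniformly in $t$---is exactly the content of the conjecture, and you provide no mechanism to bound it. The number of such events along the backwards geodesic is a priori of order $t$; you need a cancellation or absorption mechanism, not just localization on the $t^{2/3}$ scale, and nothing in the paper (or in~\cite{QS20,ACH24}) supplies one. For convergence in law, the ``finite-range-of-dependence'' picture is not justified: $D_N(t)$ is built from the entire clock field on $[0,t]$ and from a random index $N(t\downarrow 0)$ that itself moves on scale $t^{2/3}$, so the assertion that it depends only on a bounded window that ``stabilizes'' is precisely what would have to be proved. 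The non-degeneracy sketch is the most plausible piece, but it presupposes the existence of a limit.

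In short, your preliminary $o_{\Pb}(t^{1/3})$ observation is correct and follows from Lemma~\ref{lemScaling} plus the localization argument of Theorem~\ref{convergence} transported to ASEP via~\cite{QS20}; but upgrading $o(t^{1/3})$ to $\Or(1)$ is the whole conjecture, and your proposal does not close that gap. The paper makes no claim to do so either.
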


In Section~\ref{asepdis} we provide numerical evidence of Conjecture~\ref{ConjDiscrepancy}.

	%	\paragraph{Outline.} The rest part of this paper can be essentially divided into two parts:
	%	\begin{enumerate}
		%		\item The first part is about analytic results: in Section~\ref{pl22}, we show the convergence result of backwards index process in the setting of TASEP, i.e., Theorem~\ref{pl22}. The calculation of explicit scaling form mentioned in Conjecture~\ref{asepconj},~\ref{conj} and~\ref{conjp} are given in Section~\ref{kpz}.
		%		\item The second part is about the simulation results: after describing the simulation in Section~\ref{dos}, we present our simulation results on $N(t \downarrow 0)$ in the ASEP setting in Section~\ref{endpointasep}, while the results on the discrepancy defined in \eqref{dnt1} in ASEP are displayed in Section~\ref{discrepancyasep}. The numerical results for \newasep\  are demonstrated in Section~\ref{resultasepsc}.
		%	\end{enumerate}
	
	\section{Analytic results}
	In this section we will show Theorem~\ref{convergence} and calculate the explicit formula mentioned in several conjectures above.
	\subsection{Proof of Theorem~\ref{convergence}}\label{pl22}
	In the whole section, we will assume $X(t)$ is a TASEP with $X_n(0)=-2n$ for all $n\in\Z$. For a fixed $N$, recall that $N(t\downarrow 0)$ is the end-point of the backwards geodesic of particle $X_N$ starting from time $t$ (see Definition~\ref{dbwp}). Define
	\begin{equation}
		\hat u=\argmax_{u\in\R}\{{\cal A}_2(u)-u^2\},
	\end{equation}
	where ${\cal A}_2$ is the $\mathrm{Airy}_2$ process. Since $X_n(0)=-2n$ for all $n\in\Z$, the statement of Theorem~\ref{convergence} is equivalent to $\lim_{t\to\infty}u_t\stackrel{d}{=}\hat u$, where
	\begin{equation}\label{dut}
		u_t=\frac{N-N(t\downarrow0)-t/4}{2^{-2/3}t^{2/3}}.
	\end{equation}
	
	The strategy is as follows. Let $X^{{\rm step}}(t)$ be TASEP with step initial condition, i.e., $X_n(0)=-n$ for all $n\geq 0$ and define the rescaled process
	\begin{equation}\label{e33}
		H_t(x)=\frac{X^{\mathrm{step}}_{\lfloor t/4+2^{-2/3}t^{2/3}x\rfloor}(t)+2^{1/3}t^{2/3}x}{-2^{-1/3}t^{1/3}},\quad\forall x\geq -2^{-4/3}t^{1/3}
	\end{equation}
	and
	\begin{equation}\label{dhatut}
		\hat u_t=\inf\big\{x\geq -2^{-4/3}t^{1/3}\big| H_t(x)=\max_{y} H_t(y)\big\}.
	\end{equation}
	In Lemma~\ref{connect}, we will show that, for any fixed $t>0$,
	\begin{equation}\label{same}
		u_t\stackrel{d}{=}\hat u_t.
	\end{equation}
	It is then enough to show $\hat u_t\to \hat u$ in distribution as $t\to\infty$.	By Proposition~2.9 in \cite{BBF21}, $H_t(u)\to {\cal A}_2(u)-u^2$ weakly on $C([-L,L])$ for any $0<L<\infty$. Together with the tightness of the distribution of $\hat u_t$ (see Lemma~\ref{ltight} below) and the uniqueness of $\argmax_{u\in\R}\{{\cal A}_2(u)-u^2\}$ (see for instance \cite[Theorem 4.3]{CH11}, \cite{MQR13},\cite{P15}), we conclude that $\hat u_t$ converges in distribution to $\hat u$ (see Proposition~\ref{pcvg} below). Together with \eqref{same}, we then obtain Theorem~\ref{convergence}.
	
\subsubsection{Proof of the identity \eqref{same}}
The starting point is the following observation.
	
	\begin{lem}\label{lm21}
		Let $ X(t) $ be an arbitrary TASEP with initial condition $ X(0) $, and  $ X^{\mathrm{step}, X, M} $ be defined as in Definition~\ref{comp}. Then, for any $ t \geq 0 $ and $ N \in \mathbb{Z} $, we have
		\begin{equation}\label{e26}
			N(t \downarrow 0) = \max\{n \leq N | X^{\mathrm{step}, X, n}_{N-n}(t) + X_n(0) = X_N(t)\}
		\end{equation}
see Figure~\ref{figlemma21} for an illustration.
	\end{lem}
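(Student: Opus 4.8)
The plan is to prove the identity \eqref{e26} by directly analyzing what the backwards index process records. Recall that $N(t\downarrow 0)$ is obtained by following particle $N$ backwards in time, and each time the tracked particle has a \emph{suppressed} jump (which for TASEP is always a right-to-left jump, i.e.\ a particle blocked from above by its left neighbour), the label decreases by one. So $N(t\downarrow 0) = N - (\text{number of label decrements along }[0,t])$, and each decrement corresponds to a specific space-time point where the tracked trajectory was forced to follow the blocking particle. The key structural fact is that once the geodesic switches to following particle $m-1$ at some time $s$, for all earlier times $\tau < s$ the value $X_m(\tau)$ equals $X^{\mathrm{step},X,m}_{?}(\tau) + \ldots$ evaluated \emph{as if} the world consisted only of particles with labels $\le m$ frozen at their actual positions — in other words, particles with label $> m$ never mattered for $X_m$ up to time $s$. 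This is precisely the statement that motivates calling these objects geodesics, and it is the engine of the proof.

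First I would establish the ``$\ge$'' direction: show that with $n_\star := N(t\downarrow 0)$ one has $X^{\mathrm{step},X,n_\star}_{N-n_\star}(t) + X_{n_\star}(0) = X_N(t)$. This follows by a downward induction on the switching times $t > t_1 > t_2 > \cdots > t_k \ge 0$ of the index process. Between consecutive switches the tracked particle $X_{N(t\downarrow s)}$ moves freely (all its right jumps to the right succeed up to the point where it is overtaken), and at each switch time $t_j$ the tracked particle is exactly sitting behind its left neighbour, so identifying $X_m(t_j)$ with the corresponding entry of the step process shifted by $m$ is an equality, not merely an inequality. Concatenating these equalities across all the intervals gives the claim; the coupling in Definition~\ref{comp} (particle $i-M$ of the step process uses the clocks of particle $i$ of $X$) is exactly what makes the clocks line up. I would draw the picture in Figure~\ref{figlemma21} to make this bookkeeping transparent.

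Second I would establish ``$\le$'', namely that for no $n$ with $n_\star < n \le N$ does one have $X^{\mathrm{step},X,n}_{N-n}(t) + X_n(0) = X_N(t)$. Here I would use \eqref{tasep}: the first equality in \eqref{tasep} says $X_N(t) = \min_{M \le N}\{X^{\mathrm{step},X,M}_{N-M}(t) + X_M(0)\}$, so every term in the minimum is $\ge X_N(t)$, and it remains to rule out that the minimum is \emph{also} attained at some index strictly larger than $n_\star$. The point is that $n_\star$ is by construction the \emph{smallest} label the geodesic reaches — wait, in the right-to-left convention the geodesic only ever \emph{decreases} its label (blocking particle is to the left, larger label), so $n_\star \le N$ and in fact $n_\star$ is a label the geodesic actually visited at time $0$; for $n_\star < n \le N$ one has strict inequality $X^{\mathrm{step},X,n}_{N-n}(t) + X_n(0) > X_N(t)$ because at the last switch time the previous tracked particle with the larger label had strictly moved on relative to its blocker — equivalently, if $n$ is a label such that the step-process term equals $X_N(t)$, then the geodesic, being defined by following the binding constraints, is forced to pass through label $n$, and hence reaches a label $\le n$ at time $0$ only if $n_\star \le n$, but maximality of the set on the right side of \eqref{e26} combined with the fact that the geodesic stops at $n_\star$ forces $n_\star$ to be exactly the max. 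I would phrase this cleanly as: the set $S = \{n \le N : X^{\mathrm{step},X,n}_{N-n}(t)+X_n(0) = X_N(t)\}$ is nonempty (by \eqref{tasep} it contains $n_\star$ from the first part, or directly contains the minimizer), and I claim $\max S = n_\star$; one inclusion is part one, and the reverse follows because the dynamical construction always ``jumps'' to the binding blocker, so it can never skip over a binding index.

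The main obstacle I anticipate is making the ``$\le$'' direction fully rigorous — specifically, proving that the geodesic cannot overshoot past an index at which the concatenation identity \eqref{e26}'s equality holds. The clean way is probably to prove the stronger statement that for \emph{every} $s \in [0,t]$, $N(t\downarrow s) = \max\{n \le N : X^{\mathrm{step},X,n}_{N-n}(s,t) + X_n(s) = X_N(t)\}$ (with the natural time-$s$ analogue of Definition~\ref{comp}), by backwards induction on $s$ starting from the trivial case $s = t$, and checking that the right-hand side decrements by exactly one precisely at a suppressed right-to-left jump of the currently tracked particle — i.e.\ that the combinatorics of the $\min$ in \eqref{tasep} and the combinatorics of the dynamical switching rule agree step by step. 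The induction step amounts to a short case analysis on whether the tracked particle jumps, is blocked, or nothing happens in $[s, s+\dd s]$, using attractivity (Lemma~\ref{attract}) to control the step processes with different shifts. Everything else is routine bookkeeping with the clock coupling.
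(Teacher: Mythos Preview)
Your outline (show $n_\star := N(t\downarrow 0) \in S$, then show $n_\star = \max S$) is correct, and you rightly flag the second step as the crux. A few points.

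For the first direction, no work is needed: \eqref{tasep} already asserts $X_N(t) = X^{\mathrm{step},X,n_\star}_{N-n_\star}(t) + X_{n_\star}(0)$, so $n_\star \in S$ is immediate. (A minor convention slip: in TASEP particles jump left-to-right, and the blocking particle sits to the \emph{right} in space with \emph{smaller} label $m-1$. Your conclusion that the geodesic label is monotone decreasing is correct, but the stated reason is reversed.)

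For the second direction, your direct argument is not a proof. The sentence ``the dynamical construction always jumps to the binding blocker, so it can never skip over a binding index'' is precisely the thing to be proved: membership in $S$ is a time-$t$ statement about the coupled step processes, whereas the geodesic is driven by \emph{local} blocking events along the way, and the link between the two is the content of the lemma. Your fallback of proving a time-$s$ version by backwards induction on $s$ is a sound strategy in principle, but heavier than needed, and you have not isolated what happens in the case analysis at a suppressed jump.

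The paper's argument is short and supplies the missing mechanism. Fix $M$ with $n_\star < M \le N$. Since the geodesic label decreases monotonically from $N$ to $n_\star < M$, there are switching times $t_N > t_{N-1} > \cdots > t_M$ at which $X_m$ had a suppressed right jump blocked by $X_{m-1}$, for each $m\in[M,N]$. Introduce the auxiliary process $Z$ obtained from $X$ by sending all particles with label $<M$ to $+\infty$, under clock coupling. By monotonicity of the step initial data over $Z(0)$, one has $X^{\mathrm{step},X,M}_{N-M}(t)+X_M(0) \ge Z_N(t)$, so it suffices to show $Z_N(t) \ge X_N(t)+1$. This is done by induction on $m$ from $M$ to $N$: at time $t_M$, particle $Z_M$ has no right neighbour, so its attempted jump \emph{succeeds} while $X_M$'s is blocked, giving $Z_M(t_M^+)\ge X_M(t_M^+)+1$; and at $t_m$, either $Z_m$ is already ahead by one, or else $Z_m(t_m)=X_m(t_m)$ and the inductive bound $Z_{m-1}(t_m)\ge X_{m-1}(t_m)+1 = X_m(t_m)+2$ forces $Z_m$'s jump to succeed. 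The ``$+1$ gained at each suppressed jump along the geodesic, propagated by monotonicity'' is the concrete idea your proposal is missing; once you have it, there is no need for the full induction on $s$.
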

	\begin{figure}[h]
		\centering
		\begin{subfigure}
			\centering
			\includegraphics[width=0.49\textwidth]{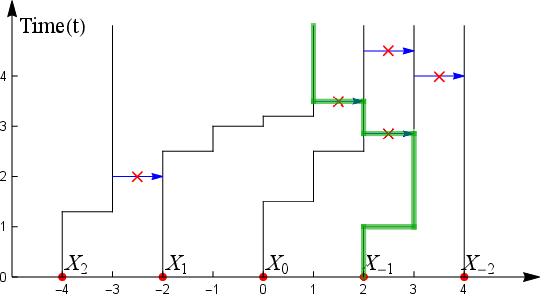}
		\end{subfigure}
		\begin{subfigure}
			\centering
			\includegraphics[width=0.49\textwidth]{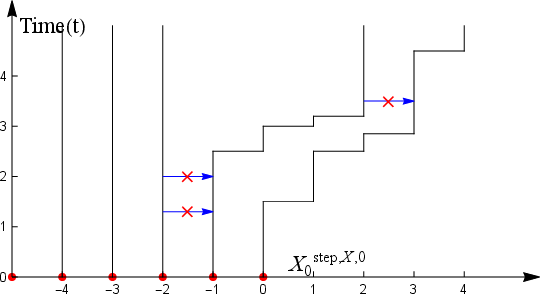}
		\end{subfigure}
		\caption{Illustration for Lemma~\ref{lm21}. On the left panel: from left to right, the black solid lines represent the trajectory of particles $X_2,\ X_1,\ X_0,\ X_{-1}$, and $X_{-2}$ respectively. The green line is the backwards trajectory of particle $X_1$ from time $t$. The end-point of this trajectory is $X_{-1}$. Note that $X_1(t=5)=1$. On the right panel: from right to left, the black solid lines represent the trajectories of particles $X_0^{{\rm step},X,0}$, $X_1^{{\rm step},X,0}$, and so on. In particular, note that $0 > -1 = N(t\downarrow 0)$ and $X_1^{{\rm step},X,0}(t=5) = 2 > 1 = X_1(t=5)$.}\label{figlemma21}
	\end{figure}
	\begin{proof}
		By \eqref{tasep}, $N(t\downarrow0)\in\{n\leq N| X^{{\rm step},X,n}_{N-n}(t)+X_n(0)=X_N(t)\}$. Let $M\in(N(t\downarrow 0),N]$ and we show that
		\begin{equation}
			X_{m-M}^{{\rm step},X,M}(t)+X_M(0)\geq X_m(t)+1,\quad\forall m\in[M,N].
		\end{equation}
		We define a new process $Z(t)$ with initial condition
		\begin{equation}
			Z_n(0)=\begin{cases}
				X_n(0),\quad&{\rm if}\ n\geq M,\\
				\infty,&{\rm otherwise}
			\end{cases}
		\end{equation}
		with $Z_n$ sharing the same Poisson clocks with $X_n$. By monotonicity property (see Lemma~\ref{attract}), we have $X_{m-M}^{{\rm step},X,M}(t)+X_M(0)\geq Z_m(t)$. By definition of backwards geodesic, for each $m\in[M,N]$, there exists
		\begin{equation}
			t_N>t_{N-1}>\cdots >t_M
		\end{equation}
		such that at time $t_m$ particle $X_m$ has a right suppressed jump by the presence of $X_{m-1}$. We claim that
		\begin{equation}
			Z_m(s)\geq X_m(s)+1,\quad\forall m\in[M,N],\ s\in(t_m,t].
		\end{equation}
		We show this via induction on $m$. For $m=M$, since $Z_{M-1}(s)=\infty$ for all $s\geq 0$, the attempted jump at $t_M$ will be a successful one for $Z_M$, hence, we have $Z_M(t_M^+)\geq X_M(t_M^+)+1$. This also implies $Z_M(s)\geq X_M(s)+1$ for all $s>t_M,$ since $Z_M$ and $X_M$ shares the same Poisson clock. Suppose now $Z_{N-1}(s)\geq X_N(s)+1$ for all $s>t_{N-1}$, then $Z_{N-1}(t_N)\geq X_{N-1}(t_N)+1$, thus, the attempted jump at $t_N$ will also be successful for $Z_N$ and hence $Z_N(t_N^+)\geq X_N(t_N^+)+1$. Using monotonicity property once more, we then obtain $Z_N(t)\geq X_N(t)+1$.
	\end{proof}
	
	Applying \eqref{tasep}, $X_n(0)=-2n$ for all $n\in\Z$ and Lemma~\ref{lm21}, we obtain
	\begin{equation}\label{e27}
		N(t\downarrow 0)=\max\left\{n\leq N\,\Big|\,  X^{{\rm step},X,n}_{N-n}(t)-2n=\min_{m\leq N}\left\{X^{{\rm step},X,m}_{N-m}(t)-2m\right\}\right\}.
	\end{equation}

	\begin{lem}\label{connect}
		For fixed $t\geq 0$, it holds $u_t\stackrel{d}{=}\hat u_t$.
	\end{lem}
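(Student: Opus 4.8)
The plan is to prove the distributional identity $u_t\stackrel{d}{=}\hat u_t$ by showing that the discrete argmin appearing in \eqref{e27} has the same law as the continuous argmax $\hat u_t$ defined in \eqref{dhatut}, after matching up the two parametrizations. First I would rewrite \eqref{e27} in terms of the rescaled step-TASEP height function. Setting $n = N - m$ in the minimization and then substituting $m = \lfloor t/4 + 2^{-2/3}t^{2/3}x\rfloor$ (which is exactly the reparametrization used in \eqref{e33}), the quantity $X^{{\rm step},X,m}_{N-m}(t) - 2m$ becomes, up to an affine function of $t$ and $N$ that does not depend on $m$, a negative multiple of $H_t(x)$. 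Concretely, one checks that $\min_{m\le N}\{X^{{\rm step},X,m}_{N-m}(t)-2m\}$ is attained at the same index as $\max_x H_t(x)$, and that the \emph{largest} index $n\le N$ achieving the minimum in \eqref{e27} corresponds to the \emph{smallest} $x$ achieving the maximum in \eqref{dhatut} — this is why the $\max$ in \eqref{e26}/\eqref{e27} is paired with the $\inf$ in \eqref{dhatut}. Tracking the affine shift through the definitions \eqref{dut} and \eqref{dhatut} should produce exactly $u_t = \hat u_t$ on this event, i.e. as an almost-sure identity for the process $X^{{\rm step},X,\cdot}$ built from $X$.

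The one subtlety is the distributional (rather than almost sure) nature of the claim: in \eqref{e27} the family $\{X^{{\rm step},X,m}\}_{m}$ is coupled to $X$ via the clock-shift in Definition~\ref{comp}, whereas $H_t$ in \eqref{e33} is built from a single step-TASEP $X^{\rm step}$ with its own clocks. So the second step is to argue that, for fixed $t$, the law of $m\mapsto X^{{\rm step},X,m}_{N-m}(t)$ coincides with the law of $m \mapsto X^{\rm step}_{N-m}(t)$ for a genuine step-TASEP. This is where translation invariance of the Poisson clock family enters: shifting all labels by a fixed integer is a measure-preserving operation on the clock space, so $X^{{\rm step},X,m}$ for varying $m$ has, jointly in $m$, the same distribution as the label-shifted copies of one fixed step process — and the minimizing index (being a function of these joint values) therefore has the claimed law. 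I would state this as a lemma about the joint law of the shifted step processes and then read off $u_t \stackrel{d}{=} \hat u_t$.

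The main obstacle I anticipate is bookkeeping rather than conceptual: getting every floor function, every factor of $2^{1/3}$, and the direction of the two extremizers to line up so that the affine pieces cancel and the $\argmax$/$\argmin$ genuinely match (including the boundary constraint $x \ge -2^{-4/3}t^{1/3}$, which corresponds to $n \le N$). One should be careful that $H_t$ is defined with a floor inside the subscript, so the reparametrized minimization is over a discrete set of $x$-values, and the identity $u_t = \hat u_t$ holds because both sides pick out the same lattice point; the continuum limit is not needed here, only for the subsequent convergence argument. A secondary point to handle cleanly is that \eqref{e27} requires the TASEP identity \eqref{tasep}, which already holds almost surely, so no extra input is needed there — the lemma is essentially \eqref{e27} plus a change of variables plus clock-shift invariance.
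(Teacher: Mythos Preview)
Your approach is essentially identical to the paper's: reduce by spatial homogeneity to $N=t/4$, then reparametrize \eqref{e27} so that the \emph{largest} minimizing index $n\le N$ becomes the \emph{smallest} maximizing $x$ in the definition \eqref{dhatut} of $\hat u_t$, with the $\stackrel{d}{=}$ step coming from the fact that each clock-shifted process $X^{{\rm step},X,M}$ has the law of a single step-TASEP. You are right that the delicate point is the \emph{joint-in-$M$} distributional identity between $(X^{{\rm step},X,M}_{N-M}(t))_{M}$ and $(X^{\rm step}_{N-M}(t))_{M}$, and you sensibly propose to isolate it as a lemma; note however that ``shifting all labels by a fixed integer is measure-preserving'' only yields the marginal identity for each fixed $M$, not the joint one --- the paper's proof invokes exactly this marginal fact and is equally terse at this step, so your proposal matches the paper's level of detail rather than falling short of it.
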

	\begin{proof}
		Choose a fixed $t\geq 0$. By the spatial homogeneity, we only need to show the case for $N=\tfrac t4$. Define now $n(x)=2^{-2/3}t^{2/3}x$, $Z_m^n=X^{{\rm step},X,n}_m(t)$, and
		\begin{equation}
			W(t)=\min_{x\geq-2^{-4/3}t^{1/3}} \left\{Z^{{n(x)}}_{t/4+n(x)}(t)+2n(x) \right\}.
		\end{equation}
		Then by \eqref{e27}, we have
		\begin{equation}\label{e218}
			\begin{aligned}
				&N(t\downarrow 0)=\max\left\{n\leq t/4\,\Big|\,  Z^{n}_{t/4-n}(t)-2n=\min_{m\leq t/4} \left\{Z^{m}_{t/4-m}(t)-2m \right\} \right\}\\
				=&-2^{-2/3}t^{2/3}\min\left\{x\geq-2^{-4/3}t^{1/3} \,\Big|\,  Z^{{n(x)}}_{t/4+n(x)}(t)+2n(x)=W(t)\right\}\\
				\stackrel{d}{=}&-2^{-2/3}t^{2/3}\inf\left\{x\geq -2^{-4/3}t^{1/3}| H_t(x)=\max H_t(y)\right\}=-2^{-2/3}t^{2/3}\hat u_t,
			\end{aligned}
		\end{equation}
		where in the second last step we use the fact that $X^{{\rm step}}$ and $X^{{\rm step}, X,M}$ have the same distribution for all $M\leq N$.
	\end{proof}
	
	\subsubsection{Localization}
	Finally we need to show $\lim_{t\to\infty}\hat u_t\stackrel{d}{=}\hat u$. First, we establish the following tightness result for $\hat u_t$:
	\begin{lem}\label{ltight}
		For any $L>0$, there exists $T=T(L)>0$ such that
		\begin{equation}
			\Pb\left(|\hat u_t|> L\right)\leq Ce^{-cL}\quad\forall t\geq T(L)
		\end{equation}
for constants $C,c>0$ independent on $L$.
\begin{rem}
A similar result was previously obtained in the context of LPP; see (4.21) of \cite{FO17}. However, additional steps are required to translate this result into the setting of TASEP. Therefore, we provide a direct proof here, without relying on the connection to LPP.

\end{rem}

	\end{lem}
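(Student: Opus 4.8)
The plan is to reduce the claim to one-point tail bounds for the step-initial-condition height profile $H_t$ plus a global ``parabolic escape'' estimate, and then to establish the latter by a dyadic chaining argument against the curvature of the hydrodynamic limit shape.

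\emph{Reduction.} First I would note that $0$ always lies in the admissible range $[-2^{-4/3}t^{1/3},L]$, so if $|\hat u_t|>L$ then $\max_y H_t(y)$ is attained at some $x^\star$ with $|x^\star|>L$, and in particular $H_t(x^\star)\ge H_t(0)$. Hence
\[
\{|\hat u_t|>L\}\ \subseteq\ \{H_t(0)<-L^2/2\}\ \cup\ \{\exists\,x\ \text{with}\ |x|>L:\ H_t(x)\ge -L^2/2\}.
\]
Since $H_t(0)=X^{\mathrm{step}}_{\lfloor t/4\rfloor}(t)/(-2^{-1/3}t^{1/3})$ and the macroscopic position of particle $\lfloor t/4\rfloor$ is $o(t^{1/3})$, the event $\{H_t(0)<-L^2/2\}$ is, up to an $O(1)$ shift, the upper-tail event $\{X^{\mathrm{step}}_{\lfloor t/4\rfloor}(t)\ge c\,t^{1/3}L^2\}$; the classical Tracy--Widom GUE moderate-deviation bound for step-initial-condition TASEP, which holds uniformly for $t$ large, gives for it probability $\le Ce^{-cL^{3}}$.

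\emph{Parabolic escape.} For the second event I would use that the definition of $H_t$ is calibrated to the (strictly concave) hydrodynamic profile $\rho(v)=(1-v)/2$ of the step initial condition: a Taylor expansion of the limit shape around the relevant point yields $H_t(x)\le -c_0 x^2+\bar H_t(x)$ for all admissible $x$, with a fixed $c_0\in(0,1)$ and $\bar H_t$ the centred fluctuation field (the bound being sharp with $c_0$ close to $1$ for $|x|\lesssim\varepsilon t^{1/3}$, and crude but still valid — indeed $H_t(x)\to-\infty$ deterministically — for $|x|$ of order $t^{1/3}$ and larger, so that only $O(\log t)$ of the dyadic annuli below are non-empty). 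Then $H_t(x)\ge -L^2/2$ forces $\bar H_t(x)\ge c_0 x^2-L^2/2$, which on the annulus $2^kL<|x|\le 2^{k+1}L$ is $\gtrsim 2^{2k}L^2$. On each annulus I would cover by an $O(1)$-spaced net — crucially coarse, \emph{not} at the lattice scale $t^{-2/3}$ — of cardinality $O(2^kL)$, bound $\Pb(\bar H_t(x)\ge c\,2^{2k}L^2)\le Ce^{-c(2^{2k}L^2)^{3/2}}=Ce^{-c\,2^{3k}L^3}$ at each net point via the one-point upper tail, and control the oscillation of $\bar H_t$ between net points by a pre-limit modulus-of-continuity estimate with the same $e^{-c\lambda^{3/2}}$ tail; summing the $O(2^kL)$ points and then over $k\ge 0$ gives a geometric series dominated by $k=0$, of total size $\le CLe^{-cL^{3}}\le Ce^{-cL}$. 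Combined with the reduction this proves the lemma.

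\emph{Main obstacle.} The hard part is not the combinatorics above but the two \emph{uniform-in-$t$} analytic inputs to the escape step: the one-point upper tail $\Pb(\bar H_t(x)\ge r)\le Ce^{-cr^{3/2}}$ and the increment bound $\Pb(\sup_{|x-y|\le 1}|\bar H_t(x)-\bar H_t(y)|\ge\lambda)\le Ce^{-c\lambda^{3/2}}$, both needed uniformly over the whole admissible range of $x$ — in particular across the right edge of the rarefaction fan, where $n(x)$ is of order $1$ and $X^{\mathrm{step}}_{n(x)}(t)$ is the front particle of the step-initial-condition TASEP. These are the TASEP-side analogues of the LPP estimates behind (4.21) of \cite{FO17}; re-deriving them directly for $X^{\mathrm{step}}_n(t)$ from the Fredholm-determinant asymptotics, rather than passing through the LPP coupling, is where the ``additional steps'' sit.
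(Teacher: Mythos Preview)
Your reduction to $\{H_t(0)<-cL^2\}\cup\{\exists\,|x|>L:H_t(x)\ge -cL^2\}$ and the treatment of the first event via the one-point upper tail match the paper exactly (the paper takes $A=-2^{-3}L^2$). The divergence is in the escape event. You propose dyadic annuli, an $O(1)$-spaced net, and a modulus-of-continuity bound to interpolate; the paper instead exploits TASEP's variational structure to avoid increment control altogether. For $u\le -L$ it observes, via~\eqref{tasep} applied to half-flat initial data, that the minimum $\min_{n\in[2^{-2/3}Lt^{2/3},\,t/4]}\{X^{\mathrm{step}}_{t/4-n}(t)-2n\}$ is equal in law to a \emph{single} random variable, namely $X^{\mathrm{hf}}_{t/4-2^{-2/3}Lt^{2/3}}(t)-2^{1/3}Lt^{2/3}$, whose one-point tail is then read off from~\cite{CFS16}. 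For $u\ge L$ it splits the range in two: the bulk $n\in[-t/4,-2^{-2/3}t^{2/3}L]$ is handled by citing the localization estimate~(4.53) of~\cite{FG24}, and the edge $n\in[-t/4-L^2t^{1/3},-t/4]$ by a brute-force union bound over all $O(L^2t^{1/3})$ lattice indices, each carrying probability $\le Ce^{-c\,t^{2/3}}$ since the parabolic deficit there is macroscopic. Your route is sound and more portable to models lacking a variational formula, but you trade the paper's structural shortcuts for the uniform one-point and increment bounds you correctly flag as the main obstacle; the paper's argument is shorter precisely because the half-flat coupling collapses a supremum over $O(t^{2/3})$ indices to a single tagged particle, so no continuity estimate is ever needed.
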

	The proof is postponed to the end of this section. Lemma~\ref{ltight} direcly implies
	\begin{prop}\label{pcvg}
		Let $\hat u_t$ be defined as \eqref{dhatut}, then
		\begin{equation}\label{c}
			\lim_{t\to\infty}\hat u_t\stackrel{d}{=}\hat u.
		\end{equation}
	\end{prop}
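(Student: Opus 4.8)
The plan is to deduce Proposition~\ref{pcvg} from the weak convergence $H_t \to {\cal A}_2(\cdot) - (\cdot)^2$ on compacts, the tightness Lemma~\ref{ltight}, and the a.s.\ uniqueness of the argmax of ${\cal A}_2(u)-u^2$, via a standard argmax-continuity argument. First I would fix $L>0$ and work on the event $\{|\hat u_t|\le L\}$, which by Lemma~\ref{ltight} has probability at least $1-Ce^{-cL}$ for $t\ge T(L)$. On this event the infimum defining $\hat u_t$ is attained in $[-L,L]$, and moreover $\hat u_t$ is a location of the maximum of $H_t$ over the \emph{whole} half-line, hence in particular over $[-L,L]$; so it suffices to control $\argmax_{[-L,L]} H_t$. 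By Proposition~2.9 of \cite{BBF21} (as quoted in the excerpt), $H_t \Rightarrow {\cal A}_2(\cdot)-(\cdot)^2$ weakly in $C([-L,L])$, and the map $f \mapsto \sargmax_{[-L,L]} f$ (smallest maximizer) is continuous at every $f$ whose maximizer on $[-L,L]$ is unique; since ${\cal A}_2(u)-u^2$ has a unique maximizer on all of $\R$ a.s.\ (by \cite[Theorem 4.3]{CH11}), it has a unique maximizer on $[-L,L]$ a.s., so the continuous-mapping theorem gives $\sargmax_{[-L,L]} H_t \Rightarrow \sargmax_{[-L,L]}({\cal A}_2-(\cdot)^2)$.

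Next I would assemble these pieces into convergence of $\hat u_t$ itself. Let $\hat u^{(L)} := \sargmax_{u\in[-L,L]}\{{\cal A}_2(u)-u^2\}$. By the uniqueness of the global argmax and the fact that ${\cal A}_2(u)-u^2 \to -\infty$ a.s.\ as $|u|\to\infty$, we have $\hat u^{(L)} = \hat u$ for all $L$ large enough (depending on the realization), so $\hat u^{(L)} \to \hat u$ a.s.\ as $L\to\infty$, and also $\Pb(|\hat u|\le L)\to 1$. Now for a bounded continuous test function $g$: on $\{|\hat u_t|\le L\}$ we have $\hat u_t = \sargmax_{[-L,L]} H_t$, so
\begin{equation}
\big|\E[g(\hat u_t)] - \E[g(\sargmax_{[-L,L]} H_t)]\big| \le 2\|g\|_\infty\, \Pb(|\hat u_t|>L) \le 2\|g\|_\infty\, Ce^{-cL}
\end{equation}
for $t\ge T(L)$. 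Letting $t\to\infty$ and using the weak convergence of $\sargmax_{[-L,L]} H_t$,
\begin{equation}
\limsup_{t\to\infty}\big|\E[g(\hat u_t)] - \E[g(\hat u^{(L)})]\big| \le 2\|g\|_\infty\, Ce^{-cL},
\end{equation}
and then letting $L\to\infty$ (dominated convergence for $\E[g(\hat u^{(L)})]\to\E[g(\hat u)]$) yields $\E[g(\hat u_t)]\to\E[g(\hat u)]$, i.e.\ $\hat u_t \Rightarrow \hat u$. Combined with Lemma~\ref{connect}, this is Theorem~\ref{convergence}.

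Two technical points deserve care. The first is making the continuity of the restricted smallest-argmax functional precise: if $f_n \to f$ uniformly on $[-L,L]$ and $f$ has a unique maximizer $x^\ast$ there, then any maximizer of $f_n$ converges to $x^\ast$ (standard: if not, extract a subsequence of maximizers converging to some $x'\ne x^\ast$, pass to the limit in $f_n(x_n)\ge f_n(x^\ast)$ to get $f(x')\ge f(x^\ast)$, contradicting uniqueness), hence in particular $\sargmax_{[-L,L]} f_n \to x^\ast$; the null set where ${\cal A}_2-(\cdot)^2$ fails uniqueness on $[-L,L]$ is contained in the (null) set where global uniqueness fails. The second is the boundary effect: one must check that the global maximizer of $H_t$ does not escape to the region $x < -2^{-4/3}t^{1/3}$ or to $+\infty$ in a way not captured by the compact-window convergence — but this is exactly what the tightness Lemma~\ref{ltight} rules out, since it bounds $\Pb(|\hat u_t|>L)$ uniformly for $t\ge T(L)$, and $\hat u_t$ by its definition \eqref{dhatut} is a genuine global maximizer location. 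The main obstacle is thus not in this proposition at all — the argmax-continuity argument is routine once its inputs are in hand — but in establishing Lemma~\ref{ltight}, i.e.\ the uniform-in-$t$ tail bound on $\hat u_t$, which requires genuine input about the transversal fluctuations / curvature of the step-TASEP height function at the process level rather than just at a single point; here one would typically combine one-point upper-tail bounds for $H_t(x)$ at $x=\pm L$ with a lower bound for $\max_{|y|\le \delta} H_t(y)$ and a parabolic-decay estimate, uniformly in $t$.
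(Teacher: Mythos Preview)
Your proposal is correct and follows essentially the same route as the paper's proof: both restrict to $[-L,L]$, invoke the weak convergence of $H_t$ on compacts together with continuity of the smallest-argmax functional at paths with a unique maximizer (continuous mapping theorem), use Lemma~\ref{ltight} to control the probability that $\hat u_t$ escapes $[-L,L]$, and then let $t\to\infty$ followed by $L\to\infty$. The only cosmetic difference is that you phrase the final step with bounded continuous test functions, whereas the paper writes the triangle inequality directly for the distribution functions $\Pb(\hat u_t\le x)$; the content is identical.
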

		\begin{proof}
		We define the functional $\sargmax$ \cite{Jo03b} on $C([-L, L])$ (without dependence on $L$ for notation) as:
		\begin{equation}
			\sargmax_{|x|\leq L} f(x)=\inf\Big\{u\geq -L\,\big|\, \max_{x\in [-L,u]} f(x)=\max_{x\in[-L,L]}f(x)\Big\}.
		\end{equation}
		Additionally,
		\begin{equation}
			\hat u_t^L=\sargmax_{|x|\leq L} H_t(x),\quad\textrm{and}\quad \hat u^L=\sargmax_{|x|\leq L} \left({\cal A}_2(u)-u^2\right).
		\end{equation}
		By the triangle inequality we obtain		
		\begin{equation}\label{triangle}
			\begin{aligned}
				&\left|\Pb\left(u_t \leq x\right)-\Pb(\hat{u} \leq x)\right| \\
				\leq& \left|\Pb\left(u_t^L \leq x\right)-\Pb\left(\hat{u}^L \leq x\right)\right| + \Pb\left(\hat{u}^L \neq \hat{u}\right) + \Pb\left(u_t \neq u_t^L\right).
			\end{aligned}
		\end{equation}
		Since the maximizer of $\mathcal{A}_2(u) - u^2$ is unique, it is a continuous point of the functional $\sargmax$. Thus, by weak convergence, the first term in the right-hand side of \eqref{triangle} converges to $0$ as $t\to\infty$. Moreover, \cite{Jo03b} shows that $\Pb\left(\hat{u} \neq \hat{u}^L\right) \leq \epsilon$ for large $L$. By Lemma~\ref{ltight}, the third probability is also arbitrarily small for sufficiently large $L$. Thus taking first $t\to\infty$ and then $L\to\infty$ we get that $|\eqref{triangle}|\leq 2\e$ for any $\e>0$, which concludes the proof.		
	\end{proof}
	We adapt the method for Proposition 1.4 in \cite{Jo03b} to prove Lemma~\ref{ltight}. Recall $H_t(u)$ defined in\footnote{We drop the 'step' in the notation in below.} \eqref{e33}. We define
	\begin{equation}
		S_t^\infty=\max_{u\geq 2^{-4/3}t^{1/3}}H_t(u),\quad S_t^L=\max_{u\in[-L,L]}H_t(u).
	\end{equation}
%Since \note{It is true? Maybe some strict inequalities somewhere?}
%\begin{equation}
%\Pb(|\hat u_t|\geq L)\leq \Pb(S_t^\infty\neq S_t^L),
%\end{equation}
%then Lemma~\ref{ltight} follows by the following result.

	\begin{lem}
		For any $L>0$, there exists $T=T(L)>0$ such that
		\begin{equation}
			\Pb\left(S_t^\infty\neq S_t^L\right)\leq Ce^{-cL}\quad\forall t\geq T(L).
		\end{equation}
	\end{lem}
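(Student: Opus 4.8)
The plan is to adapt the dyadic-block localisation argument of Johansson's Proposition~1.4 in \cite{Jo03b}, but to run the mesh on each block with a mesh size \emph{proportional to the block length} (hence independent of $t$), which is precisely what makes all estimates uniform in $t$. Fix $\ell=\ell(L)$ with $L^{1/3}\le\ell\le\tfrac14 L^2$, say $\ell=L$. Since $S_t^\infty\ge S_t^L\ge H_t(0)$,
\[
\Pb\big(S_t^\infty\neq S_t^L\big)\ \le\ \Pb\big(H_t(0)\le-\ell\big)\ +\ \Pb\big(\exists\,u\ \text{with}\ |u|>L,\ u\ge-2^{-4/3}t^{1/3},\ H_t(u)\ge-\ell\big).
\]
The first term is bounded by the lower-tail estimate of the one-point distribution of step TASEP: for $t\ge T_0(L)$ it behaves like the Tracy--Widom-GUE lower tail, so it is $\le Ce^{-c\ell^3}\le Ce^{-cL}$. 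The whole work is in the second term.

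For the second term I would split the range of $u$ into regimes. \textbf{Far regime.} Using only the deterministic bound $X^{\mathrm{step}}_m(t)\ge-m$ in \eqref{e33} one checks that $H_t(u)\le-\ell$ for every $u\ge 2^{-4/3}t^{1/3}+\ell\,2^{1/3}t^{-1/3}$, once $t\ge T_1(L)$; hence only $u$ below this threshold can contribute and the positive half-line is effectively truncated at $\approx 2^{-4/3}t^{1/3}$. \textbf{Intermediate regime} $\delta_0 t^{1/3}\le|u|\le 2^{-4/3}t^{1/3}$, with $\delta_0$ a small fixed constant: here the hydrodynamic profile gives $\E H_t(u)\le-c(\delta_0)\,t^{2/3}$, so the event $\{H_t(u)\ge-\ell\}$ forces the particle with index $\lfloor t/4+2^{-2/3}t^{2/3}u\rfloor$ to deviate from its typical position by an amount of order $t$; a Poissonian large-deviation bound (e.g.\ $X^{\mathrm{step}}_m(t)+m$ is stochastically dominated by a $\mathrm{Poisson}(t)$ variable) gives probability $\le e^{-c(\delta_0)t}$ for each such particle, and a union bound over the $O(t)$ particles in this range yields $\le t\,e^{-c(\delta_0)t}\le e^{-cL}$ for $t\ge T_2(L)$.

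\textbf{Parabolic regime} $L\le|u|\le\delta_0 t^{1/3}$: decompose into dyadic blocks $I_k=[2^kL,2^{k+1}L]$ and $-I_k$, $k\ge0$. On such a block one has $\E H_t(u)\le-\tfrac34 u^2$ (uniform parabolic approximation of the mean curve), whence
\[
\big\{\textstyle\sup_{u\in I_k}H_t(u)\ge-\ell\big\}\ \subseteq\ \big\{\textstyle\sup_{u\in I_k}\big(H_t(u)-\E H_t(u)\big)\ge\tfrac12(2^kL)^2\big\}.
\]
Cover $I_k$ by $32$ points $u_j$ at spacing $\Delta_k=2^kL/32$. The oscillation of $\E H_t$ over a sub-interval of length $\Delta_k$ is at most $2\cdot 2^{k+1}L\cdot\Delta_k\le\tfrac18(2^kL)^2$; by a sub-Gaussian modulus-of-continuity estimate $\Pb\big(\mathrm{osc}_{[u_j,u_{j+1}]}(H_t-\E H_t)\ge r\big)\le Ce^{-cr^2/\Delta_k}$ the oscillation contributes $\le C e^{-c(2^kL)^3}$; and the remaining $32$ one-point events $\Pb\big(H_t(u_j)-\E H_t(u_j)\ge\tfrac14(2^kL)^2\big)\le C e^{-c(2^kL)^3}$ are controlled by the uniform one-point upper tail. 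Hence each parabolic block contributes $\le C e^{-c(2^kL)^3}$, and $\sum_{k\ge0}C e^{-c(2^kL)^3}\le C'e^{-cL^3}\le C'e^{-cL}$; the same for $-I_k$. Summing the three regimes and taking $T(L)=\max\{T_0(L),T_1(L),T_2(L),t_0\}$ yields the claim.

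The dyadic bookkeeping and the choices of $\ell,\delta_0$ are routine; the crux — and, I expect, the only genuinely technical part — is to have, with constants uniform in $t\ge t_0$ \emph{and uniform in $u$ over an entire $t^{1/3}$-window}: (a) the Tracy--Widom-type bounds $\Pb\big(H_t(u)-\E H_t(u)\ge s\big)\le C e^{-cs^{3/2}}$ and $\Pb\big(H_t(u)-\E H_t(u)\le-s\big)\le C e^{-cs^{3}}$; (b) the sub-Gaussian modulus-of-continuity estimate for $H_t-\E H_t$ on scales between $1$ and $\delta_0 t^{1/3}$; and (c) the bounds $\E H_t(u)=-u^2+o(u^2)$ for $|u|\le\delta_0 t^{1/3}$ and $\E H_t(u)\le-c(\delta_0)t^{2/3}$ beyond it. These pre-limit fluctuation estimates for step TASEP can be obtained from the exact Fredholm-determinant formula for the one-point law, together with two-point and monotone-coupling (basic/clock coupling) comparisons — this is the ``direct proof, without relying on the connection to LPP'' alluded to after the statement — and it is their precise uniform formulation, rather than the block argument itself, that requires care.
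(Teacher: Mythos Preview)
Your overall strategy --- fix a level, control $\{H_t(0)\le -\ell\}$ by the one-point lower tail, then localise the supremum outside $[-L,L]$ --- is the same as the paper's. The execution, however, is genuinely different, and one of your claimed ingredients is problematic.

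\textbf{Where the paper differs.} The paper takes $A=-2^{-3}L^2$ (not $-L$) and, crucially, does \emph{not} run a dyadic mesh argument. On the side $u\le -L$ it observes that, by the variational formula \eqref{tasep},
\[
\min_{n\in[2^{-2/3}Lt^{2/3},\,t/4]}\big\{X^{\mathrm{step}}_{t/4-n}(t)-2n\big\}
\ \stackrel{d}{=}\ X^{\mathrm{hf}}_{t/4-2^{-2/3}Lt^{2/3}}(t)-2^{1/3}Lt^{2/3},
\]
i.e.\ the supremum of $H_t$ over the whole interval equals (in law) a \emph{single} half-flat particle position, for which a one-point tail bound from \cite{CFS16} immediately gives $Ce^{-cL}$. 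On the side $u\ge L$ it splits at $n=t/4$; the inner range is handled by a ready-made localisation estimate (4.53) of \cite{FG24}, and the outer range by a crude union bound over $O(L^2t^{1/3})$ indices with one-point tails of order $e^{-ct^{2/3}}$. No modulus-of-continuity input is used anywhere.

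\textbf{The gap in your argument.} Your parabolic-regime step relies on
\[
\Pb\Big(\operatorname{osc}_{[u_j,u_{j+1}]}(H_t-\E H_t)\ge r\Big)\le Ce^{-cr^2/\Delta_k}
\]
uniformly in $t$, for mesh sizes $\Delta_k=2^kL/32$ ranging all the way up to $\delta_0 t^{1/3}$. On that scale two values of $H_t$ are essentially decorrelated (the spatial separation in original units is of order $t$), so a Brownian-type sub-Gaussian bound with variance proxy $\Delta_k$ is not something one can simply quote; establishing it directly from determinantal one- and two-point formulas would be substantial work and is, to my knowledge, not in the literature in that form. You flag this as ``the genuinely technical part'', but it is really the whole difficulty, and the paper's route sidesteps it entirely. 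A simple repair is to drop the $32$-point mesh and take \emph{all} integer indices in $I_k$: there are at most $C\,2^kL\,t^{2/3}\le C(2^kL)^3/\delta_0^2$ of them (since $2^kL\le\delta_0 t^{1/3}$), and the uniform one-point upper tail $Ce^{-c(2^kL)^3}$ then absorbs the polynomial prefactor. With that modification your scheme goes through, but it is still heavier than the half-flat coupling the paper uses.
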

	\begin{proof}
		Choose $\varepsilon>0$.	For arbitrary $A\in\R$, we have
		\begin{equation}
			\left\{H_t(0)>A\right\}\cap\Big\{\max_{u\not\in[-L,L]}H_t(u)< A\Big\}\subset\{S_t^\infty=S_t^L\}.
		\end{equation}
		Thus,
		\begin{equation}\label{e14}
			\Pb\left(S_t^\infty\neq S_t^L\right)\leq \Pb\left(H_t(0)\leq A\right)+\Pb\left(\max_{u\not\in[-L,L]}H_t(u)\geq A\right).
		\end{equation}
		Choosing $A=-2^{-3}L^2$, we have\footnote{Combining Proposition B.1 and (56) in \cite{BFP12}, one obtains the upper bound for LPP in point-to-point setting, i.e., $\Pb\left(L_{\ell}^{\mathrm{resc}} \leq s\right) \leq C e^{-c|s|^{3 / 2}},$ where $L_{\ell}^{\mathrm{resc}}$ is the rescaled last passage time from $(0,0)$ to $(\ell,\ell)$. Using the fact that the maximal last passage time from line $\mathcal{L}=\{(k,-k), k \in \mathbb{Z}\}$ to $(\ell,\ell)$ is larger than the one from $(0,0)$ to $(\ell,\ell)$, one also obtain the upper bound for LPP in point-to-line setting, see also Proposition B.1 (c) in \cite{FO22}.}
		\begin{equation}\label{e15}
			\Pb\left(H_t(0)\leq A\right)=\Pb\left(X_{t/4}\geq  2^{-3}L^2t^{1/3}\right)\leq  Ce^{-c L^3}.
		\end{equation}
		We can bound the second term on the right hand side of \eqref{e14} as
		\begin{equation}\label{e16}
			\Pb\left(\max_{u\not\in[-L,L]}H_t(u)\geq A\right)\leq \underbrace{\Pb\left(\max_{u\geq L}H_t(u)\geq A\right)}_{=:\operatorname{I}}+\underbrace{\Pb\left(\max_{u\in[ 2^{-4/3}t^{1/3},-L]}H_t(u)\geq A\right)}_{=:\operatorname{II}}.
		\end{equation}
		It remains to control the two terms on the right hand side.
		Using definition of $H_t(x)$, we have
		\begin{equation}\label{e17}
			\begin{aligned}
				&\operatorname{I}=\Pb\left(\min_{n\leq -2^{-2/3}t^{2/3}L}\{X_{t/4-n}(t)-2n\}\leq2^{-10/3} t^{1/3}L^2\right)\\
				=&\Pb\bigg(\min_{n\in[-\frac t4-L^2t^{1/3},-2^{-2/3}t^{2/3}L] }\{X_{t/4-n}(t)-2n\}\leq2^{-10/3} t^{1/3}L^2\bigg)\\
				\leq&\Pb\bigg(\min_{n\in[-\frac t4-L^2t^{1/3},-\frac t4] }\{X_{t/4-n}(t)-2n\}\leq 2^{-10/3}t^{1/3}L^2\bigg)\\
				+&\Pb\bigg(\min_{n\in[-\frac t4,-2^{-2/3}t^{2/3}L] }\{X_{t/4-n}(t)-2n\}\leq2^{-10/3} t^{1/3}L^2\bigg),
			\end{aligned}
		\end{equation}
		where the second equality follows from
		\begin{equation}
			\begin{aligned}
				&X_{t/4-n}(t)-2n\geq -n-\frac t4\geq L^2t^{1/3}
			\end{aligned}
		\end{equation}
		for all $n\leq -\frac t4-L^2t^{1/3}$ and $t\geq 0$, since $X(t)$ is TASEP with step initial condition. Setting $\gamma=\frac14$, $T=2t$ and $K=2^{-7/3}L$ in  (4.53) of \cite{FG24}, one obtains that the probability on the last line of \eqref{e17} is smaller than $Ce^{-cL}$. Note that
		\begin{equation}
			2^{-10/3}L^2t^{1/3}-2n\leq (1-2\sqrt{1/4+nt^{-1}})t-(n^2t^{-4/3}-\alpha^2)t^{1/3}
		\end{equation}
		for all $ n\in[Lt^{2/3},t/4+L^2t^{1/3}].$ The term on the second last line of \eqref{e17} is bounded by
		\begin{equation}\label{e111}
			\begin{aligned}
				&\sum_{n=t/4}^{t/4+L^2t^{1/3}}\Pb\left(X_{t/4+n}(t)\leq (1-2\sqrt{1/4+nt^{-1}})t-(n^2t^{-4/3}-L^2)t^{1/3}\right)\\
				\leq& CL^2t^{1/3}e^{-c(t^{2/3}/16-L^2)}\leq C e^{-c L^2}
			\end{aligned}
		\end{equation}
		for all $t\geq (8L)^3$. Thus $\operatorname{I}\leq Ce^{-cL}$ for some constants $C,c>0$. It remains to control $\operatorname{II}.$	Applying definition of $H_t(x)$, we have
		\begin{equation}
			\begin{aligned}
				&\operatorname{II}=\Pb\left(\min_{u\in[ 2^{-2/3}Lt^{2/3},\frac t4]}\left\{X_{t/4-n}(t)-2n\right\}\leq L^2t^{1/3}\right).
			\end{aligned}
		\end{equation}
		Let $X^{{\rm hf}}(t)$ be TASEP with half flat initial condition, i.e.,
		\begin{equation}
			X^{{\rm hf}}_n(0)=\begin{cases}
				-2n,\quad&\mathrm{if}\ n\geq 0,\\
				\infty,&\mathrm{otherwise}.
			\end{cases}
		\end{equation}
		By \eqref{tasep}, we have
		\begin{equation}
			\begin{aligned}
				&X^{{\rm hf}}_{t/4-2^{-2/3}Lt^{2/3}}(t)=\min_{M\in[0,t/4-2^{-2/3}Lt^{2/3}]}\left\{X^{{\rm step},X^{{\rm hf}},M}_{t/4-2^{-2/3}Lt^{2/3}-M}(t)-2M\right\}\\
				=&\min_{n\in[ 2^{-2/3}Lt^{2/3},\frac t4]}\left\{X^{{\rm step},X^{{\rm hf}},M}_{t/4-n}(t)-2n\right\}-2^{1/3}Lt^{2/3}.
			\end{aligned}
		\end{equation}
		In other words, there exists a coupling between $X(t)$ and $X^{{\rm hf}}(t)$ such that
		\begin{equation}
			\min_{n\in[ 2^{-2/3}Lt^{2/3},\frac t4]}\left\{X_{t/4-n}(t)-2n\right\}=X^{{\rm hf}}_{t/4-2^{-2/3}Lt^{2/3}}(t)-2^{1/3}Lt^{2/3}.
		\end{equation}
		Then we have\footnote{This comes from the proof of Theorem 2.6 in \cite{CFS16}. To see this, we set $\ell=t$, $s=0$ and $kM=2^{2/3}\alpha$, then (2.33) in \cite{CFS16} will become \eqref{e117} in our setting.}
		\begin{equation}\label{e117}
			\operatorname{II}=\Pb\left(X^{{\rm hf}}_{t/4-2^{-2/3}Lt^{2/3}}(t)-2^{1/3}Lt^{2/3}\leq L^2t^{1/3}\right)\leq Ce^{-cL}.
		\end{equation}
		This finishes the proof.
		
	\end{proof}
	\begin{proof}[Proof of Lemma~\ref{ltight}]
		Note that $\{\hat u_t>L\}\subset\{S_t^L\neq S_t^\infty\}$, hence we have
		\begin{equation}
			\Pb\left(\hat u_t>L\right)\leq Ce^{-cL}
		\end{equation}
		for $t$ large. Also, we have $\{\hat u_t<-L\}\subset \{\max_{u\in[ 2^{-4/3}t^{1/3},-L]}H_t(u)\geq A\}\cup\{H_t(0)\leq A\}$ for arbitrary $A$. Choosing $A$ as the one in proof of the previous lemma, we have
		\begin{equation}
			\Pb\left(\hat u_t<-L\right)\leq Ce^{-cL}
		\end{equation}
		by \eqref{e15} and \eqref{e117}.
	\end{proof}

	\subsection{KPZ scaling theory}\label{kpz}
	The KPZ scaling theory \cite{Spo14}, see also further details in Section~6 of~\cite{PS01}, provides a universal scaling formula for models in the KPZ class. After presenting the formulas conjectures by the KPZ sclaing theory, we compute the universal constants using results from TASEP. The model-dependent quantities are easy to compute for ASEP, but more involved for \newasep.
	
	Let $\eta_t$ be a general exclusion process with generator \eqref{jumpra}.
	\begin{asmp}[\cite{Spo14}]\label{spo}
		The spatially ergodic and time stationary measures of the process $\eta_t$ are precisely labeled by the average density
		\begin{equation}
			\rho=\lim _{a \rightarrow \infty} \frac{1}{2 a+1} \sum_{|j| \leq a} \eta(j)
		\end{equation}
		with $|\rho| \leq 1$.
	\end{asmp}
	Denote $\mu_\rho$ as the stationary measure satisfying Assumption~\ref{spo}. Define the stationary current as
	\begin{equation}\label{current1}
		\begin{aligned}
			J(\rho)=&\mu_\rho\left(c_\eta(0,1)\eta(0)(1-\eta(1))\right)-\mu_\rho\left(c_\eta(1,0)\eta(1)(1-\eta(0))\right)
		\end{aligned}
	\end{equation}
	and integrated stationary covariance as
	\begin{equation}\label{covariance}
		A(\rho)=\sum_{j\in\Z}(\mu_\rho\left(\eta(0)\eta(j)\right)-\mu_\rho\left(\eta(0)\right)^2)=\sum_{j\in\Z}(\mu_\rho\left(\eta(0)\eta(j)\right)-\rho^2).
	\end{equation}
	Set $\Gamma(\rho)=-A(\rho)^2 J''(\rho)$.

	\subsubsection{Scaling for particle's position}
	As explained in~\cite{PS01}, the fluctuations of the integrated current (height function) in KPZ models should be scaled by $\Gamma(\rho)^{1/3}t^{1/3}$ and the spatial scaling around the macroscopic behaviour should be scaled by $\Gamma(\rho)^{2/3} A(\rho)^{-1} t^{2/3}$. The height function is defined by setting $h(0,0)=0$, $h(0,t)=2 J(t)$, with $J(t)$ then integrated particle current at $0$ in the time interval $[0,t]$ and $h(x+1,t)-h(x,t)=1-2\eta_t(x)$.

For initial condition $x_n(0)=-\lfloor n/\rho\rfloor$, by universality we expect that the rescaled height function
	\begin{equation}
		\frac{h(\xi t+c_1\Gamma(\rho)^{2/3}A(\rho)^{-1}t^{2/3}u,t)-(1-2\rho)(\xi t+c_1\Gamma(\rho)^{2/3}A(\rho)^{-1}t^{2/3}u)-2J(\rho)t}{-c_2\Gamma(\rho)^{1/3}t^{1/3}}
	\end{equation}
	converges weakly to ${\cal A}_1(u)$ for some model independent constants $c_1,c_2.$

This result can be restated in terms of particle's positions by using the identity
	\begin{equation}\label{e341}
		\Pb\left(h(m-n,t)\geq m+n\right)=\Pb\left(X_n(t)\geq m-n\right).
	\end{equation}
A simple computation leads to the following scaling formula for particle's position.
	\begin{conj}\label{kpzconjecture}
		Suppose $\eta_t$ is Markov process with generator \eqref{jumpra}, initial condition $X_n(0)=-\lfloor n/\rho\rfloor$ with $\rho\in(0,1)$, and invariant measure $\mu_\rho$ satisfying Assumption~\ref{spo}.
		Then there exist universal constants $c_1$ and $c_2$ such that
		\begin{equation}\label{a18}
			\lim_{t\to\infty}\frac{X_{u\theta(\rho) t^{2/3}}(t)+u\theta(\rho) t^{2/3}/\rho-J(\rho)t/\rho}{-(c_2\Gamma(\rho)t)^{1/3}/\rho}={\cal A}_1(u),
		\end{equation}
		where $\theta(\rho)=c_1\Gamma(\rho)^{2/3} A(\rho)^{-1}\rho$.
	\end{conj}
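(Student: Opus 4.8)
The plan is to deduce \eqref{a18} from the height-function scaling conjecture stated just above via the exact particle--height identity \eqref{e341}: no new probabilistic input is needed beyond that conjecture, so the task is to turn the ``simple computation'' into a precise change of variables and to read off $\theta(\rho)$.

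First I would rewrite \eqref{e341} in the form $\Pb(X_n(t)\ge x)=\Pb(h(x,t)\ge x+2n)$ (set $m=x+n$), so that the right-hand side is an event for the height function at the fixed spatial site $x$. For the flat initial condition $X_n(0)=-\lfloor n/\rho\rfloor$ the macroscopic height is the affine profile $\bar h(x,t)=(1-2\rho)x+2J(\rho)t$ — slope $1-2\rho$ from density $\rho$, intercept $2J(\rho)t$ from the law of large numbers for the integrated current — and the conjecture asserts that $h-\bar h$, rescaled by $-c_2\Gamma(\rho)^{1/3}t^{1/3}$ over the spatial window of width $c_1\Gamma(\rho)^{2/3}A(\rho)^{-1}t^{2/3}$, converges weakly to ${\cal A}_1$. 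Then I would fix the label as $n=u\,\theta(\rho)t^{2/3}$ and parametrise the threshold as $x=J(\rho)t/\rho-n/\rho-s\,(c_2\Gamma(\rho)t)^{1/3}/\rho$, i.e.\ the macroscopic prediction $X_n(t)\approx-n/\rho+J(\rho)t/\rho$ minus a $t^{1/3}$ correction of magnitude $s$. Substituting into $\{h(x,t)\ge x+2n\}$ and subtracting $\bar h(x,t)$ from both sides, the $O(t)$ terms cancel by the choice of centering, and what survives is $x+2n-\bar h(x,t)=2\rho\,x-2J(\rho)t+2n=-2s(c_2\Gamma(\rho)t)^{1/3}$; dividing by the height normalisation converts the position shift $s$ into the level of the Airy$_1$ limit, while the label window $u\,\theta(\rho)t^{2/3}$ maps, through the Jacobian $\mathrm dn=-\rho\,\mathrm dx$ of the relation $\bar h(x,t)-x=2n$, onto the spatial window of the height conjecture. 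Matching exponents and coefficients forces $\theta(\rho)=c_1\Gamma(\rho)^{2/3}A(\rho)^{-1}\rho$, with a harmless sign flip $u\mapsto-u$ absorbed by the reflection invariance in law of the stationary process ${\cal A}_1$, and identifies the $t^{1/3}$ scale in \eqref{a18} as the height scale $c_2\Gamma(\rho)^{1/3}t^{1/3}$ divided by the local slope $2\rho$ (the factor $2$ and the precise convention for $c_2$ being absorbed into the universal constant). To pass from weak convergence of the height \emph{process} on compacts to the pointwise-in-$(u,s)$ statement \eqref{a18} I would use monotonicity of $x\mapsto\Pb(X_n(t)\ge x)$ together with continuity of the Airy$_1$ one-point marginal, exactly as in the $\hat u_t$ argument above; the values of $c_1,c_2$ are then pinned down separately by specialising \eqref{a18} to TASEP, where $J$, $A$, $\Gamma$ and the limit law are all explicit.

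Two points need care. The first is that the height-function conjecture is invoked at the nonzero macroscopic slope $1-2\rho$: it is the tilt (Galilean) invariance of the flat KPZ profile — that the Airy$_1$ fluctuations persist unchanged after subtracting the affine macroscopic profile of an \emph{arbitrary} density $\rho$ — which makes the general-$\rho$ statement plausible, and this is the one genuinely model-independent ingredient beyond the $\rho=1/2$ case. The second, and the real obstacle in upgrading the heuristic to an estimate, is controlling the change of variables uniformly: because $\bar h$ is affine the second-order Taylor term vanishes, so the label shift and the position shift decouple at leading order, but one still has to verify that evaluating $h$ at the \emph{fluctuating} point $x$ rather than at its deterministic value changes the Airy$_1$ argument only by $o(t^{1/3})$ — which is precisely why one must assume the $t^{2/3}$-window (process-level) form of the conjecture and not merely its one-point version.
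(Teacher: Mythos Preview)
Your approach is correct and mirrors the paper's own derivation exactly: translate the height-function scaling conjecture into particle positions via the identity \eqref{e341} by the change of variables you describe, then pin down $c_1,c_2$ by specialising to TASEP where the limit is known. The paper is terser (it just writes ``a simple computation leads to the following scaling formula''), and since the statement is a \emph{conjecture} rather than a theorem, your careful caveats about Galilean invariance at general $\rho$ and the $O(t^{1/3})$ drift in the spatial argument of $h$ correctly flag where the heuristic would require work, but they are not gaps in any proof the paper claims.
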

    The constants are $c_1=2$ and $c_2=1$. These are obtained by considering the case of TASEP with flat initial condition \eqref{flat} it is proven that~\cite{BFPS06}
\begin{equation}
\lim_{t\to\infty}\frac{X_{[u t^{2/3}]}(t)+2 u t^{2/3}-t/2}{-t^{1/3}}={\cal A}_1(u).
\end{equation}
Since for TASEP $J(\rho)=A(\rho)=\rho(1-\rho)$, for $\rho=1/2$ we obtain $J(\rho)t/\rho=t/2$, $J''(\rho)=-2$, $\Gamma(\rho)=1/8$. Therefore we have $c_1=2$ and $c_2=1$. The scaling also fits with the results for flat TASEP with generic density $\rho$ as computed in Appendix~A of~\cite{FN23}.

The stationary measures of ASEP satisfy Assumption~\ref{spo} and below we show that the same holds true for \newasep.

	\subsubsection{Scaling for end-point of backwards geodesic}	
	In this section, we deduce the conjecture on the limit fluctuation behaviour of end-point of backwards geodesic. There are essentially two terms appearing in the formula: the law of large number term and the fluctuation order term. For the law of large term, we have
	\begin{equation}
		X_{N(t\downarrow 0)}(0)-X_N(0)=\underbrace{X_{N}(t)-X_N(0)}_{=:\mathrm{I}}-\underbrace{(X_N(t)-X_{N(t\downarrow 0)}(0))}_{=:\mathrm{II}}.
	\end{equation}
	By Definition, $J(\rho)$ is the expected rate of numbers of particles jumping across edge $\{0,1\}$, together with the density $\rho$, the law of large number term of ${\rm I}$ should be given as $J(\rho)t/\rho$. As for term ${\rm II},$ let $\rho(x,t)$ be the macroscopic particle's density, then it should governed by \cite[Equation (6.9)]{KS92}
	\begin{equation}
		\frac{\partial \rho}{\partial t}=-\frac{\partial J(\rho)}{\partial x}=-\frac{\partial J(\rho)}{\partial \rho}\cdot\frac{\partial\rho}{\partial x}
	\end{equation}
	so that $\rho$ is constant along the characteristic line $w(t)$ with $w(0)=\rho_0$ and $\frac{\partial w}{\partial t}=\frac{\partial J}{\partial \rho}$ \cite{Fer16}. On the other hand, the backwards geodesic mimics the characteristic line, thus, the law of large number term should be expressed as $J(\rho)t/\rho-J'(\rho)t$, see Figure~\ref{characteristicLine}.
	\begin{figure}[h]
		\centering
		\includegraphics{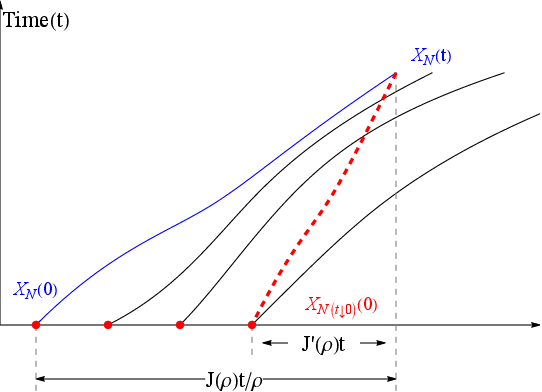}
		\caption{The solid lines represent particle trajectories, with the blue line being the trajectory from which we construct the backward geodesic (the dashed red line). We have $X_{N(t\downarrow 0)}(0) - X_N(0) = X_N(t) - X_N(0) - (X_N(t) - X_{N(t\downarrow 0)}(0))$. Macroscopically, the first difference is about $\frac{J(\rho) t}{\rho}$, while the second difference is given by $J'(\rho) t$.	}
\label{characteristicLine}
	\end{figure}
For the fluctuation order term, by Conjecture~\ref{kpzconjecture}, the correct order should be
	\begin{equation}\label{eq3.45}
		C\Gamma(\rho)^{2/3}A(\rho)^{-1}t^{2/3}.
	\end{equation}
for some $C\in\R$. Comparing to the known result of TASEP, i.e., Theorem~\ref{convergence}, we then obtain $C=2^{1/3}.$
	\begin{conj}\label{12}
		Consider any exclusion process with generator \eqref{jumpra} and initial condition $X_n(0)=-\lfloor n/\rho\rfloor$ for some $\rho\in(0,1)$ such that its translation invariant stationary measure $\mu_\rho$ satisfies Assumption~\ref{spo}. Then\footnote{In both ASEP and \newasep, we have $J'(\rho)=0$ for $\rho=1/2$.} there exists universal constant $c_2$ such that
		\begin{equation}\label{fluc}
			\lim_{t\to\infty}\frac{X_{N(t\downarrow 0)}(0)-X_N(0) -(J(\rho)/\rho-J'(\rho))t}{2^{1/3}\Gamma(\rho)^{2/3}A(\rho)^{-1}t^{2/3}}
			\stackrel{d}{=}\hat u,
		\end{equation}
		where $\hat u=\argmax_{u\in\R}\left\{{\cal A}_2(u)-u^2\right\}$.
	\end{conj}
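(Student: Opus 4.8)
The plan is to read \eqref{fluc} as a consequence of the convergence of the height function to the KPZ fixed point, combined with a variational characterisation of the backwards-geodesic end-point; I will make explicit which inputs are universal, which are the model-dependent coefficients produced by Assumption~\ref{spo}, and where the genuine obstacles lie. (As in the paper, the argument is complete only for TASEP; away from it, two of the steps below rely on inputs that are themselves open, which is why the statement is phrased as a conjecture.)

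\textbf{Step 1 (reduction to a variational minimiser).} First I would try to replace $N(t\downarrow 0)$ by the index at which a variational formula is attained. For TASEP the exact identity \eqref{tasep} already does this: $N(t\downarrow 0)$ is the largest $M\le N$ realising $X_N(t)=\min_{M\le N}\{X^{\mathrm{step},X,M}_{N-M}(t)+X_M(0)\}$, by Lemma~\ref{lm21}. For ASEP and \newasep\ the analogue of \eqref{tasep} fails and one has only the inequality $D_N(t)\ge 0$ of Lemma~\ref{g0}. The task is therefore to show $D_N(t)=o(t^{2/3})$ — indeed $O(1)$ is expected, cf.\ Conjecture~\ref{ConjDiscrepancy} — so that on the $t^{2/3}$ spatial scale the end-point $N(t\downarrow 0)$ and the minimising index of the variational problem coincide; it then suffices to prove \eqref{fluc} for that minimiser.

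\textbf{Step 2 (passage to the KPZ fixed point and centering).} Next I would rescale the height function by $\Gamma(\rho)^{1/3}t^{1/3}$ in the growth direction and by $\Gamma(\rho)^{2/3}A(\rho)^{-1}t^{2/3}$ in space — the scales dictated by Assumption~\ref{spo} and Conjecture~\ref{kpzconjecture} — and invoke convergence to the KPZ fixed point (Quastel--Sarkar for ASEP; expected by universality for \newasep). In the limit the minimisation over $M$ becomes the variational formula $h(x,t)=\max_y\{h(y,0)+{\cal L}(y,0;x,t)\}$ for the directed landscape ${\cal L}$. The macroscopic part is handled as in the text: writing $X_{N(t\downarrow 0)}(0)-X_N(0)=\mathrm{I}-\mathrm{II}$ with $\mathrm{I}=X_N(t)-X_N(0)$ and $\mathrm{II}=X_N(t)-X_{N(t\downarrow 0)}(0)$, term $\mathrm{I}$ contributes $J(\rho)t/\rho$ by the definition \eqref{current1} of the current, while term $\mathrm{II}$ is governed by the characteristic $\tfrac{\partial w}{\partial t}=J'(\rho)$ along which $\rho$ is constant, contributing $J'(\rho)t$; hence the centering $(J(\rho)/\rho-J'(\rho))t$ in \eqref{fluc}.

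\textbf{Step 3 (limit law and the universal constant).} For the flat initial condition \eqref{flat} the macroscopic profile $h(\cdot,0)$ is linear, so after absorbing the linear tilt into the characteristic line and performing the parabolic change of variables $y\leftrightarrow u$, one uses that the parabolic Airy sheet ${\cal L}(\cdot,0;0,1)$ restricted to a line, minus its parabolic curvature, is distributed as the Airy$_2$ process; the maximiser therefore converges to $\argmax_{u\in\R}\{{\cal A}_2(u)-u^2\}=\hat u$. Here one also needs a tightness/localisation estimate for the minimising index — the analogue of Lemma~\ref{ltight} — to upgrade one-point-type convergence to convergence of the argmax. The single remaining model-independent prefactor $C$ in \eqref{eq3.45} is then pinned by comparison with the proven TASEP case, Theorem~\ref{convergence}: there $J(\rho)=A(\rho)=\rho(1-\rho)$, and $\rho=1/2$ gives $\Gamma(\rho)=1/8$, $\Gamma(\rho)^{2/3}A(\rho)^{-1}=1$, $J'(1/2)=0$, reproducing \eqref{goal} and forcing $C=2^{1/3}$ (consistent with the flat-TASEP one-point result of \cite{BFPS06}). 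Combining Steps~1--3 yields \eqref{fluc}.

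\textbf{Main obstacle.} The serious difficulty is Step~1: outside TASEP there is no exact analogue of \eqref{tasep}, so controlling $D_N(t)$ — equivalently, showing the dynamically constructed backwards geodesic tracks the variational minimiser to within $o(t^{2/3})$ — is open, and the tightness input of Step~3 must also be obtained without the LPP representation available for TASEP. For \newasep\ even the input of Step~2, convergence to the KPZ fixed point, is not currently known. This is precisely why \eqref{fluc} is stated as a conjecture and checked numerically in Section~\ref{sectSimulation}.
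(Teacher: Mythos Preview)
Your proposal is correct and mirrors the paper's own heuristic derivation closely: the paper obtains the centering by the same $\mathrm{I}-\mathrm{II}$ decomposition and characteristic-line argument, reads off the fluctuation scale $\Gamma(\rho)^{2/3}A(\rho)^{-1}t^{2/3}$ from the KPZ scaling theory, and fixes the universal constant $2^{1/3}$ by comparison with the proven TASEP case (Theorem~\ref{convergence}). Your Step~1 (reduction to the variational minimiser and control of $D_N(t)$) is a natural addition that makes the proof strategy more explicit---it is exactly what the paper carries out rigorously for TASEP in Section~\ref{pl22} via Lemma~\ref{lm21}---and you correctly flag it, together with the missing KPZ-fixed-point convergence for \newasep, as the reason the statement remains a conjecture.
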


	\subsubsection*{Scaling coefficients for the conjecture on ASEP}
	For ASEP, we can calculate
	\begin{equation}
		J(\rho)=(p-q)\rho(1-\rho),\quad A(\rho)=\rho(1-\rho),\quad\Gamma(\rho)=(p-q)/8.
	\end{equation}
	Plugging $\rho=\tfrac12$, we then obtain Conjecture~\ref{asepconj}.
	
	\subsubsection*{Scaling coefficients for the conjecture on speed changed ASEP}	
	For any $\rho\in[0,1]$, there exists an explicit stationary measure $\mu_\rho$ for \newasep\ as long as its jump rate satisfies
	\begin{equation}\label{db}
		\begin{array}{lll}
			\alpha_2=e^\beta \alpha_4, &\alpha_2+\alpha_4=2 \alpha_3,&\\[1em]
			\gamma_2=\alpha_2 e^{-\beta} e^{-E}, &\gamma_3=\alpha_3 e^{-E},& \gamma_4=\alpha_4 e^\beta e^{-E},
		\end{array}
	\end{equation}
	for some $\beta,E\in\R$. Moreover, the stationary measure $\mu_\rho$ on $\{0,1\}^\Z$ of $\eta_t$ has the form~\cite[(6.19)]{KS92}
	\begin{equation}\label{stationary} \mu_{\rho}(\eta)=\frac{1}{Z}e^{\beta\sum_{i}\eta(i)\eta(i+1)+h\sum_{i}\eta(i)},
	\end{equation}
	where $Z$ is the normalization constant and $h$ fixes the density. Defining
	\begin{equation}
		\mathfrak{h}(x,y)=e^{\beta xy+\frac{h}{2}(x+y)}
	\end{equation}
	we then have
	\begin{equation}\label{exp}
		\mu_{\rho}(\eta)=\frac1Z\prod_{i}\mathfrak h(\eta(i),\eta(i+1)).
	\end{equation}
	
	For initial condition with density $\rho=\tfrac12$, $\mu_\rho$ satisfies Assumption~\ref{spo}. Recall that we set
	\begin{equation}\label{newalpha}
		\begin{array}{lll}
			\alpha_2=1, &\alpha_3=\frac12\left(1+e^{- \beta}\right),&\alpha_4=e^{-\beta},\\[1em]
			\gamma_2=e^{-\beta}e^{-E}, &\gamma_3=\frac12\left(1+e^{-\beta}\right)e^{-E},& \gamma_4=e^{-E}.
		\end{array}
	\end{equation}
	One can check that those parameter indeed satisfies condition \eqref{db}. Hence, it remains to calculate $J(\rho)$ and $A(\rho)$. The choice of this model is due to the fact that, as mentioned in \cite{KS92}, it is one of the few growth model for which these model-dependent parameters can be computed analytically.
	
	\begin{lem}\label{asepn}
		Let $\beta,E\in\R$ and $\rho\in [0,1]$ and jump rate given in \eqref{newalpha}. We have
		\begin{equation}
			J(\rho,\beta,E)=J_{+}(\rho,\beta)(1-e^{-E}),
		\end{equation}
		where the positive current $J_+$ is given by
		\begin{equation}\label{positivecurrent}
			J_+(\rho,\beta)=\frac{2 \rho(1-\rho)\left[e^{{\beta}}+\sqrt{(1-2 \rho)^2+4 \rho(1-\rho)  e^{{\beta}}}\right]}{e^{{\beta}} \left[1+\sqrt{(1-2 \rho)^2+4 \rho(1-\rho) e^{{\beta}}}\right]^2}.
		\end{equation}
		Its integrated covariance is given by
		\begin{equation}
			A(\rho,\beta,E)=\rho(1-\rho)\sqrt{(1-2\rho)^2+4\rho(1-\rho)e^{\beta}}.
		\end{equation}
	\end{lem}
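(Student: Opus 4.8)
The plan is to exploit that the invariant measure $\mu_\rho$ of \eqref{stationary} is a translation-invariant Gibbs measure for a one-dimensional nearest-neighbour potential, hence a stationary two-state Markov chain in space, fully encoded by the symmetric transfer matrix $T$ with entries $T_{xy}=\mathfrak h(x,y)$. Writing $z:=e^{h/2}$, $b:=e^{\beta}$, this is
\[
T=\begin{pmatrix} 1 & z \\ z & bz^2 \end{pmatrix},
\]
and the standard transfer-matrix identity gives, for every finite cylinder,
\[
\mu_\rho\bigl(\eta(i)=a_0,\dots,\eta(i+k)=a_k\bigr)=\lambda_+^{-k}\,\psi_{a_0}T_{a_0a_1}\cdots T_{a_{k-1}a_k}\psi_{a_k},
\]
with $\lambda_+>\lambda_-$ the eigenvalues of $T$ and $\psi=(\psi_0,\psi_1)$ its positive, normalised Perron eigenvector. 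The first (and conceptually main) step is to make everything explicit: from $\rho=\psi_1^2$, $1-\rho=\psi_0^2$ and $T\psi=\lambda_+\psi$ one solves for $z$ in terms of $\rho,b$ (the root $z>0$ being forced by $z=e^{h/2}$) and, re-substituting into the characteristic polynomial, obtains the closed forms
\[
\lambda_\pm=\frac{\kappa\pm1}{\kappa+1-2\rho},\qquad
z^2=\frac{\kappa-(1-2\rho)}{b\,(\kappa+1-2\rho)},\qquad
\kappa:=\sqrt{(1-2\rho)^2+4\rho(1-\rho)e^{\beta}}.
\]
Everything afterwards is elementary algebra, using repeatedly $(1-2\rho)^2+4\rho(1-\rho)=1$, the factorisation $\bigl(\kappa-(1-2\rho)\bigr)\bigl(\kappa+(1-2\rho)\bigr)=4\rho(1-\rho)e^{\beta}$, and $\kappa^2=(1-e^{\beta})(1-2\rho)^2+e^{\beta}$.

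For the covariance, the two-point function of this reversible two-state chain decays geometrically, $\mu_\rho(\eta(0)\eta(j))-\rho^2=\rho(1-\rho)\,\nu^{|j|}$ with $\nu:=\lambda_-/\lambda_+=\tfrac{\kappa-1}{\kappa+1}\in(-1,1)$, because the associated stochastic matrix $P_{ac}=T_{ac}\psi_c/(\lambda_+\psi_a)$ is similar to $T/\lambda_+$ and hence has spectrum $\{1,\nu\}$, while its second spectral projector has $(1,1)$-entry $1-\rho$. Summing the geometric series in \eqref{covariance} gives $A(\rho)=\rho(1-\rho)\tfrac{1+\nu}{1-\nu}=\rho(1-\rho)\kappa$, which is the claimed formula for $A(\rho,\beta,E)$; note it is independent of $E$, as it must be since $\mu_\rho$ is.

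For the current I use \eqref{current1}: writing $\mu_\rho(a_0,a_1,a_2,a_3)$ for the probability of $\eta(-1)=a_0,\dots,\eta(2)=a_3$ (so that the factor $\eta(0)(1-\eta(1))$, resp.\ $\eta(1)(1-\eta(0))$, is already encoded in the pattern), the stationary current is the difference between the rightward rate $r:=\alpha_2\mu_\rho(0,1,0,1)+\alpha_3\mu_\rho(0,1,0,0)+\alpha_3\mu_\rho(1,1,0,1)+\alpha_4\mu_\rho(1,1,0,0)$ and the leftward rate built the same way from the $\gamma_i$ and the patterns $(\cdot,0,1,\cdot)$. Inserting the $4$-site cylinder formula and the rate relations \eqref{db} — explicitly $\alpha_2=e^{\beta}\alpha_4$, $\alpha_2+\alpha_4=2\alpha_3$, $\gamma_2=e^{-\beta-E}\alpha_2$, $\gamma_3=e^{-E}\alpha_3$, $\gamma_4=e^{\beta-E}\alpha_4$ — one finds
\[
r=\lambda_+^{-3}z^2\bigl[\alpha_3(\psi_0^2+bz^2\psi_1^2)+2\alpha_2 z\psi_0\psi_1\bigr]
\]
and checks that the leftward rate equals exactly $e^{-E}r$; thus $J(\rho,\beta,E)=(1-e^{-E})\,r$, and it only remains to identify $r$ with $J_+(\rho,\beta)$ (conceptually: for $E=0$ the rates are reversible for $\mu_\rho$, and raising $E$ rescales every left rate by $e^{-E}$ without touching $\mu_\rho$). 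Substituting $\psi_0^2=1-\rho$, $\psi_1^2=\rho$, $\psi_0\psi_1=\sqrt{\rho(1-\rho)}$, $\alpha_2=1$, $\alpha_3=\tfrac12(1+e^{-\beta})$, and then the Step-1 formulas, the bracket collapses to $\tfrac{(\kappa+1)(\kappa+e^{\beta})}{2e^{\beta}(\kappa+1-2\rho)}$ and $\lambda_+^{-3}z^2$ to $\tfrac{4\rho(1-\rho)(\kappa+1-2\rho)}{(\kappa+1)^3}$, whence $r=\tfrac{2\rho(1-\rho)(\kappa+e^{\beta})}{e^{\beta}(\kappa+1)^2}$, which is exactly $J_+(\rho,\beta)$ in \eqref{positivecurrent}.

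The only real obstacle is the bookkeeping: both the identity that the leftward rate equals $e^{-E}r$ and the final identification $r=J_+$ require carrying the $4$-site cylinder probabilities through products of transfer-matrix entries and eigenvector components, and the algebra is short only if $\lambda_\pm$, $1+bz^2$ and $z^2$ are first reduced to the rational functions of $\kappa$ above. I do not anticipate any genuinely subtle point: once one observes that $\mu_\rho$ is a two-state Markov chain, the entire lemma is $2\times2$ linear algebra together with the two quadratic identities for $\kappa$ recorded in the first paragraph, the degenerate cases $\rho\in\{0,1\}$ being trivial.
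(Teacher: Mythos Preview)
Your proposal is correct and follows essentially the same route as the paper: both exploit that $\mu_\rho$ is a spatial two-state Markov chain to compute the four-site cylinder probabilities entering the current and the geometric decay of the two-point function entering the covariance, and both obtain $J_-=e^{-E}J_+$ from the rate relations \eqref{db}. The only difference is organizational---you parametrize via the transfer matrix (eigenvalues $\lambda_\pm$, Perron eigenvector $\psi$, auxiliary $\kappa$), whereas the paper works with the transition probabilities $\hat\alpha=\mu_\rho(\eta(1){=}1\mid\eta(0){=}0)$, $\hat\beta=\mu_\rho(\eta(1){=}0\mid\eta(0){=}1)$ and solves for them from $\rho=\hat\alpha/(\hat\alpha+\hat\beta)$ and $e^{-\beta}=\hat\alpha\hat\beta/\bigl((1-\hat\alpha)(1-\hat\beta)\bigr)$; the two parametrizations are related by $P_{ac}=T_{ac}\psi_c/(\lambda_+\psi_a)$ as you note, and the resulting algebra is equivalent.
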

	Plugging this with $\rho=\tfrac12$ into Lemma~\ref{asepn} leads to \eqref{e122}. Applying Conjecture~\ref{kpzconjecture} (resp.\ Conjecture~\ref{12}), we then obtain Conjecture~\ref{conjp} (resp.\ Conjecture~\ref{conj}). Hence, it remains to prove Lemma~\ref{asepn}. First note that the measure $\mu_\rho$ in \eqref{exp} satisfies spatial Markov property.
	\begin{lem}%[Spatial Markov Property]
		For any $n\in\Z_{\geq 1}$ and $\eta\in\{0,1\}^\Z$, it holds
		\begin{equation}\label{spacemarkov}
			\mu_{\rho}(\eta(n)|\eta(0),\ldots,\eta(n-1))=\mu_{\rho}(\eta(n)|\eta(n-1)).
		\end{equation}
	\end{lem}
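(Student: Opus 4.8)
The plan is to exploit the product structure \eqref{exp} of $\mu_\rho$ directly. The measure is written as $\mu_\rho(\eta)=\frac1Z\prod_i \mathfrak h(\eta(i),\eta(i+1))$, which is exactly the form of a (stationary) one-dimensional Markov chain on the state space $\{0,1\}$ with transfer matrix $\mathfrak h$. First I would make this precise: for a finite window $\{m,m+1,\dots,M\}$ the marginal of $\mu_\rho$ on $(\eta(m),\dots,\eta(M))$ is proportional to $v_{\rm L}(\eta(m))\,\bigl[\prod_{i=m}^{M-1}\mathfrak h(\eta(i),\eta(i+1))\bigr]\,v_{\rm R}(\eta(M))$, where the boundary weights $v_{\rm L},v_{\rm R}$ arise from summing the matrix $\mathfrak h$ over the sites outside the window; concretely $v_{\rm L}$ (resp.\ $v_{\rm R}$) is the left (resp.\ right) Perron eigenvector of $\mathfrak h$, and the normalization is governed by the Perron eigenvalue. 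This is the standard one-to-one correspondence between Gibbs measures with nearest-neighbour interaction on $\Z$ and stationary Markov chains, and it is where the bulk of the (short) argument lives.

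Next I would compute the conditional probability on the left-hand side of \eqref{spacemarkov}. Using the finite-window marginal just described for the window $\{0,1,\dots,n\}$ and then for $\{0,1,\dots,n-1\}$, the ratio $\mu_\rho(\eta(0),\dots,\eta(n))/\mu_\rho(\eta(0),\dots,\eta(n-1))$ telescopes: all factors $\mathfrak h(\eta(i),\eta(i+1))$ for $i\le n-2$ cancel, as does the left boundary weight $v_{\rm L}(\eta(0))$, leaving
\begin{equation}
\mu_\rho(\eta(n)\mid \eta(0),\dots,\eta(n-1)) = \frac{\mathfrak h(\eta(n-1),\eta(n))\,v_{\rm R}(\eta(n))}{\sum_{a\in\{0,1\}}\mathfrak h(\eta(n-1),a)\,v_{\rm R}(a)}.
\end{equation}
The right-hand side depends on $\eta(0),\dots,\eta(n-2)$ not at all, only on $\eta(n-1)$ (and on $\eta(n)$). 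Running the identical computation for the two-site marginal $\mu_\rho(\eta(n-1),\eta(n))$ over $\mu_\rho(\eta(n-1))$ gives the same expression, which establishes $\mu_\rho(\eta(n)\mid\eta(0),\dots,\eta(n-1))=\mu_\rho(\eta(n)\mid\eta(n-1))$.

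Alternatively, and even more elementarily, one may bypass the boundary-weight bookkeeping entirely: for the conditional law of $\eta(n)$ given $\eta(0),\dots,\eta(n-1)$ one only needs the ratio of $\mu_\rho(\eta)$ at two configurations differing solely at site $n$. In the product formula \eqref{exp} every factor $\mathfrak h(\eta(i),\eta(i+1))$ with $i\notin\{n-1,n\}$ is unaffected by the value of $\eta(n)$, and $\mathfrak h(\eta(n),\eta(n+1))$ is the same in numerator and denominator once we further sum/average over $\eta(n+1),\eta(n+2),\dots$ — here one uses that $\mu_\rho$ is a genuine probability measure so the tail sum factors are finite and positive. After cancellation the only surviving dependence on the conditioning is through $\eta(n-1)$ via the single factor $\mathfrak h(\eta(n-1),\eta(n))$, which is precisely the claim. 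The main (and only real) obstacle is the minor technical point of justifying these manipulations on the infinite lattice, i.e.\ that $\mu_\rho$ in \eqref{exp} is well-defined as a limit of finite-volume Gibbs measures and that conditional probabilities may be computed through finite-window marginals; this is standard for one-dimensional finite-range Gibbs measures (transfer-matrix / DLR formalism) and can be cited rather than reproved.
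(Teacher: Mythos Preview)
Your proposal is correct and follows essentially the same route as the paper's proof: the paper also factorises the finite-window marginal as $L(m,\eta(m))\cdot\prod_{i=m}^{n-1}\mathfrak h(\eta(i),\eta(i+1))\cdot R(\eta(n),n)$ with left/right tail sums $L,R$, then takes the ratio of marginals so that $L$ and the interior factors cancel, leaving $\mathfrak h(\eta(n-1),\eta(n))R(\eta(n),n)/R(\eta(n-1),n-1)$, which is identified with $\mu_\rho(\eta(n)\mid\eta(n-1))$. The only difference is cosmetic: you identify the boundary weights as Perron eigenvectors of $\mathfrak h$, whereas the paper simply writes them as (formal) infinite sums without naming them; neither version dwells on the well-definedness of the infinite-volume measure, which you rightly flag as the one technical point to be cited rather than reproved.
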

	\begin{proof}
		Let $m,n\in\Z$ with $m\leq n$ and $\eta(m),\ldots,\eta(n)\in\{0,1\}$. We also define \begin{equation}
			\begin{aligned}
				&\Z_{<n}=\{x\in\Z| x<n\},\quad \Z_{>n}=\{x\in\Z| x>n\},\quad\forall n\in\Z.
			\end{aligned}
		\end{equation}
		By \eqref{exp}, we have
		\begin{equation}
			\begin{aligned}
				&\mu_{\rho}(\eta(m),\ldots,\eta(n))\\
				=&	\frac1Z\sum_{\xi_1\in\{0,1\}^{\Z_{<m}}}\sum_{\xi_2\in\{0,1\}^{\Z_{>n}}}\left\{
				\bigg(\prod_{i<m-1}\mathfrak h(\xi_1(i),\xi_1(i+1))\!\bigg)\mathfrak h(\xi_1(m-1),\eta(m))\right.\\
				\times&\left.\bigg(\prod_{i=m}^{n-1}\mathfrak h(\eta(i),\eta(i+1))\!\bigg)\mathfrak h(\eta(n),\xi_2(n+1))
				\prod_{i>n}\mathfrak h(\xi_2(i),\xi_2(i+1))\right\}
			\end{aligned}
		\end{equation}
which can be further written as
		\begin{equation}
			\begin{aligned}
&\underbrace{\frac{1}{\sqrt{Z}}\sum_{\xi_1\in\{0,1\}^{\Z_{<m}}}\bigg(\prod_{i<m-1}\mathfrak h(\xi_1(i),\xi_1(i+1))\!\bigg)\mathfrak h(\xi_1(m-1),\eta(m))}_{=:L(m,\eta(m))}\\
&\times \bigg(\prod_{i=m}^{n-1}\mathfrak h(\eta(i),\eta(i+1))\!\bigg)\underbrace{\frac{1}{\sqrt{Z}}\sum_{\xi_2\in\{0,1\}^{\Z_{>n}}}\!\!\!\mathfrak h(\eta(n),\xi_2(n+1))\bigg(
					\prod_{i>n}\mathfrak h(\xi_2(i),\xi_2(i+1))\!\bigg)}_{=:R(\eta(n),n)}.
			\end{aligned}
		\end{equation}
		Thus we have
		\begin{equation}
			\begin{aligned}
				&\mu_{\rho}(\eta(n)| \eta(0),\ldots,\eta(n-1))=\frac{\mu_{\rho}(\eta(0),\ldots,\eta(n))}{\mu_{\rho}(\eta(0),\ldots,\eta(n-1))}\\
				=&h(\eta(n-1),\eta(n))\frac{R(\eta(n),n)}{R(\eta(n-1),n-1)}=\frac{\mu_{\rho}(\eta(n-1),\eta(n))}{\mu_{\rho}(\eta(n-1))}=\mu_{\rho}(\eta(n)|\eta(n-1)),
			\end{aligned}
		\end{equation}
		which completes the proof.
	\end{proof}
	This result implied in particular that
	\begin{equation}\label{spm} \mu_{\rho}(\eta(m),\ldots,\eta(n))=\mu_{\rho}(\eta(m))\prod_{i=m}^{n-1}\mu_{\rho}(\eta(i+1)|\eta(i)),\quad\forall m,n\in\Z
	\end{equation}
	allowing us to compute $J(\rho)$ and $A(\rho)$ as follows.
	
	\begin{proof}[Proof of Lemma~\ref{asepn}]
		\textbf{The stationary current.} By its definition, see \eqref{current1}, we have
		\begin{equation}
			J(\rho,\beta,E)=\underbrace{\sum_{\substack{\eta\in\Omega\\\eta(0)=1,\eta(1)=0}} c_{\eta}(0,1)\mu_\rho(\eta)}_{=:J_+(\rho,\beta)}-\underbrace{\sum_{\substack{\eta\in\Omega\\\eta(0)=0,\eta(1)=1}} c_{\eta}(1,0)\mu_\rho(\eta)}_{=:J_{-}(\rho,\beta)}.
		\end{equation}
		By the definition of jump rate \eqref{jp1}, we have
		\begin{equation}\label{positive}
			\begin{aligned}
				J_+(\rho,\beta)& =\alpha_2\mu_\rho(\eta(-1)=0,\eta(0)=1,\eta(1)=0,\eta(2)=1)\\
				&+\alpha_3\mu_\rho(\eta(-1)=1,\eta(0)=1,\eta(1)=0,\eta(2)=1)\\
				&+\alpha_3\mu_\rho(\eta(-1)=0,\eta(0)=1,\eta(1)=0,\eta(2)=0)\\
				&+\alpha_4\mu_\rho(\eta(-1)=1,\eta(0)=1,\eta(1)=0,\eta(2)=0).
			\end{aligned}
		\end{equation}
		Set
		\begin{equation}\label{alphahat}
			\hat\alpha=\mu_\rho(\eta(1)=1| \eta(0)=0),\quad \hat\beta =\mu_\rho(\eta(1)=0| \eta(0)=1).
		\end{equation}
		Applying~\eqref{newalpha}, translation invariance and spatial Markov property \eqref{spm} to~\eqref{positive}, we obtain
		\begin{equation}\label{jplus}
			\begin{aligned} J_+(\rho,\beta)=&(1-\rho)\hat\alpha\hat\beta\hat\alpha+\frac{1+e^{-\beta}}{2}\rho(1-\hat\beta)\hat\beta\hat\alpha\\ &+\frac{1+e^{-\beta}}{2}(1-\rho)\hat\alpha\hat\beta(1-\hat\alpha)+\rho(1-\hat\beta)\hat\beta(1-\hat\alpha)e^{-\beta}.
			\end{aligned}
		\end{equation}
		Similarly, we  get $J_{-}(\rho,\beta)=e^{-E}J_{+}(\rho,\beta)$, which then implies
		\begin{equation}\label{j}
			J(\rho,\beta,E)=(1-e^{-E})J_+(\rho,\beta).
		\end{equation}
		To show \eqref{positivecurrent}, we need to express $\hat\alpha,\hat\beta$ in terms of $\rho$ and $\beta$. Note that
		\begin{equation}
			\begin{aligned}
				1-\rho&= \mu_{\rho}(\eta(k+1)=0)\\
				&=  \mu_{\rho}(\eta(k+1)=0|\eta(k)=1)\mu_{\rho}(\eta(k)=1)\\
				&\quad+\mu_{\rho}(\eta(k+1)=0|\eta(k)=0)\mu_{\rho}(\eta(k)=0)\\
				&=\hat\beta\rho+(1-\hat\alpha)(1-\rho),
			\end{aligned}
		\end{equation}
		which then implies
		\begin{equation}\label{r}
			\rho=\frac{\hat\alpha}{\hat\alpha+\hat\beta}.
		\end{equation}
		On the other hand, by spatial Markov property~\eqref{spm}, we have
		\begin{equation}
			\begin{aligned} &\mu_{\rho}(\eta(0),\eta(1),\ldots)=\mu_{\rho}(\eta(0))\prod_{i=0}^\infty\mu_{\rho}(\eta(i+1)|\eta(i))\\
				=&(1-\rho)^{1-\eta(0)}\rho^{\eta(0)}\prod_{i=0}^\infty (1-\hat\alpha)^{(1-\eta(i))(1-\eta(i+1))}\hat\alpha^{(1-\eta(i))\eta(i+1)}\hat\beta^{\eta(i)(1-\eta(i+1))}(1-\hat\beta)^{\eta(i)\eta(i+1)}\\
				=&{\rm const}\times\prod_{i=0}^\infty \bigg[\frac{\hat\alpha\hat\beta}{(1-\hat\beta)(1-\hat\alpha)}\bigg]^{\eta(i)\eta(i+1)}\bigg[\frac{\hat\alpha\hat\beta}{(1-\hat\alpha)^2}\bigg]^{\eta(i)}.
			\end{aligned}
		\end{equation}
		Comparing this with \eqref{stationary} we obtain
		\begin{equation}\label{minusbeta}
			e^{-\beta}=\frac{\hat\alpha\hat\beta}{(1-\hat\beta)(1-\hat\alpha)}.
		\end{equation}
		Combining (\ref{r}) and (\ref{minusbeta}), we finally get
		\begin{equation}\label{final}
				\hat\alpha=\frac{2\rho}{1+\sqrt{(1-2\rho)^2+4\rho(1-\rho)e^{\beta}}},\quad
				\hat\beta=\frac{1-\sqrt{(1-2\rho)^2+4\rho(1-\rho)e^\beta}}{2\rho(1-\gamma)}.
		\end{equation}
		Plugging \eqref{final} back into (\ref{positive}) and (\ref{j}), we get the claimed expression for the stationary current.
		
		\textbf{The integrated stationary covariance.} Applying translation invariance and the definition of the integrated covariance, see~\eqref{covariance}, we have
		\begin{equation}\label{back}
			\begin{aligned}
				A(\rho,\beta,E)=&\sum_{j\in\mathbb Z}(\mu_\rho(\eta(0)\eta(j))-\rho^2)=\sum_{j\in\mathbb Z}(\mu_\rho(\eta(0)=\eta(j)=1)-\rho^2)\\
				=&\rho-\rho^2+2\sum_{j\geq 1}^\infty (\mu_\rho(\eta(0)=\eta(j)=1)-\rho^2)\\
				=&\rho(1-\rho)+2\sum_{j\geq 1}^\infty (\mu_\rho(\eta(j)=1|\eta(0)=1)\rho-\rho^2),
			\end{aligned}
		\end{equation}
		Choose a fixed $j>0$, by~\eqref{spacemarkov}, we can consider $(\eta(n))_{n\in\Z_{>0}}$ as a spatial Markov chain with transition matrix
		\begin{equation}
			T=\begin{pmatrix}
				1-\hat\alpha &\hat\alpha\\
				\hat\beta &1-\hat\beta
			\end{pmatrix},
		\end{equation}
		where $\hat\alpha,\hat\beta$ are the same as~\eqref{alphahat}. Then we have
		\begin{equation}
			\mu(\eta(j)=1|\eta(0)=1)=[T^j]_{1,1},
		\end{equation}
		where  $[T^j]_{1,1}$ is the component of matrix $T^{j}$ on first column and row. Applying eigenbasis decomposition, we have $T=U\hat T U^{-1}$, where
		\begin{equation}
			U=\begin{pmatrix}
				1 &-\hat\alpha\\
				1 &\hat\beta
			\end{pmatrix},\quad \hat T=\begin{pmatrix}
				1 &0\\
				0 &1-\hat\alpha-\hat\beta
			\end{pmatrix},
		\end{equation}
		which implies
		\begin{equation}
			\mu(\eta(j)=1|\eta(0)=1)=\rho+(1-\rho)(1-\hat\alpha-\hat\beta)^j,\quad\forall j\in Z_{\geq 1}.
		\end{equation}
		Plugging this into~\eqref{back}, we have
		\begin{equation}
			\begin{aligned}
				A(\rho,\beta,E)=&\rho(1-\rho)+2\sum_{j=1}^\infty(1-\rho)(1-\hat\alpha-\hat\beta)^{j}\\				=&\rho(1-\rho)\bigg(1+\frac{2(1-\hat\alpha-\hat\beta)}{\hat\alpha+\hat\beta}\bigg).
			\end{aligned}
		\end{equation}
		Replacing $\hat\alpha,\hat\beta$ we obtained the claimed formula.
	\end{proof}
	
	\section{Simulation results}\label{sectSimulation}
	In this section, we present the numerical results. The raw data can be found in the BonnData repository~\cite{FK2/PESMFT_2024}. Recall that for a random variable $Y$, the skewness and kurtosis are defined as
	\begin{equation}
		\mathrm{Skew}(Y) = \frac{\E\left((Y-\mu)^3\right)}{\E\left((Y-\mu)^2\right)^{3/2}}, \quad \mathrm{Kurt}(Y) = \frac{\E\left((Y-\mu)^4\right)}{\E\left((Y-\mu)^2\right)^{2}},
	\end{equation}
	where $\mu = \E(Y)$. Moreover, we denote
	\begin{equation}
		\overline{\E}(Y) = \frac{1}{n}\sum_{i=1}^n y_i,
	\end{equation}
	as its empirical expectation (we also use similar notation for other statistics: variance, skewness, and kurtosis), where $y_1,y_2,\ldots,y_n$ are independent realizations of the random variable $Y$.
	
	\subsection{Results for ASEP}
	\subsubsection{End-point of backwards geodesic}\label{asepbkp}
	
	We have simulated ASEP with $p = 3/4$, which is far away from the TASEP case ($p = 1$) and the SSEP (symmetric simple exclusion process) case ($p = 1/2$). In Figure~\ref{asepright}, we compare the empirical density function of $B_t^{{\rm ASEP}}$ at times $t=1000$ and $t=5000$ with the density function of $\hat u$. For increasing time, the empirical density function of $B_t^{{\rm ASEP}}$ converges to the density function of $\hat u$.
	\begin{figure}[h]
		\centering
		\includegraphics[height=5cm]{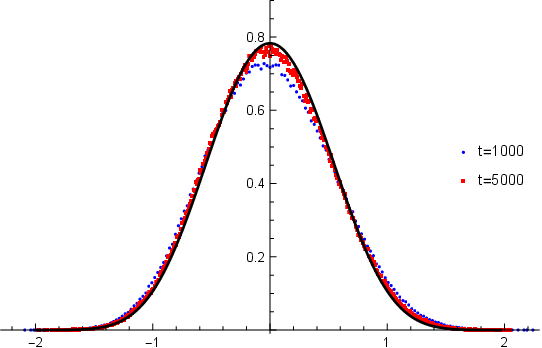}
		\caption{Comparison between the density function of $B_t^{\rm ASEP}$ and $\hat u$. The black line represents the probability density function of $\hat u$, while the red (resp. blue) points correspond to the probability density of $B_t^{\rm ASEP}$ at time $t = 1000$ (resp. $t = 5000$). The number of trials is $10^6$.
		}\label{asepright}
	\end{figure}
	
The convergence to $\hat u$ is confirmed at the level of the first three standard statistics. Indeed, denote by $\overline{\E}(B_{t_i}^{{\rm ASEP}})$ the empirical mean of $B_{t_i}^{{\rm ASEP}}$, and
	\begin{equation}\label{e36}
		\widehat{\E}\left(B_{t_i}^{{\rm ASEP}}\right) = \left|\overline{\E}(B_{t_i}^{{\rm ASEP}}) - \E\left(\hat u\right)\right|,
	\end{equation}
and similarly for other statistics: ${\rm Var}$, ${\rm Skew}$, and ${\rm Kurt}$.

\begin{figure}[h]
		\centering
		\includegraphics[height=4cm]{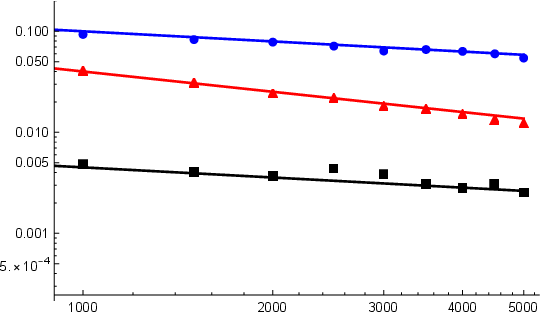}
		\caption{Log-log plot for the statistics of $B_t^{\mathrm{ASEP}}$. The square, triangle, and circle markers represent the points corresponding to $(t_i, \widehat{\mathbb{E}}(B_t^{\mathrm{ASEP}}))$, $(t_i, \widehat{\mathrm{Var}}(B_t^{\mathrm{ASEP}}))$, and $(t_i, \widehat{\mathrm{Skew}}(B_t^{\mathrm{ASEP}}))$, respectively, where $t_i \in \{1000, 1500, \ldots, 5000\}$. The blue, red, and black solid lines are the reference lines $(t, t^{-1/3})$, $(t, 4t^{-2/3})$, and $(t, 0.045t^{-1/3})$, respectively. The number of trials is $10^6$.
		}\label{asependpointstatisticc}
	\end{figure}
In Figure~\ref{asependpointstatisticc} we have a log-log plot of these statistics and we clearly see that in all cases the convergence to the ones of $\hat u$ are power-law. More precisely, $\widehat{\E}\left(B_{t_i}^{{\rm ASEP}}\right)$ and $\widehat{\mathrm{Skew}}(B_t^{\mathrm{ASEP}})$ converges to $0$ as $t^{-1/3}$, while the speed of convergence $\widehat{\mathrm{Var}}(B_t^{\mathrm{ASEP}})$ is $t^{-2/3}$. As often is the case, the statistics of the kurtosis is much more noisy and it did not provide reliable numbers, so we did not include in the plot.

\subsubsection{On the discrepancy $D_N(t)$}\label{asepdis}
	
Next, we provide numerical evidence that the discrepancy $D_n(t)$ tends to a random variable without scaling in time. In the left panel of Figure~\ref{diffPdf}, we illustrate the empirical density function of $D_n(t)$ at $t = 200$, $t = 1000$, and $t = 2000$, while in the right panel of Figure~\ref{diffPdf} we show the log-linear plots of it. As time increases, one observes that the probability of the very small values of $D_N(t)$ slightly decreases, but more importantly, the probabilities of the large values considerably decrease, indicating that $D_N(t)$ tends to a random variable.	
		\begin{figure}[h]
		\centering
		\begin{subfigure}
			\centering
			\includegraphics[height=4cm]{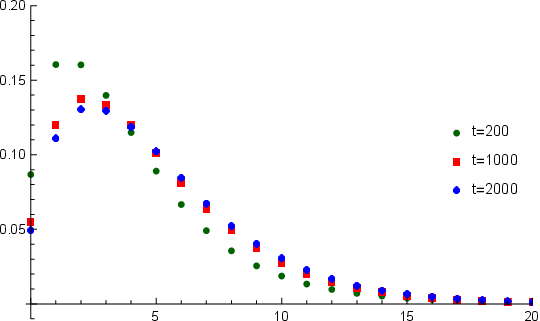}
		\end{subfigure}
		\begin{subfigure}
			\centering
			\includegraphics[height=4cm]{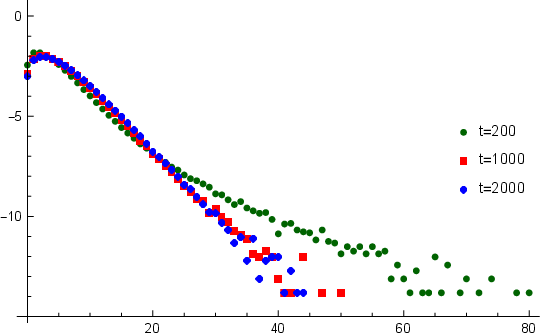}
		\end{subfigure}
		\caption{Probability density function $f_{D_n(t)}$ in ASEP with $p = 3/4$.			
			Left panel: Original density function $(x, f_{D_n(t)}(x))$.			
			Right panel: Log-linear plot $(x, \log f_{D_n(t)}(x))$. The green, red, and blue points in the figure correspond to $t = 200$, $t = 1000$, and $t = 2000$, respectively. The number of trials is $10^6$.	
		 }\label{diffPdf}
	\end{figure}

We also present the statistics of $D_n(t)$ at times $t = 400, 800, \dots, 2000$ in Table~\ref{statistic}. The expectation and variance slightly increase with time, and the skewness decreases over time. However, the rates of change are extremely slow, and The rate of change is gradually slowing down over time. As for the kurtosis, compared to the other statistics, it appears to exhibit greater numerical instability.
	
	\begin{table}[h]
		\centering
		\begin{tabularx}{0.779\textwidth} {|c|c|c|c|c|  }
			\hline
			$t$ & $\E\left(D_N(t)\right)$ & $\operatorname{Var}\left(D_N(t)\right)$ &$\operatorname{Skew}\left(D_N(t)\right)$ &$\operatorname{Kurt}\left(D_N(t)\right)$ \\
			\hline
			$400$ &4.34   & 12.96  &1.80 &9.73\\
			$800$ &4.62   & 13.10  &1.47 &6.79\\
			$1200$ &4.79   & 13.57  &1.41 &6.33\\
			$1600$ &4.88   & 13.81  &1.38 &6.15\\
			$2000$ &4.95   & 14.07  &1.34 &5.88\\
			\hline
		\end{tabularx}
		\caption{Empirical mean, variance, skewness, and kurtosis of the discrepancy $D_n(t)$ with $t \in \{400, 800, \ldots, 2000\}$. The number of trials is $10^6$.
			 }\label{statistic}
	\end{table}
	
	\subsection{Results for speed changed ASEP}
	\subsubsection{Fluctuations of tagged particle position}\label{asepscpp}
    We made the simulation of the \newasep\ with jump rate given by \eqref{asepnjumprate} with parameters $\beta=\log 4$ and $E=\infty$ (particles can only jump to right). Denote $X_{{\rm GOE}}$ as a random variable such that
	\begin{equation}
		\Pb\left(X_{{\rm GOE}}\leq s\right)=F_{{\rm GOE}}(2s).
	\end{equation}
		In Figure~\ref{ne}, we compare the empirical density function of $X^{\rm resc}_N(t)$ at times $t = 200$, $t = 600$, and $t = 1000$ with the density function of $X_{{\rm GOE}}$. One clearly see that the shape of the two functions are very close, indicating that $X^{\rm resc}_N(t)$ indeed converges in distribution to $X_{{\rm GOE}}$. Also, one can see a slight shift to the right, which decreases over time. This is a general fact occurring in models in the KPZ class as discussed in Remark~\ref{fub} below.
	
	\begin{figure}[h]
		\centering
		\begin{subfigure}
			\centering
			\includegraphics[height=5cm]{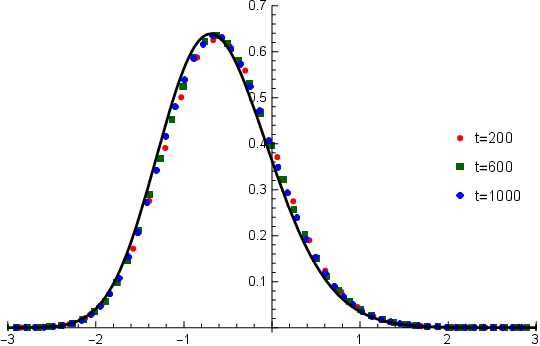}
		\end{subfigure}

		\caption{Numerical verification for Conjecture~\ref{conjp}. The red, green, and blue dots represent the density function of $X^{\rm resc}_n(t)$ with $t \in \{200, 600, 1000\}$. The black solid line is given by $(s, 2F'_{\rm GOE}(2s))$. The number of trials is $10^6$.
		}
		\label{ne}
	\end{figure}
	
We use the same notation as in \eqref{e36}, that is,
	\begin{equation}
		\widehat{\E}\left(X^{\rm resc}_N(t)\right) = \left|\overline{\E}(X^{\rm resc}_N(t)) - \E\left(X_{{\rm GOE}}\right)\right|
	\end{equation}
	and also for other statistics. To illustrate the convergence, we show in Figure~\ref{asepscoristatisticlog} the log-log plots of these statistics as a function of time. The reference line for the expectation has slope $-1/3$, while for the other three statistics the slope is $-2/3$. Thus the speed of convergence of the expectation of $X^{\rm resc}_N(t)$ is $\mathcal{O}(t^{-1/3})$, whereas for the other three statistics is $\mathcal{O}(t^{-2/3})$.
	\begin{figure}[h]
		\centering
		\begin{subfigure}
			\centering
			\includegraphics[height=4cm]{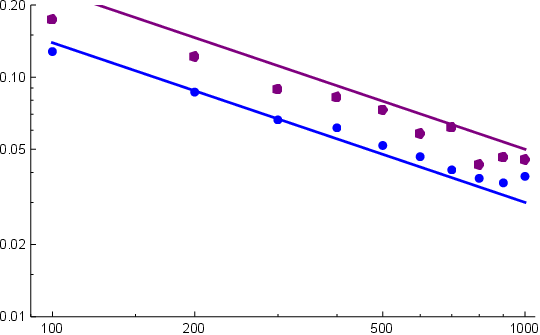}
		\end{subfigure}
		\begin{subfigure}
			\centering
			\includegraphics[height=4cm]{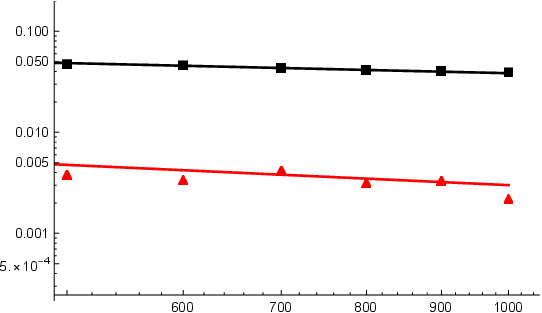}
		\end{subfigure}
		\caption{Log-log plot for the absolute difference between the statistics of $X^{\rm resc}_N(t)$ and $X_{{\rm GOE}}$.			
			Left panel: The pentagon represents $\widehat{\mathrm{Kurt}}(X^{\rm resc}_N(t))$ and the circle represents $\widehat{\mathrm{Skew}}(X^{\rm resc}_N(t))$. The purple solid line is the reference line $(t, 5t^{-2/3})$, and the blue line is $(t, 3t^{-2/3})$.			
			Right panel: The rectangle represents $\widehat{\mathrm{Var}}(X^{\rm resc}_N(t))$ and the triangle represents $\widehat{\E}(X^{\rm resc}_N(t))$. The black solid line is the reference line $(t, 5t^{-1/3}/13)$, and the red line is $(t, 3t^{-2/3}/10)$.		
			The number of trials is $10^6$.
		}\label{asepscoristatisticlog}
	\end{figure}

	\begin{rem}\label{fub}
In the unscaled variables, the mean has a shift of order one, which is typical for models in the KPT universality class, see~\cite{TS10,TSSS11} for real experimental results, and \cite{FF11} for theoretical results. In our case, since the slope of the black line in Figure~\ref{asepscoristatisticlog} is about $0.38(5)$, so we define
		\begin{equation}\label{hatx}
			\hat X^{\rm resc}_N(t) = \frac{X_N^{{\newasep}}(t) - X^{\newasep}_N(0) - 2J(\beta,E)t + 0.385}{-2\Gamma(\beta,E)^{1/3} t^{1/3}}.
		\end{equation}
Then one observes better convergence behavior since now also the expectation converges at speed $t^{-2/3}$, see Figure~\ref{finite} compared to Figure~\ref{ne}.
		\end{rem}
		\begin{figure}[h]
			\centering
			\includegraphics[height=5cm]{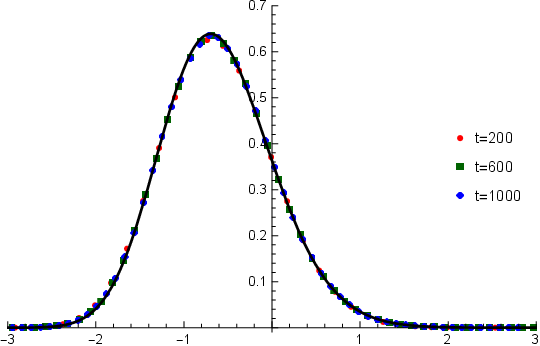}
			\caption{The red, green, and blue dots represent the density function of $\widehat X^{\rm resc}_n(t)$ with $t \in \{200, 600, 1000\}$. The black solid line is given by $(s, 2F'_{\rm GOE}(2s))$. The number of trials is $10^6$.
			}\label{finite}
		\end{figure}

	\subsubsection{End-point of backwards geodesic}\label{asepscbkp}

By the construction of the backwards geodesic on particles, even for density $\rho=1/2$, the law of the starting point is not symmetric unlike the law of $\hat u$. However as we numerically see, this asymmetry asymptotically will disappear. The asymmetry would not be present for $\rho=1/2$ if one would have considered the backwards geodesic for height functions as for example in~\cite{BF20}.

In Figure~\ref{neright}, we present a comparison of the empirical density function of $B^{{\rm ASEPsc}}_t$ defined in~\eqref{basepsc} at times $t = 200$, $t = 600$, and $t = 1000$ with the density function of $\hat u$. From the probability density plot, we observe that as time increases, the density function of $B^{{\rm ASEPsc}}_t$ slowly approaches that of $\hat u$. As the empirical mean is converging quite slowly (see below) we also plot the density function of the centered version of $B^{{\rm ASEPsc}}_t$, that is,
	\begin{equation}
		\hat B^{{\rm ASEPsc}}_t = B^{{\rm ASEPsc}}_t - \overline{\E}\left(B^{{\rm ASEPsc}}_t\right).
	\end{equation}
In the right panel of Figure~\ref{neright} one better sees the convergence trend over time.
			\begin{figure}[h]
		\centering
		\begin{subfigure}
			\centering
			\includegraphics[width=0.45\textwidth]{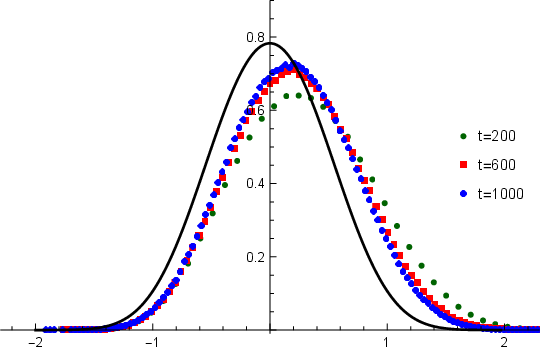}
		\end{subfigure}
		\begin{subfigure}
			\centering
			\includegraphics[width=0.45\textwidth]{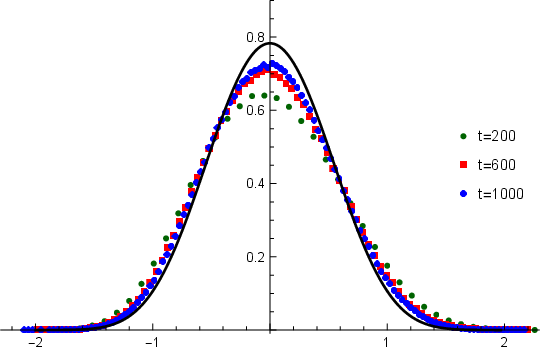}
		\end{subfigure}
		\caption{Numerics for Conjecture~\ref{conj}. 			
			Left panel: The green, red, and blue dots represent the density function of $B_t^{{\rm ASEPsc}}$ with $t \in \{200, 600, 1000\}$.			
			Right panel: The green, red, and blue dots represent the density function of $\hat B_t^{{\rm ASEPsc}}$ with $t \in \{200, 600, 1000\}$. The black solid line is the density function of $\hat u$.			
			The number of trials is $10^6$.
		}\label{neright}
	\end{figure}

For \newasep\ the convergence is slower than for ASEP and thus it is even more important to look at the convergence of the statistics. Recall the notations as in \eqref{e36},
$		\widehat{\E}\left(B^{{\rm ASEPsc}}_t\right) = \left|\overline{\E}(B^{{\rm ASEPsc}}_t) - \E\left(\hat u\right)\right|$ and similarly for the other statistics. In Figure~\ref{AsepscEndpointStatisticloga}, we plot the log-log plots of the first four statistics as compared with the ones of $\hat u$. We see that the expectation of $B^{{\rm ASEPsc}}_t$ converges as $t^{-1/3}$, while the other three statistics converges faster, namely as $t^{-2/3}$.

		\begin{figure}[h]
		\centering
		\includegraphics[height=4.5cm]{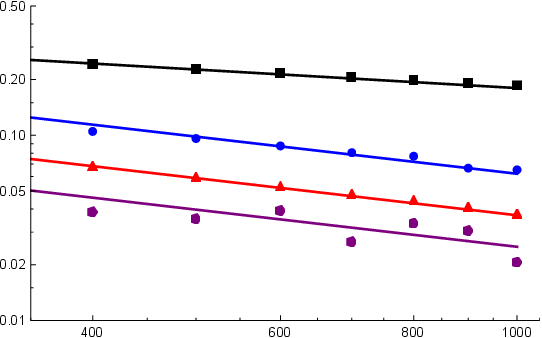}
		\caption{Log-log plot for the statistic of $B^{{\rm ASEPsc}}_t$. The rectangle, triangle, circle, and pentagon represent the numerical values of $\widehat{\E}(B^{{\rm ASEPsc}}_t)$, $\widehat{\mathrm{Var}}(B^{{\rm ASEPsc}}_t)$, $\widehat{\mathrm{Skew}}(B^{{\rm ASEPsc}}_t)$, and $\widehat{\mathrm{Kurt}}(B^{{\rm ASEPsc}}_t)$, respectively. The black, blue, red, and purple solid lines are the reference lines $(t, 1.8 t^{-1/3})$, $(t, 6.2 t^{-2/3})$, $(t, 3.7 t^{-2/3})$, and $(t, 2.5 t^{-2/3})$, respectively. The number of trials is $10^6$.
		}\label{AsepscEndpointStatisticloga}
	\end{figure}
	
	\appendix
	\section{Numerical implementation}
	\subsection{Implementation for $\argmax_{u\in\R} \{{\cal A}_2(u)-u^2\}$}
	The implementation of distribution of $\hat u=\argmax \{{\cal A}_2(u)-u^2\}$ is based on the formula in\footnote{Another formula is obtained in \cite{SCH12}, and it was shown in \cite{BKS12} that they are the same.} \cite{MQR13}. Let ${\rm Ai}$ be Airy function and operator $P_0$ as the projection on positive real line, i.e., $P_0f(x)=\Id_{x>0}f(x)$. For $t,m\in\R$, define the functions 	\begin{equation}
		\begin{aligned}
			B_m(x,y)&=\mathrm{Ai}(x+y+m),\\
			\psi_{t, m}(x)&=2 e^{x t}\left[t \Ai\left(x+m+t^2\right)+\Ai^{\prime}\left(x+m+t^2\right)\right]
		\end{aligned}
	\end{equation}	
	and the kernel
	\begin{equation}
		\Psi_{t, m}(x, y)=2^{1 / 3} \psi_{t, m}\left(2^{1 / 3} x\right) \psi_{-t, m}\left(2^{1 / 3} y\right).
	\end{equation}		
	Define also $\mathcal M=\max_{u\in\R}\{{\cal A}_2(u)-u^2\}$, the joint density of $(\hat u,\mathcal M)$ is~\cite[Theorem~2]{MQR13}:
	\begin{equation}\label{jointdistribution}
		\begin{aligned}
			f(t, m) =\det\left(\Id-P_0 B_{4^{1 / 3}} P_0+P_0 \Psi_{t, m} P_0\right)-F_{\mathrm{GOE}}\left(4^{1 / 3} m\right),
		\end{aligned}
	\end{equation}
	where $F_{\mathrm{GOE}}$ is GOE Tracy-Widom distribution. Both quantities on the right hand side of \eqref{jointdistribution} can be obtained numerically via Bornemann's method \cite{Born08}. Integrating over $\mathcal M$, one obtains the distribution of $\hat u$.
	\subsection{Implementation for ASEP}
	 Denote the total simulation time as $t_{\max }$ and the number of particles as $N_{\max}$. In the implementation we set $t_{\max}=5000$ and $N_{\max}=3750$ and the index of particle from where we construct backwards geodesic is given by $N=937$. The total number of trials is $10^6$. For a fixed realization of $X(t_{\max})$, we will record $N(t_i\downarrow0)$ with $t_i\in\{500,1000,\ldots,5000\}$.
	 \subsubsection{For original process}
	 The initial condition is\footnote{In the simulation we used a left-to-right ordering unlike formulas in the theoretical part of this paper.} $X_n(0)=2n-1$ for $n\in\{1,\ldots,N\}$. We will first prepare three random matrices $\mathcal J,\mathcal U,\mathcal D\in\mathcal R_{\geq 0}^{N\times C_Nt_{\max}}$ for some $C_N>0$ depending on $N$ in the following way:
	\begin{enumerate}
		\item $\mathcal J_{i,j}\sim {\rm Exp}(1)$ are i.i.d.\ random variables. Column $n$ of this matrix provides the information when particle $X_n$ will attempt to jump: the $m-$the attempted jump time of particle $X_n$ is $\sum_{i=1}^{m}\mathcal J_{i,n}$.
		\item $\mathcal U_{i,j}\sim {\rm Unif}([0,1])$ are i.i.d.\ random variables and
		\begin{equation}
			\mathcal D_{i,j}=\begin{cases}
				1,\quad&\mathrm{if}\ \mathcal U_{i,j}\leq p,\\
				-1,&\mathrm{otherwise}.
			\end{cases}
		\end{equation}
		The  $n-$th column of $\mathcal D$ tells us in which direction particle $X_{n}$ should attempt to jump: if $\mathcal D_{m,n}=1$, then the dirction of $m-$th attempted jump of particle $X_n$ is from left to right otherwise from right to left.
	\end{enumerate}
	In the simulation, we will choose $C_N$ (depending on $N$) large enough such that $\min_n\{\sum_{i=1}^{C_Nt_{\max}}\mathcal J_{i,n}\}\geq t_{\max}$ with high probability. For a given initial condition $X(0)$ and matrices $\mathcal J,\mathcal D$, the final state $X(t)$ is a deterministic result. We record current particle's time-space position in two vectors $\mathcal S,\mathcal T\in\mathcal R_{\geq 0}^{N+1}$ with $\mathcal T_1=\mathcal T_{N+1}=t_{\max}$ and $\mathcal S_{N+1}=\infty$. The information about next attempted jump direction is stored in vector ${\cal A}\in\{-1,1\}^{N}$.
	The updating rule is as follows:
	\begin{enumerate}
		\item \textbf{Inital step:} Set $\mathcal S_{n}=X_n(0)$, $\mathcal T_{n}=\mathcal J_{1,n}$, ${\cal A}_n=\mathcal D_{1,n}$ and $\mathcal H_n=2$ for all $n\in\{1,\ldots, N\}$.
		\item \textbf{Induction step:} For given $\mathcal S,\ \mathcal T,\ {\cal A}$ and $\mathcal H$, until $\min\{\mathcal T_n\}\geq t_{\max}$, we choose $m\in\{2,\ldots,N\}$ such that $\mathcal T_{m}\leq \min\{\mathcal T_{m+1},\mathcal T_{m-1}\}$ and update
		\begin{equation}
			\mathcal S_{m}=\begin{cases}
				\mathcal S_{m}+1,\quad&\mathrm{if}\ {\cal A}_{m}=1\ \mathrm{and}\ \mathcal S_{m+1}\neq \mathcal S_{m}+1,\\
				\mathcal S_{m}-1,\quad&\mathrm{if}\ \mathcal D_{m}=-1\ \mathrm{and}\ \mathcal S_{m-1}\neq \mathcal S_{m}-1,\\
				\mathcal S_{m},&\mathrm{otherwise.}
			\end{cases}
		\end{equation}
		We also update its current time position as $\mathcal T_m=\mathcal T_m+\mathcal J_{\mathcal H_m,m}$. In the case when ${\cal A}_{m}=1$ and $\mathcal S_{m+1}= \mathcal S_{m}+1$ (resp.\ ${\cal A}_{m}=-1$ and $\mathcal S_{m-1}= \mathcal S_{m}-1$), this jump will be recorded as suppressed left to right jump (resp.\ right to left jump). Updated ${\cal A}_m=\mathcal D_{\mathcal H_m,m}$ and $\mathcal H_m=\mathcal H_m+1$.
	\end{enumerate}
	\subsubsection{For backwards geodesic}
	
	Along the construction of ASEP, the information about the suppressed left to right (resp.\ right to left) jumps will be stored in matrix $\mathcal T^{\not\rightarrow}$ (resp.\ $\mathcal T^{\not\leftarrow}$): $\mathcal T_{i,n}^{\not\rightarrow}$ is the time of $i-$th suppressed left to right jump of particle $X_n$. For a given time $t\in[0,t_{\max}]$ and $n\in\{1,\ldots, N\}$, the end-point of backwards geodesic of particle $X_n$ is obtained as follows:
	\begin{enumerate}
		\item \textbf{Initial step:} At time $t$, we set $N(t\downarrow t)=n$.
		\item \textbf{Induction step:} Given $m=N(t\downarrow s)$, find $\tau=\max\{r\leq s| r\in \mathcal T^{\not\rightarrow}_{m}\cup\mathcal T^{\not\leftarrow}_m\}$. If $\tau\in\mathcal T_m^{\not\rightarrow}$ (resp.\ $\tau\in \mathcal T^{\not\leftarrow}$), then update $N(t\downarrow\tau)=m+1$ (resp.\ $N(t\downarrow\tau)=m-1$).
	\end{enumerate}

	\subsubsection{For discrepancy in ASEP}\label{discrepancyasep}
	For a given $t\in[0,t_{\max}]$ and $n\in\{1,\ldots,N\}$, after obtaining $N(t\downarrow 0)$ described as above, we will implement $D_n(t)$ defined in \eqref{dnt1} as follows:
	\begin{enumerate}
		\item Consider a new ASEP $\hat X(t)$ with initial condition given by $\hat X_j(0)=X_{N(t\downarrow 0)}(0)-(N(t\downarrow 0)-j)$ for all $j\leq N(t\downarrow 0)$.
		\item The time evolution follows the same realization of random matrices $\mathcal J$ and $\mathcal D$ as the one for original process $X(t)$: for $n\leq N(t\downarrow 0)$, $\mathcal J_{i,j}$ (resp.\ $\mathcal D_{i,j}$) is the $i-$th attempted jump time  (resp.\ the direction of $i-$th attempted jump) of particle $\hat X_j$.
	\end{enumerate}
	Then $D_n(t)$ in \eqref{dnt1} is given by $\hat X_n(t)-X_n(t)$. Due to the high time consuming of the simulation, here we choose $t_{\max}=2000$ and $N_{\max}=1500$. The total number of trials is $10^6$. The index of particle from where we constructed backwards geodesic is given $N=375$. For each realization of $X(t)$, we  construct $\hat X(t_i)$ with $t_i\in\{200,400,\ldots,2000\}$.

	\subsection{Implementation for speed changed}\label{resultasepsc}
	
	Due to the absence of homogeneity of jump rate, the implementation of \newasep\ is slightly different from the one for ASEP, namely, it does not make sense to prepare matrices $\mathcal J,\mathcal D$ once for all time as we did for ASEP. The jump rates illustrated in Figure~\ref{jumprate} do not include the situations of suppressed jumps. In the same spirit of ASEP, also for \newasep\ we assign jump trials from $j$ to $j+1$ (resp.\ from $j+1$ to $j$) as in Figure~\ref{jumprate} without at first caring whether the arrival site is occupied or empty. If the arrival site for right jumps at $j+1$ (resp.\ left jumps at $j$) is occupied, the it will be a suppressed jump.

	For given jump rate, we then determine $X(t)$ as follows:
	\begin{enumerate}
		\item \textbf{Initial step:}
		\begin{enumerate}
			\item For space-time trajectory: set $\mathcal S_n=2n-1$ and $\mathcal T_n=0$ for all $n$.
			\item For jump schema: For each $n\in\{1,\ldots, N\},$ we find its left (resp.\ right) jump rate $\theta^n_\ell$ (resp.\ $\theta^n_r$) according to its local environment.
			Define $\mathcal R_n=\theta_\ell^n+\theta_r^n$.
			\item For probability being chosen: Let $\mathcal P_n=(\sum_{i=1}^n\mathcal R_i)/(\sum_{i=1}^N\mathcal R_i)$ for all $n\geq 1$ and $\mathcal P_0=0$.
		\end{enumerate}
		\item \textbf{Induction step:} For given $\mathcal S,\ \mathcal T,\ \mathcal R$ and $\mathcal P$, until $\min_n\{\mathcal T_n\}\geq t_{\max}$, we let $\mathcal U\sim {\rm Unif}([0,1])$ and find $m\in\{1,\ldots,N\}$ such that $\mathcal P_{m-1}<\mathcal U\leq \mathcal P_m$. Then we update the $\mathcal S,\ \mathcal T,\ \mathcal R$ and $\mathcal P$ as follows:
		\begin{enumerate}
			\item \textbf{For time position:} Set $\mathcal T_n=\mathcal T_n+\mathcal J$, where $\mathcal J\sim {\rm Exp}(\mathcal R_m)$ for all $n$.
			\item \textbf{For space position:} Let $\mathcal K\sim Unif([0,1])$ be i.i.d.\ and set
			\begin{equation}
				\mathcal S_m=\begin{cases}
					\mathcal S_m+1,\quad&\mathrm{if}\ \mathcal K_m\leq \frac{\theta_r^m}{\theta_\ell^m+\theta_r^m}\ \mathrm{and}\ \mathcal S_{n+1}>\mathcal S_m+1,\\
					\mathcal S_m-1,&\mathrm{if}\ \mathcal K_m> \frac{\theta_r^m}{\theta_\ell^m+\theta_r^m}\ \mathrm{and}\ \mathcal S_{n}>\mathcal S_{n-1}+1,\\
					\mathcal S_m,&\mathrm{otherwise.}
				\end{cases}
			\end{equation}
			In the case when $\mathcal K_m\leq \frac{\theta^m_r}{\theta^m_\ell+\theta^m_r}$ and $\mathcal S_{n+1}=\mathcal S_m+1$ (resp.\ $\mathcal K_m> \frac{\theta_r^m}{\theta_\ell^m+\theta_r^m}$ and $\mathcal S_{n}=\mathcal S_{n-1}+1$), we mark this attempted jump as suppressed right (resp.\ left) jump.
			\item Updated $\mathcal P$ and $\mathcal R$ according to current status of $\mathcal S$.% and Figure~\ref{SimulationASEPscJumpRate}.
		\end{enumerate}
	\end{enumerate}
	After constructing the process $X(t)$, the end-point of backwards geodesic can be constructed as same as the one for ASEP. Due to the extra time needed to determine the jump rate, we choose $t_{\max}=1000$ and $N_{\max}=1000$,  The index of target particle, from where we construct backwards geodesic, is given as $N=187$.
	
	%\appendix	

\end{document}